\newcommand{\C}{\mathbb{C}}
\newcommand{\ZZ}{\mathbb{Z}}
\newcommand{\QQ}{\mathbb{Q}}
\newcommand{\NN}{\mathbb{N}}
\newcommand{\PP}{\mathbb{P}}
\newcommand{\A}{\mathbb{A}}
\newcommand{\OO}{\mathcal O}
\newcommand{\Ss}{\mathcal S}
\newcommand{\DD}{\mathcal D}
\newcommand{\YY}{\mathcal Y}
\newcommand{\VV}{\mathcal V}
\newcommand{\Z}{\mathcal Z}
\newcommand{\EE}{\mathcal E}
\newcommand{\MM}{\mathcal M}
\newcommand{\gr}{\hbox{Gr}}
\newcommand{\wt}{\widetilde}
\newcommand{\ima}{\hbox{Im}}
\newcommand{\rom}{\romannumeral}
\newtheorem{theorem}{Theorem}[section]
\newtheorem{lemma}[theorem]{Lemma}
\newtheorem{corollary}[theorem]{Corollary}
\newtheorem{proposition}[theorem]{Proposition}
\newtheorem{conjecture}[theorem]{Conjecture}
\newtheorem{remark}[theorem]{Remark}
\newtheorem{definition}[theorem]{Definition}
\newtheorem{convention}{Conventions}
\newtheorem{nonumbering}{Theorem}
\newtheorem{nonumberingp}{Proposition}
\newtheorem{nonumberingt}{Acknowledgements}
\begin{document}
\author[Robert Laterveer]
{Robert Laterveer}

\address{Institut de Recherche Math\'ematique Avanc\'ee, Universit\'e 
de Strasbourg, 7 Rue Ren\'e Des\-car\-tes, 67084 Strasbourg CEDEX, France.}
\email{robert.laterveer@math.unistra.fr}

\title{Algebraic cycles and Todorov surfaces}

\begin{abstract} Motivated by the Bloch--Beilinson conjectures, Voisin has formulated a conjecture about $0$--cycles on self--products of surfaces of geometric genus one. We verify Voisin's conjecture for the family of Todorov surfaces with $K^2=2$ and fundamental group $\ZZ/2\ZZ$. As a by--product, we prove that certain Todorov surfaces have finite--dimensional motive.
\end{abstract}

\keywords{Algebraic cycles, Chow groups, motives, finite--dimensional motives, surfaces of general type, Todorov surfaces, $K3$ surfaces.}

\subjclass[2010]{14C15, 14C25, 14C30. 14J28, 14J29.}

\maketitle

\section{Intro}

The Bloch--Beilinson conjectures have been hugely influential in making concrete predictions concerning the behaviour of Chow groups with $\QQ$--coefficients $A^\ast()_{\QQ}$ of smooth projective varieties over $\C$ (this is explained, for example, in \cite{Vo}, \cite{MNP}, \cite{J2}).
One of these concrete predictions is the following intriguing conjecture about $0$--cycles on self--products of surfaces with geometric genus one:

\begin{conjecture}[Voisin \cite{V9}]\label{conj} Let $S$ be a smooth complex projective surface with $h^{0,2}(S)=1$ and $q(S)=0$. Let $a,a^\prime\in A^2_{hom}(S)$ be two $0$--cycles of degree $0$. Then
  \[ a\times a^\prime= a^\prime\times a\ \ \ \hbox{in}\ A^4(S\times S)\ .\]
   (The notation $a\times a^\prime$ is a short--hand for the cycle class $(p_1)^\ast (a)\cdot (p_2)^\ast(a^\prime)\in A^{4}(S\times S)$, where $p_1, p_2$ denote projection on the first, resp. second factor.)  
 \end{conjecture}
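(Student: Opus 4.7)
The plan is to reformulate Voisin's conjecture motivically and reduce it to a vanishing statement for the transcendental motive of $S$. Working in the category of rational Chow motives, I would first assume a Chow--K\"unneth decomposition $h(S) = \bigoplus_{i=0}^{4} h_i(S)$ together with a further refinement $h_2(S) = h_2^{alg}(S) \oplus t_2(S)$ into algebraic and transcendental summands. Since $q(S)=0$, the components $h_1(S)$ and $h_3(S)$ vanish, and the projector onto $t_2(S)$ acts as the identity on $A^2_{hom}(S)_{\QQ}$. Consequently, for $a,a' \in A^2_{hom}(S)_{\QQ}$, the class $a\times a'$ lies in $A^4\bigl(t_2(S)\otimes t_2(S)\bigr)_{\QQ}$, and the antisymmetric combination $a\times a' - a'\times a$ lives in $A^4\bigl(\wedge^2 t_2(S)\bigr)_{\QQ}$, where $\wedge^2$ denotes the $(-1)$-eigenspace under the transposition $\tau$ of the two factors.

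The conjecture is thus equivalent to the vanishing $A^4\bigl(\wedge^2 t_2(S)\bigr)_{\QQ}=0$. Its cohomological counterpart is transparent: the realization of $\wedge^2 t_2(S)$ is a sub-Hodge structure of $\wedge^2 H^2_{tr}(S,\QQ)$, whose bidegree-$(4,0)$ and $(0,4)$ components are $\wedge^2 H^{2,0}(S)$ and $\wedge^2 H^{0,2}(S)$, both of which vanish because $h^{2,0}=h^{0,2}=1$. Under the Bloch--Beilinson philosophy, $A^4_{hom}$ of a motive of weight $4$ is governed by its $(0,4)$-Hodge component, so this cohomological vanishing is precisely the expected shadow of the desired Chow-theoretic identity.

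The genuinely hard step is to promote this cohomological vanishing to Chow-theoretic vanishing. The natural tool is Kimura--O'Sullivan finite-dimensionality for $h(S)$, combined with a geometric construction relating $t_2(S)$ to a more accessible motive. Here lies the main obstacle: finite-dimensionality is open for an arbitrary surface with $h^{0,2}=1$ and $q=0$, and even granting it, a Kimura-type vanishing of antisymmetric Chow groups for an evenly finite-dimensional motive does not follow formally from the vanishing of its $(0,4)$-component. In practice, progress on the conjecture requires concrete auxiliary input --- for $K3$ surfaces one exploits elliptic fibrations (Voisin), for certain Godeaux or Catanese surfaces one uses explicit quotient presentations, and for Todorov surfaces one transfers the problem to the associated $K3$ double cover, as the present paper carries out. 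A uniform argument valid for every surface in the generality $h^{0,2}=1$, $q=0$ therefore appears to be out of reach with current techniques, and the best one can realistically propose is a case-by-case strategy: (i) construct a geometric map or correspondence linking $t_2(S)$ to a surface for which the conjecture is already known, (ii) verify finite-dimensionality for $h(S)$ by this link, and (iii) conclude via a standard Bloch-style spread argument.
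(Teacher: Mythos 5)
The statement you are addressing is Voisin's Conjecture \ref{conj}, which the paper does \emph{not} prove (nor claims to): it is an open conjecture stated for context, and the paper's contribution is to verify it for one irreducible family (Todorov surfaces with $K^2_S=2$, $\pi_1(S)=\ZZ/2\ZZ$, Corollary \ref{true}). There is therefore no paper proof to compare against, and your ``proposal'' correctly recognizes this: you do not claim a proof, you explain why the general case is out of reach, and you outline the strategy that works in special cases. That calibration is exactly right.

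A few remarks on accuracy. Your motivic reformulation is correct: for a surface with $q=0$, $\pi_1=\pi_3=0$ can be arranged, $\pi_2^{tr}$ acts as the identity on $A^2_{hom}(S)_\QQ$ (since $A^2_{hom}=A^2_{AJ}$ here), and the conjecture is equivalent to $A^4_{hom}(\wedge^2 t_2(S))_\QQ=0$. The cohomological evidence you cite — vanishing of $\wedge^2 H^{2,0}$ and $\wedge^2 H^{0,4}$ when $p_g=1$ — is the Bloch--Beilinson heuristic underlying the conjecture. And your caution about Kimura finite-dimensionality is well placed: Kimura--O'Sullivan gives $\wedge^{N}t_2(S)=0$ for $N$ large, but not the vanishing of $A^4_{hom}(\wedge^2 t_2(S))_\QQ$; even for $K3$ surfaces with finite-dimensional motive this conjecture does not follow formally. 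Your three-step plan (find a correspondence identifying $t_2(S)$ with $t_2$ of a surface where the conjecture is known, verify finite-dimensionality through that link, conclude) is precisely what the paper executes for the Todorov family: Theorem \ref{main2} gives $t_2(S)\cong t_2(P)$ for the associated $K3$ surface $P$, Rito's Theorem \ref{rito} identifies $P$ as a double plane branched along two cubics, and Voisin's result for such $K3$s finishes the argument. So while there is no proof here (and cannot be), your reading of what the paper achieves and how is accurate.

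One small point to tighten if you develop this: the antisymmetrization $a\times a'-a'\times a$ lands in the $(-1)$-eigenspace of the swap $\tau$, and for a motive of even weight like $t_2(S)$ the standard convention indeed calls this eigenspace $\wedge^2 t_2(S)$; but it is worth flagging the sign conventions explicitly, since for odd-weight motives the roles of $\mathrm{Sym}^2$ and $\wedge^2$ reverse and readers are frequently tripped up by this.
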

  
Conjecture \ref{conj} has been verified in certain cases \cite{V9}, \cite{thoughts}, but is still wide open for a general $K3$ surface.\footnote{More precisely: I am not aware of a single $K3$ surface with Picard number $<9$ for which Conjecture \ref{conj} is known.}  

The principal aim of this note is to add some new items to the list of examples of surfaces for which Conjecture \ref{conj} is verified. The main result is as follows:

\begin{nonumbering}[=Corollary \ref{true}] Let $S$ be a Todorov surface with $K^2_S=2$ and $\pi_1(S)=\ZZ/ 2\ZZ$. Then Conjecture \ref{conj} is true for $S$.
\end{nonumbering}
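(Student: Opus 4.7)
The strategy is to reduce Voisin's conjecture for $S$ to the corresponding conjecture for an associated $K3$ surface, exploiting the classical correspondence between Todorov surfaces and $K3$s. A Todorov surface $S$ with $K^2_S=2$ and $\pi_1(S)=\ZZ/2\ZZ$ comes equipped with a canonical involution $\iota$ on the étale double cover $\widetilde S\to S$ determined by the fundamental group; after resolving the quotient singularities of $\widetilde S/\iota$ one obtains a $K3$ surface $X$ (Morrison's construction). This surface $X$ will serve as the geometric bridge for the whole proof.

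The first step is to establish Kimura finite-dimensionality of the motive of $S$. Since $\widetilde S$ dominates $X$, and $X$ belongs to a family of $K3$ surfaces with high Picard rank and extra algebraic classes for which finite-dimensionality is already known, one inherits finite-dimensionality for $\widetilde S$, and hence for $S=\widetilde S/\iota$ by averaging over the finite group action. This simultaneously yields the secondary claim in the abstract.

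The second and decisive step is to promote the correspondence between $\widetilde S$, $S$, and $X$ to an isomorphism of rational Chow motives on the transcendental pieces: a correspondence $\Gamma\in A^2((S\times X)_\QQ)$ identifying the ``transcendental summand'' of $S$ with that of $X$. The cohomological statement — that the transcendental lattices of $S$ and $X$ coincide as polarized Hodge structures — is classical for Todorov surfaces. Lifting it to an isomorphism of Chow motives uses finite-dimensionality (which guarantees the existence of an inverse correspondence once the induced map on cohomology is an isomorphism), together with the Chow--Künneth decomposition, which is available for surfaces with $q=0$ and $h^{0,2}=1$.

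Once this motivic isomorphism is in hand, Voisin's conjecture for $S$ translates to the analogous statement on $X\times X$: the class $a\times a'-a'\times a\in A^4(S\times S)$ depends only on the symmetric square of the transcendental motive, and so vanishes for $S$ if and only if its counterpart vanishes for $X$. Since the $K3$ surfaces arising in this way carry many additional algebraic classes coming from the branch locus and the resolution, they lie in families for which Conjecture~\ref{conj} has been verified in \cite{V9} and \cite{thoughts}, completing the argument. The main obstacle I expect is the second step: concretely exhibiting a correspondence between $S$ and $X$ whose action on the transcendental part is a genuine isomorphism (and not merely an isogeny), and checking that the finite-dimensionality machinery applies cleanly in the presence of the fixed locus of $\iota$ and the singularities of $\widetilde S/\iota$.
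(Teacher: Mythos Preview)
Your proposal has a genuine circularity problem. You want to first establish Kimura finite--dimensionality of $S$ (and of the associated $K3$ surface $X$), and then use finite--dimensionality to promote the cohomological isomorphism $H^2_{tr}(S)\cong H^2_{tr}(X)$ to an isomorphism of Chow motives. But finite--dimensionality of the generic $K3$ surface arising here is \emph{not} known: Rito's theorem only tells you $X$ is a double plane branched along two cubics, which is enough to verify Voisin's conjecture for $X$ (this is in \cite{V9}) but does not force Picard number $\ge 19$ or any of the other conditions under which finite--dimensionality of a $K3$ is currently available. Moreover, your inheritance argument runs in the wrong direction: if $\widetilde S$ dominates $X$, then finite--dimensionality of $\widetilde S$ implies that of $X$, not conversely. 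In the paper, finite--dimensionality of $S$ is a \emph{consequence} of the main theorem (and only under extra hypotheses, Corollary~\ref{finite}), not an input. There is also a mix--up in the geometry: the Todorov involution $\iota$ acts on $S$ itself, with $S/\iota$ birational to the $K3$; the \'etale double cover coming from $\pi_1(S)=\ZZ/2\ZZ$ is a separate object (the weighted complete intersection $V$), and $V/\text{(deck involution)}=S$, not a $K3$.

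The paper avoids finite--dimensionality entirely and instead proves the isomorphism $A^2_{hom}(S)_{\QQ}\cong A^2_{hom}(P)_{\QQ}$ directly, by a family argument. The Catanese--Debarre description realizes all canonical models of these Todorov surfaces as fibres of an explicit family $\Ss\to B$ of quotients of weighted complete intersections in $\PP(1^3,2^2)$. One then forms the relative correspondence $2\Delta_{\Ss}-{}^t\Gamma_f\circ\Gamma_f$ (where $f$ is the quotient map to the family of singular $K3$'s), observes that it acts trivially on $H^{0,2}$ fibrewise, and uses Voisin's ``spread'' technique to conclude that it is fibrewise rationally equivalent to something supported on divisors --- \emph{provided} one knows $A^2_{hom}(\Ss\times_B\Ss)_{\QQ}=0$. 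This last vanishing (Proposition~\ref{key}) is the technical heart, proved by compactifying to an incidence variety over $\widetilde{\PP\times\PP}$ and exhibiting an explicit stratification into pieces that are iterated projective bundles over (complements of) linear spaces. None of this requires, or yields, finite--dimensionality for the general member of the family.
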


A Todorov surface (cf. Definition \ref{tod} below for a precise definition) is a certain surface of general type, for which the bicanonical map factors over a $K3$ surface; these surfaces have been intensively studied with the aim of providing counterexamples to local and global Torelli \cite{Kunev}, \cite{Tod2}, \cite{Mor3}, \cite{U}, \cite{U2}. There exist $11$ irreducible families of Todorov surfaces \cite{Mor3}. Todorov surfaces with invariants $K^2_S=2$ and $\pi_1(S)=\ZZ/2\ZZ$ form one of these irreducible families, which is of dimension $12$. In \cite{thoughts}, I established the truth of Conjecture \ref{conj} for another irreducible family of Todorov surfaces (those with $K^2_S=1$, which are sometimes called ``Kunev surfaces''); so now there remain $9$ more families to investigate.

Along the way, we obtain some other results that may be of independent interest. For example, the above result is obtained by first showing the following:

\begin{nonumbering}[=Theorem \ref{main2}] Let $S$ be a Todorov surface with $K^2_S=2$ and $\pi_1(S)=\ZZ/ 2\ZZ$, and let $P$ be the $K3$ surface associated to $S$.
There is an isomorphism of Chow motives
  \[  t_2(S)\ \cong\ t_2(P)\ \ \ \hbox{in}\ \MM_{\rm rat}\ \]
  (here $t_2$ denotes the transcendental part of the motive \cite{KMP}).
\end{nonumbering}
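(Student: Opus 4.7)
The plan is to exploit the Todorov involution $\iota \colon S \to S$ whose quotient, after resolving the rational double points introduced at the fixed locus, is the $K3$ surface $P$. Write $\Gamma_\iota \subset S \times S$ for the graph of $\iota$; then the correspondences $\pi^{\pm} := \tfrac{1}{2}(\Delta_S \pm \Gamma_\iota) \in A^2(S \times S)_{\QQ}$ are orthogonal idempotents and commute with the refined Chow--K\"unneth projectors of \cite{KMP}, so they induce a splitting
\[ t_2(S)\;=\;t_2(S)^{+}\oplus t_2(S)^{-}\ \ \ \hbox{in}\ \MM_{\rm rat}.\]

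Step two is to identify the invariant summand with $t_2(P)$. The quotient morphism $S \to S/\iota$ and the resolution $P \to S/\iota$ assemble into a correspondence $\Gamma \in A^2(S \times P)_{\QQ}$ (via a suitable normalization of the fibre product), whose action on cohomology realizes the classical identification $H^2_{\rm tr}(S)^{+} \cong H^2_{\rm tr}(P)$. Since both the resolution of $A_1$--singularities and the blowing up of points only contribute to the Tate/Lefschetz summands of the motive, $\Gamma$ lifts this to an isomorphism $t_2(S)^{+} \cong t_2(P)$ in $\MM_{\rm rat}$.

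Step three is to prove that $t_2(S)^{-} = 0$. At the level of Hodge structures this is straightforward: $p_g(S) = p_g(P) = 1$, the unique (up to scalar) holomorphic $2$--form on $S$ is pulled back from $P$, so $H^{2,0}(S)^{-} = 0$; combined with $q(S) = 0$, the entire Hodge realization of $t_2(S)^{-}$ vanishes. To upgrade this to vanishing of the motive itself, one invokes Kimura finite--dimensionality of $t_2(S)^{-}$ together with the nilpotency theorem: a finite--dimensional Chow motive whose cohomological realization is trivial must be zero.

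The principal obstacle will be establishing the Kimura finite--dimensionality needed in the last step — equivalently, finite--dimensionality of the $K3$ surface $P$. Since this is a famously open problem for a generic $K3$, one must exploit the explicit geometry of this $12$--dimensional family: presumably by presenting $P$ as a double cover of a rational surface branched along a distinguished curve and, perhaps after a further equivariant construction, as being dominated by a Kummer--type or product--of--curves variety whose motive is already known to be finite--dimensional. Once this ingredient is in hand, the two isomorphisms $t_2(S)^{+} \cong t_2(P)$ and $t_2(S)^{-} \cong 0$ combine to yield the desired $t_2(S) \cong t_2(P)$, with finite--dimensionality of $S$ (the by--product advertised in the abstract) falling out as an immediate corollary.
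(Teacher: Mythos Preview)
Your decomposition $t_2(S)=t_2(S)^{+}\oplus t_2(S)^{-}$ and the identification $t_2(S)^{+}\cong t_2(P)$ are fine; this is essentially formal once one knows that resolving rational double points only contributes Tate pieces. The genuine content of the theorem is precisely your Step~3, and here your proposal has a gap that cannot be filled along the lines you suggest.

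You want to kill $t_2(S)^{-}$ by observing that its cohomological realization vanishes and then invoking Kimura nilpotence. But finite--dimensionality of $t_2(S)^{-}$ (equivalently of $t_2(S)$, equivalently of $t_2(P)$) is \emph{not known} for the general member of this $12$--dimensional family. Rito's description of $P$ as a double cover of $\PP^2$ branched along two cubics does not yield a domination by an abelian or product--of--curves variety; it is enough to verify Voisin's conjecture for $P$, but not to establish finite--dimensionality. In the paper, finite--dimensionality of $S$ is obtained only as a \emph{consequence} of the isomorphism $t_2(S)\cong t_2(P)$, and only under the additional hypotheses of Corollary~\ref{finite} (large Picard number, or $P$ Kummer, or $P$ with Shioda--Inose structure). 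So your argument is circular: you are assuming as input exactly the by--product that the theorem is supposed to deliver, and even then only in special cases.

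What the paper actually does to prove $t_2(S)^{-}=0$ (equivalently, that $\iota$ acts trivially on $A^2_{hom}(S)_{\QQ}$) is entirely different and does not use finite--dimensionality at all. One works with the whole family $\Ss\to B$ furnished by the Catanese--Debarre description as quotients of weighted complete intersections, forms the relative correspondence $2\Delta_{\wt{\Ss}}-{}^t\Gamma_{\wt f}\circ\Gamma_{\wt f}$, and uses Voisin's ``spread'' argument: since this class is fibrewise cohomologically supported on divisors, it can be corrected by a global cycle supported on a relative divisor, and then a Leray spectral sequence argument reduces the question to showing
\[ A^2_{hom}(\Ss\times_B\Ss)_{\QQ}=0\ .\]
This last vanishing is the technical heart (Proposition~\ref{key}), proved by compactifying the blown--up relative self--product inside $\bar B\times\wt{\PP\times\PP}$ and stratifying so that every stratum has Totaro's strong property. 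Once one has the resulting rational equivalence on each fibre, the manipulation with the KMP projectors gives $t_2(S)\cong t_2(P)$ directly.
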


This has consequences for the intersection product on $S$ (Corollary \ref{intersection}).
As another consequence of Theorem \ref{main2}, we are able to show (Corollary \ref{finite}) that certain Todorov surfaces have finite--dimensional motive in the sense of Kimura and O'Sullivan \cite{K}, \cite{An}. This provides some new examples of surfaces of general type with finite--dimensional motive. The proof of Theorem \ref{main2} is directly inspired by Voisin's work on the Bloch/Hodge equivalence for complete intersections \cite{V0}, \cite{V1}, reasoning family--wise and using the technique of ``spread'' of algebraic cycles.

\vskip0.6cm

\begin{convention} In this note, the word {\sl variety\/} will refer to a quasi--projective separated scheme of finite type over $\C$, endowed with the Zariski topology. A {\sl subvariety\/} is a (possibly reducible) reduced subscheme which is equidimensional. 

We will denote by $A_j(X)$ the Chow group of $j$--dimensional cycles on $X$;
for $X$ smooth of dimension $n$ the notations $A_j(X)$ and $A^{n-j}(X)$ will be used interchangeably. 
Chow groups with rational coefficients will be denoted
  \[ A_j(X)_{\QQ}:=A_j(X)\otimes_{\ZZ} \QQ\ .\]
The notation $A^j_{hom}(X)$, resp. $A^j_{AJ}(X)$ will be used to indicate the subgroups of homologically trivial, resp. Abel--Jacobi trivial cycles.
For a morphism $f\colon X\to Y$, we will write $\Gamma_f\in A_\ast(X\times Y)$ for the graph of $f$.



In an effort to lighten notation, we will write $H^j(X)$ (or $H_jX$) to indicate singular cohomology $H^j(X,\QQ)$ (resp. Borel--Moore homology $H_j(X,\QQ)$).

\end{convention}

\section{Todorov surfaces}

This preparatory section contains the definition and basic properties of Todorov surfaces. A first result that will be crucial to us is that any Todorov surface has an associated $K3$ surface for which Voisin's conjecture is known to hold (Theorem \ref{rito}; this is work of Rito). A second crucial result is that Todorov surfaces with $K_S^2=2$ and $\pi_1(S)=\ZZ/2\ZZ$ can be described as quotients of certain complete intersections in a weighted projective space (Theorem \ref{cd}; this is work of Catanese--Debarre).

\begin{definition}[\cite{Mor3}, \cite{Tod2}]\label{tod} A {\em Todorov surface\/} is a smooth projective surface $S$ of general type with $p_g(S)=1$, $q=0$, and such that the bicanonical map $\phi_{2K_S}$ factors as
  \[ \phi_{2K_S}\colon\ \  S  \ \xrightarrow{\iota}\ S\ \dashrightarrow\ \PP^r\ ,\]
  where $\iota\colon S\to S$ is an involution for which $S/\iota$ is birational to a $K3$ surface (i.e., there is equality $\phi_{2K_S} \circ \iota =  \phi_{2K_S}$).
  
  The $K3$ surface obtained by resolving the singularities of $S/\iota$ will be called the $K3$ surface {\em associated to $S$\/}.
\end{definition}

\begin{definition}[\cite{Mor3}]\label{fi} The {\em fundamental invariants\/} of a Todorov surface $S$ are $(\alpha,k)$, where $\alpha$ is such that the $2$--torsion subgroup of $\hbox{Pic}(S)$ has order $2^\alpha$, and $k=K_S^2+8$.
\end{definition}

\begin{remark}\label{remtod} Morrison proves \cite[p. 335]{Mor3} there are exactly $11$ non--empty irreducible families of Todorov surfaces, corresponding to the $11$ possible values of the fundamental invariants:
  \[ \begin{split}(\alpha,k)\in\ \ \Bigl\{ (0,9),(0,10),&(0,11),(1,10),(1,11),\\
      &(1,12),(2,12),(2,13),(3,14),(4,15),(5,16)\Bigr\}\ .\\
      \end{split}\]
Examples of surfaces belonging to each of the $11$ families are given in \cite{Tod2}; moreover, it is shown in loc. cit. that these surfaces provide counterexamples to local and global Torelli (cf. \cite{U2} for an overview on Torelli problems, and \cite{U} where a mixed version of Torelli is proposed to remedy this failure).
The family with fundamental invariants $(0,9)$ was first described by Kunev \cite{Kunev}; these surfaces are sometimes called {\em Kunev surfaces\/}.

In \cite{Mor3}, an explicit description is given of the coarse moduli space for each of the $11$ families of Todorov surfaces.

Lee and Polizzi have given an alternative construction of Todorov surfaces, as deformations of product--quotient surfaces \cite[Theorem 4.6 and Remark 4.7]{LP}.
\end{remark}

\begin{remark} The convention $k=K_S^2+8$ in Definition \ref{fi}, which may appear strange at first sight, is explained as follows: the number $k$ happens to be the number of rational double points on a so--called ``distinguished partial desingularization'' of $S/\iota$ (this follows from \cite[Theorem 5.2 (\rom2)]{Mor3}).
\end{remark}

We will make use of the following result:

\begin{theorem}[Rito \cite{Rito}]\label{rito} Let $S$ be a Todorov surface, and let $P$ be the smooth minimal model of $S/\iota$. Then there exists a generically finite degree $2$ cover
  \[ P\ \to\ \PP^2\ ,\]
  ramified along the union of two smooth cubics.
\end{theorem}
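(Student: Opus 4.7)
The plan is to extract the degree two cover $P\to \PP^2$ from a careful analysis of the bicanonical map of $S$ decomposed under the involution. Write
\[
H^0(S,2K_S) = H^0(2K_S)^+ \oplus H^0(2K_S)^-
\]
for the $\iota$-eigenspace decomposition. Since $\phi_{2K_S}\circ \iota = \phi_{2K_S}$, only the invariant part $H^0(2K_S)^+$ contributes to the bicanonical map, and these sections descend to sections of a line bundle $L$ on the minimal resolution $P$ of $S/\iota$. The first step is to show $\dim H^0(2K_S)^+ = 3$, by combining Riemann--Roch with the holomorphic Lefschetz fixed--point formula for $\iota^*$ acting on cohomology, using the description of the fixed locus of $\iota$ as a finite set of points (whose images become the rational double points of $S/\iota$). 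The associated morphism $\varphi\colon P\to \PP^2$ is the candidate cover.

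Next I would check that $\varphi$ is generically $2$ to $1$. Because $S \xrightarrow{2:1} P \xrightarrow{\varphi} \PP^2$ recovers the map defined by $H^0(2K_S)^+$, a degree count against the image of the bicanonical map of $S$ pins down $\deg\varphi = 2$. The canonical bundle formula for double covers then forces $L = \varphi^*\OO_{\PP^2}(3)$, so that $K_P = \varphi^*K_{\PP^2} + \tfrac12 \varphi^*B$ is compatible with $K_P = 0$ only if the branch curve $B\subset \PP^2$ has degree $6$.

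The heart of the matter, and the main obstacle, is showing that $B$ splits as $B = C_1\cup C_2$ with $C_1,C_2$ smooth cubics, as opposed to being irreducible or decomposing differently. My approach would be to exploit an additional symmetry on $S$: the expected Galois group of the composite $S\to \PP^2$ is $(\ZZ/2)^2$, generated by $\iota$ and a second natural involution $\sigma$ (arising, e.g., from Catanese--Debarre's presentation of Todorov surfaces as quotients of complete intersections, or from Morrison's structural description in general). Pardini's theory of abelian covers then decomposes the total branch data into contributions indexed by the three nontrivial characters of $(\ZZ/2)^2$, and the two characters not corresponding to $\iota$ contribute two distinct cubic components of $B_\iota$. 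Smoothness of $C_1, C_2$ would follow from the requirement that $P$ be a K3 surface (so that the singularities of the double cover are at worst rational double points, forcing $B$ to be at worst nodal, and then the $(-2)$--curves on $P$ contracted by $\varphi$ pin down the node configuration). The delicate point is precisely the production and control of the second involution, together with the verification that each cubic is smooth, and this is where one must rely on the detailed geometry specific to the Todorov families rather than a uniform cohomological argument.
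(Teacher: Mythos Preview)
The paper does not give its own proof of this theorem; it is quoted from Rito \cite{Rito} and used as input. So there is nothing in the paper to compare your argument against, and your proposal must stand on its own.

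It does not. The first step is based on two mistaken premises. First, the fixed locus of $\iota$ on $S$ is \emph{not} a finite set of points: since $S/\iota$ has only rational double points and trivial dualizing sheaf, the ramification formula gives $K_S=\pi^\ast K_{S/\iota}+R=R$, so the (unique) canonical curve of $S$ is the fixed curve of $\iota$; the isolated fixed points you mention are there in addition, and they account for the RDPs on the quotient, but the curve is what carries $K_S$. Your holomorphic Lefschetz computation, set up for isolated fixed points only, therefore cannot be correct. Second, the eigenspace decomposition of $H^0(2K_S)$ is trivial: the hypothesis $\phi_{2K_S}\circ\iota=\phi_{2K_S}$ says exactly that $\iota^\ast$ acts on $H^0(2K_S)$ by a global scalar (indeed $\iota^\ast$ acts as $-1$ on the fibre of $K_S$ along the fixed curve, hence as $+1$ on $2K_S$). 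Thus $H^0(2K_S)^{+}=H^0(2K_S)$ has dimension $K_S^2+2$, and the descended map lands in $\PP^{K_S^2+1}$, not $\PP^2$, except in the Kunev case $K_S^2=1$. The degree--$2$ cover $P\to\PP^2$ in Rito's theorem is \emph{not} the descended bicanonical map for the ten remaining families; it has to be built by other means (Rito uses the image on $P$ of the canonical curve and the elliptic pencil it determines).

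Your final idea---that a $(\ZZ/2\ZZ)^2$--structure forces the branch sextic to split as two cubics via Pardini's theory---is the right kind of mechanism, but as you note yourself the second involution is the crux, and invoking Catanese--Debarre produces it only for the family $(1,10)$. The theorem is asserted for all eleven families, so this does not suffice.
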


\begin{remark} For the Todorov surface with fundamental invariants $(0,9)$ (aka a Kunev surface), Theorem \ref{rito} was already proven by Kunev and Todorov \cite{Tod}.
\end{remark}

We now restrict attention to Todorov surfaces $S$ with fundamental invariants $(1,10)$. This means that $K_S^2=2$ and (according to \cite[Theorem 2.11]{CD}) the fundamental group of $S$ is $\ZZ/2\ZZ$. In this case, there happens to be a nice explicit description of $S$ in terms of weighted complete intersections:

\begin{theorem}[Catanese--Debarre \cite{CD}]\label{cd} Let $S$ be a Todorov surface with fundamental invariants $(1,10)$. Then the canonical model of $S$ is the quotient $V/\tau^\prime$, where $V\subset\PP(1^3,2^2)$ is a weighted complete intersection having only rational double points as singularities, given by the equations
  \[ \begin{cases} F=z_3^2+c w^4+w^2 q(x_1,x_2)+Q(x_1,x_2)=0\ ,&\\
                         G=z_4^2+c^\prime w^4+w^2 q^\prime(x_1,x_2)+ Q^\prime(x_1,x_2)=0\ .&\\
                       \end{cases}  \]
    Here $[w:x_1:x_2:z_3:z_4]$ are coordinates for $\PP:=\PP(1^3,2^2)$, and $q,q^\prime$ are quadratic forms, $Q,Q^\prime$ are quartic forms without common factor, and $c,c^\prime$ are constants not both $0$. The involution $\tau^\prime\colon \PP\to \PP$ is defined as 
      \[ [w:x_1:x_2:z_3:z_4]\ \mapsto\ [-w:x_1:x_2:z_3:z_4]\ .\]
  Conversely, given a weighted complete intersection $V\subset\PP$ as above, the quotient $V/\tau^\prime$ is the canonical model of a Todorov surface with fundamental invariants $(1,10)$.    
 \end{theorem}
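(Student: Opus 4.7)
The plan is to analyse a natural double cover $V\to S_{\rm can}$ and to present its pluricanonical ring as a weighted complete intersection.

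\emph{Step 1 (Double cover).} Since $\pi_1(S)=\ZZ/2\ZZ$, the canonical model $S_{\rm can}$ admits a double cover $\pi\colon V\to S_{\rm can}$ with Galois involution $\tau'$, associated to a $2$-torsion line bundle $L\in\pic(S_{\rm can})$. Standard double-cover formulas give $K_V^2=4$, $p_g(V)=3$, $q(V)=0$. Then Riemann--Roch and Kodaira vanishing yield $h^0(nK_V)=2n(n-1)+4$ for $n\geq 2$; in particular $h^0(K_V)=3$, $h^0(2K_V)=8$, $h^0(3K_V)=16$, $h^0(4K_V)=28$.

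\emph{Step 2 (Weighted presentation).} Compare the canonical ring $R_V:=\bigoplus_n H^0(nK_V)$ to the polynomial ring $R=\C[w,x_1,x_2,z_3,z_4]$ with weights $(1,1,1,2,2)$. The graded pieces of $R$ have dimensions $3,8,16,30,\ldots$ in degrees $1,2,3,4,\ldots$, matching those of $R_V$ exactly in degrees $\leq 3$ and exceeding them by $2$ in degree $4$. This forces $R_V$ to be generated by three degree-$1$ elements $w,x_1,x_2$ (spanning $H^0(K_V)$) and two degree-$2$ elements $z_3,z_4$ (completing a basis of $H^0(2K_V)$), with relations generated by exactly two independent polynomials of degree $4$. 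Hence $V\cong\mathrm{Proj}\,R_V$ is a complete intersection of two quartic hypersurfaces in $\PP(1^3,2^2)$.

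\emph{Step 3 (Equivariance).} The $\tau'$-action on $R_V$ decomposes the generators into eigenspaces: the decomposition $\pi_\ast\OO_V=\OO_{S_{\rm can}}\oplus L^{-1}$ together with $p_g(S)=1$ pins down exactly one degree-$1$ generator as $\tau'$-antiinvariant (call it $w$), while the remaining generators $x_1,x_2,z_3,z_4$ are $\tau'$-invariant. Since the defining ideal is $\tau'$-stable, both degree-$4$ relations must be $\tau'$-invariant, hence involve only even powers of $w$. Their degree-$2$ parts in $(z_3,z_4)$ form a pencil of binary quadratic forms; after a suitable linear change of $\langle z_3,z_4\rangle$ this pencil is simultaneously diagonalised, removing the $z_3z_4$ cross-term and reducing the relations to the stated form $F$, $G$.

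\emph{Main obstacle and converse.} The trickiest step is the last one: one must both pin down the $\tau'$-eigenvalue pattern precisely on the low-degree generators and verify that the pencil of $(z_3,z_4)$-quadratic parts has maximal rank, so that simultaneous diagonalisation is possible. For the converse direction, starting from a $V$ as described, a Jacobian-criterion analysis at the candidate singular loci $\{w=0\}\cap V$ and $\{x_1=x_2=0\}\cap V$ shows that $V$ has only rational double points precisely when $Q$ and $Q'$ share no common factor, and a direct calculation confirms that $V/\tau'$ is of general type with $K^2=2$, $p_g=1$, $q=0$, $\pi_1=\ZZ/2\ZZ$, whose bicanonical map factors through a $K3$ surface (matching the structure given by Theorem~\ref{rito}).
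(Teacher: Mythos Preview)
The paper does not prove this theorem; its entire proof is the sentence ``This is a combination of \cite[Theorem 2.8]{CD} and \cite[Theorem 2.9]{CD}.'' What you have written is rather a sketch of how Catanese and Debarre themselves argue, and the broad strategy---pass to the double cover $V$, compute the Hilbert function of its canonical ring, realise $R_V$ as a weighted complete intersection, then pin down the $\tau'$-action on the generators---is indeed the right one and is essentially theirs.

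That said, Step~3 has a genuine inconsistency. With the \'etale cover of Step~1 one has $H^0(K_V)^{\tau'}\cong H^0(K_S)$, which has dimension $p_g(S)=1$; so there is one \emph{invariant} and two \emph{anti-invariant} degree-$1$ generators, the opposite of what you assert. This is not a mere sign slip: the natural action of $\tau'$ on $R_V=\bigoplus_n H^0(nK_V)$ (via pullback of pluricanonical forms) and the coordinate action $w\mapsto -w$ differ by the character coming from $\tau'^{\ast}\omega=-\omega$ on the Poincar\'e residue form, and this twist has to be tracked explicitly. Relatedly, the involution $w\mapsto -w$ of the statement visibly fixes the curve $V\cap\{w=0\}$, so the double cover $V\to V/\tau'$ is not the \'etale one you postulate in Step~1, and the identification of $R_V^{\tau'}$ with the canonical ring of $S$ is more delicate than the splitting $\pi_\ast\OO_V=\OO_S\oplus L^{-1}$ suggests. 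Sorting out precisely these points is where the real work in \cite{CD} lies. A smaller gap: the Hilbert-function matching in Step~2 is necessary but not sufficient---one must also establish base-point-freeness of $|K_V|$ and injectivity of the multiplication maps $\operatorname{Sym}^k H^0(K_V)\to H^0(kK_V)$ for $k\le 3$ before concluding that $R_V$ is generated in degrees $\le 2$ with exactly two degree-$4$ relations.
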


\begin{proof} This is a combination of \cite[Theorem 2.8]{CD} and \cite[Theorem 2.9]{CD}.
\end{proof}

\begin{remark} The focus in the paper \cite{CD} is not on Todorov surfaces as such, but rather (as the title indicates) on {\em all\/} surfaces of general type with $p_g=1$, $q=0$ and $K^2=2$. Theorem \ref{cd} is actually a special case of the more general \cite[Theorem 2.9]{CD}, which describes the canonical model of {\em all\/} surfaces with $p_g=1$, $q=0$, $K^2=2$ and $\pi_1=\ZZ/2\ZZ$ as quotients of weighted complete intersections. As shown in loc. cit., such surfaces form a $16$--dimensional irreducible family. The Todorov surfaces with fundamental invariants $(1,10)$ correspond to  surfaces with these invariants and for which the bicanonical map is a Galois covering; they
 form a $12$--dimensional subfamily inside this $16$--dimensional family.
 
 The same remark can be made about Todorov surfaces with fundamental invariants $(0,9)$ (aka ``Kunev surfaces''): these form a $12$--dimensional subfamily inside the (18--dimensional) family of all surfaces of general type with $p_g=1$, $q=0$ and $K^2=1$; this family (and the $12$--dimensional subfamily of Kunev surfaces, corresponding to the bicanonical map being Galois) can also be explicitly described in terms of weighted complete intersections \cite{Cat}, \cite{Tod}.
 \end{remark}
 
 \begin{remark} Todorov surfaces appear as so--called ``non--standard cases'' in the classification of surfaces of general type whose bicanonical map fails to be birational \cite[Chapter 2]{BCP}. The Todorov surfaces with fundamental invariants $(1,10)$ appear as item (\rom4) of \cite[Theorem 8]{BCP}, the Kunev surfaces show up as item (\rom3) and the other Todorov surfaces are covered by item (\rom5) of \cite[Theorem 8]{BCP}.
 \end{remark}

It will be convenient to rephrase Theorem \ref{cd} as follows:

\begin{corollary}\label{family} Let $\PP$ be the weighted projective space $\PP:=\PP(1,1,1,2,2)$. Let 
  \[ B\subset \bigl(\PP H^0(\PP, {\mathcal O}_{\PP}(4))\bigr)^{\times 2}\]
  denote the subspace parametrizing pairs of weighted homogeneous equations of type
     \[ \begin{cases} F_b=z_3^2+c w^4+w^2 q(x_1,x_2)+Q(x_1,x_2)=0\ ,&\\
                         G_b=z_4^2+c^\prime w^4+w^2 q^\prime(x_1,x_2)+ Q^\prime(x_1,x_2)=0\ ,&\\
                       \end{cases}  \]
                   where $(F_b,G_b)$ is as in Theorem \ref{cd}, i.e. the variety
              \[ V_b:= \bigl\{  x\in \PP\ \vert\ F_b(x)=G_b(x)=0 \bigr\}    \]
           has only rational double points as singularities. (Thus, $B$ is a Zariski open in a product of projective spaces $\bar{B}=   \PP^{r}\times \PP^{r}$, parametrizing all equations of type $(F_b,G_b)$, without conditions on the singularities.)    
           
        Let
        \[ \VV\ \to\ B\]
        denote the total space of the family (i.e., the fibre over $b\in B$ is the variety $V_b\subset\PP$), and let
        \[ \Ss:=\VV/\tau \ \to\ B\]
        denote the family obtained by applying the (fixed point--free) involution $\tau:=\tau^\prime\times\hbox{id}_B$ to $\Ss\subset \PP\times B$ . Then $\Ss\to B$ is the family of all canonical models of Todorov surfaces with fundamental invariants $(1,10)$.
        
           \end{corollary}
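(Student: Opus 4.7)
The statement is essentially a reformulation of Theorem \ref{cd} in the language of families; the plan is to unpack the definitions and check that everything glues correctly over the parameter space.

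First I would verify that $B$ is actually Zariski open in $\bar B$. The universal family $\bar\VV \to \bar B$ sits inside $\PP \times \bar B$ as the incidence variety cut out by the two equations $F_b, G_b$; the locus where the fibre fails to have only rational double points as singularities is closed in $\bar B$ by standard semicontinuity (smoothness is open, and worse-than-RDP singularities form a closed condition obtained from jet-space computations). Hence $B \subset \bar B$ is Zariski open, and $\VV := \bar\VV \times_{\bar B} B \to B$ is a flat family whose fibre over $b \in B$ is precisely the weighted complete intersection $V_b$ considered in Theorem \ref{cd}.

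Next I would observe that the involution $\tau = \tau' \times \hbox{id}_B$ preserves the defining equations $F_b$ and $G_b$ (they only involve $w^2$, $w^4$, and polynomials in $x_1,x_2,z_3,z_4$), so $\tau$ restricts to an automorphism of $\VV$ over $B$. One checks fibrewise that $\tau'$ acts without fixed points on $V_b$: the fixed locus of $\tau'$ on $\PP$ is contained in $\{w=0\} \cup \{x_1=x_2=0\}$, and the Catanese--Debarre genericity conditions on $(F_b,G_b)$ (encoded in the requirement $b\in B$) ensure this fixed locus meets $V_b$ trivially. Consequently the quotient $\Ss := \VV/\tau \to B$ is a well-defined family of smooth (or RDP) surfaces, with fibre $\Ss_b = V_b/\tau'$.

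Finally I would invoke Theorem \ref{cd} in both directions: the theorem asserts that, for any $b \in B$, the quotient $V_b/\tau' = \Ss_b$ is the canonical model of a Todorov surface with fundamental invariants $(1,10)$, and conversely that every canonical model of such a surface arises as a quotient $V/\tau'$ for some choice of coefficients $(c,c',q,q',Q,Q')$ as in Theorem \ref{cd}, i.e.\ for some $b \in B$. This gives the two inclusions needed to identify $\Ss \to B$ with the family of all canonical models of Todorov surfaces with invariants $(1,10)$. There is no serious obstacle here; the only point that requires a moment of thought is the fixed-point freeness of $\tau$ on the total space $\VV$, which reduces to checking that $B$ has been chosen inside the locus where the double cover $V_b \to S_b$ is étale (equivalent to $\pi_1(S_b)=\ZZ/2\ZZ$).
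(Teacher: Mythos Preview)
The paper gives no separate proof of this corollary; it is presented as a direct rephrasing of Theorem~\ref{cd} in family language. Your approach---checking that $B$ is Zariski open, observing that $F_b,G_b$ are $\tau'$-invariant, and then invoking both directions of Theorem~\ref{cd}---is precisely how one would fill in the details, and your final paragraph correctly isolates Theorem~\ref{cd} as the real content.

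One point deserves correction, though it concerns a parenthetical in the statement rather than the main assertion. You correctly identify the fixed locus of $\tau'$ on $\PP$ as $\{w=0\}\cup\{x_1=x_2=0\}$, but your claim that $V_b$ avoids this locus is false. The intersection $V_b\cap\{w=0\}$ is cut out by $z_3^2+Q(x_1,x_2)=z_4^2+Q'(x_1,x_2)=0$ inside the threefold $\{w=0\}\cong\PP(1,1,2,2)$, hence is a curve for every $b\in B$; and $V_b\cap\{x_1=x_2=0\}$ contains the points with $z_3^2+cw^4=z_4^2+c'w^4=0$. Thus $\tau'$ is \emph{not} fixed-point-free on $V_b$, the double cover $V_b\to S_b$ is branched, and in particular it is not the \'etale cover realising $\pi_1(S_b)=\ZZ/2\ZZ$; your proposed equivalence in the last sentence does not hold. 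The paper's own parenthetical ``(fixed point--free)'' seems to be an imprecision that you have attempted to justify. Fortunately this does not affect the conclusion of the corollary: forming the quotient $\VV/\tau$ and identifying its fibres with the canonical models in question requires only Theorem~\ref{cd}, not freeness of the action.
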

           

\begin{proposition}\label{smooth} The quasi--projective varieties $\VV$ and $\Ss$ defined in Corollary \ref{family} are smooth.
\end{proposition}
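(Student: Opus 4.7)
The plan is to verify that $\VV$ is smooth via the Jacobian criterion, using the key observation that $F_b$ and $G_b$ depend linearly on disjoint blocks of parameters in $B$; smoothness of $\Ss$ then follows from the freeness of the $\tau$-action on $\VV$ (parenthetically asserted in Corollary~\ref{family}).

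First I would check that $\VV\subset\PP^{\mathrm{sm}}\times B$. The singular locus of $\PP=\PP(1,1,1,2,2)$ is the line $\Sigma=\{w=x_1=x_2=0\}$, where the $\mu_2\subset\C^\times$-stabilizer is nontrivial. At any $x\in\Sigma$ the defining equations reduce to $F_b(x)=z_3^2=0$ and $G_b(x)=z_4^2=0$, forcing all homogeneous coordinates to vanish, which is excluded from $\PP$. Hence $V_b\cap\Sigma=\emptyset$ for every $b\in B$, and $\VV$ lies in the smooth part of $\PP\times B$.

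On $\PP^{\mathrm{sm}}\times B$ I would then apply the Jacobian criterion to $\VV=\{F=G=0\}$. Since $F$ depends linearly on the coefficients $(c,q,Q)$ and $G$ depends linearly on the disjoint set $(c',q',Q')$, the restriction of $d_{(x,b)}(F,G)$ to $T_bB$ has block-diagonal form, with diagonal entries given by the evaluations of the monomials of $F$ (resp.\ of $G$) at $x$. The monomials of $F$ are the nine elements of
\[\{w^4,\ w^2x_1^2,\ w^2x_1x_2,\ w^2x_2^2,\ x_1^4,\ x_1^3x_2,\ x_1^2x_2^2,\ x_1x_2^3,\ x_2^4\},\]
which vanish simultaneously on $\PP$ exactly along $\Sigma$. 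By the previous step some such monomial is nonzero at any $x\in V_b$, and symmetrically for $G$. Hence $d(F,G)$ is surjective at every point of $\VV$, so $\VV$ is smooth.

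For $\Ss$: since $\tau=\tau'\times\mathrm{id}_B$ acts freely on $\VV$, the quotient map $\VV\to\Ss$ is an étale double cover, and the smoothness of $\VV$ descends to $\Ss$. The heart of the argument is the Jacobian/monomial computation above; the descent to $\Ss$ is formal once freeness of $\tau$ is granted.
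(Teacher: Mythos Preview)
Your proof is correct. Both your argument and the paper's hinge on the same observation---that the parameter monomials $w^4,\,w^2x_ix_j,\,x_i^ax_j^b$ (with $a+b=4$) have no common zero away from the line $\Sigma=\{w=x_1=x_2=0\}$---but you package it as a direct Jacobian computation, whereas the paper phrases it geometrically: it passes to the projective closure $\bar{B}$, records the base-point-freeness as a separate lemma (Lemma~\ref{one}), observes that the incidence variety $\bar{\VV}\to\PP$ therefore has constant fibre $\PP^{r-1}\times\PP^{r-1}$ over $\PP_{reg}$, and concludes that this piece is a tower of projective bundles over a smooth base, hence smooth. Your route is slightly more elementary and avoids introducing $\bar{\VV}$; the paper's route has the side benefit that Lemma~\ref{one} is reused later in the analysis of the fibration $\pi\colon M\to\wt{\PP\times\PP}$ in Section~\ref{seckey}.
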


\begin{proof} 

We first establish a preparatory lemma:

 \begin{lemma}\label{one} For each point
       \[x\in\PP\setminus  [0:0:0:1:0]\]
       there exists a polynomial $G_b$ as in corollary \ref{family} such that
       \[ x\not\in \ \ (G_b=0)\ .\]
       
      For each point
        \[x\in \PP \setminus [0:0:0:0:1])\ ,\] 
       there exists a polynomial $F_b$ as in corollary \ref{family} such that
       \[ x\not\in \ \ (F_b=0)\ .\]
        \end{lemma}    
     
     \begin{proof} If $x\in\PP$ is different from $[0:0:0:1:0]$, consider the image of $x$ under the projection
     \[  \PP\setminus [0:0:0:1:0]\ \to\ \PP(1,1,1,2)\ ,\]
     given by forgetting the $z_3$ coordinate. It is easily seen that the linear system defined by the $G_b$ on $\PP(1,1,1,2)$ is base--point--free.
     
     Likewise, for $x\in\PP$ different from $[0:0:0:0:1]$, consider the projection
      \[  \PP\setminus [0:0:0:0:1]\ \to\ \PP(1,1,1,2)\ ,\]
     given by forgetting the $z_4$ coordinate.      
       \end{proof}
     
 Consider now $\bar{B}=\PP^r\times\PP^r$ the projective closure of $B$, parametrizing complete intersections that may be badly singular. Let 
   \[  \bar{\VV} \ \subset \bar{B}\times\PP \]
   denote the incidence variety containing $\VV$ as an open subset, and let $\pi\colon\bar{\VV}\to\PP$ denote the morphism induced by projection. Lemma \ref{one} says that for any point 
     \[ p\in\PP\setminus ([0:0:0:1:0]\cup    [0:0:0:0:1]  \ ,\]
     the fibre over $p$ is
     \[ \pi^{-1}(p)\cong \PP^{r-1}\times\PP^{r-1}\ .\]
     It follows that the quasi--projective variety
     \[  \bar{\VV}_{reg}:= \pi^{-1}(\PP_{reg})\ ,\]
     being a projective bundle over a projective bundle over the smooth variety $\PP_{reg}$, is smooth.
     
     But the singular locus of $\PP$ is exactly the line $w=x_1=x_2=0$, and a direct verification shows that $V_b$ as in Corollary \ref{family} does not meet this singular line, i.e. $V_b\subset\PP_{reg}$ for each $b\in B$ and hence
     \[ \VV\ \subset\ \bar{\VV}_{reg}\ .\]
   This proves smoothness of $\VV$. The smoothness of $\Ss$ now follows since $\Ss$ is the quotient of $\VV$ under a fixed point--free involution.  
      \end{proof}    
      
    \begin{corollary}\label{genfibre} The general $V_b$ and the general $S_b$ are smooth.
    \end{corollary}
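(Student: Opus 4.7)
The plan is to deduce the corollary from Proposition \ref{smooth} by a direct appeal to generic smoothness (Sard's theorem) in characteristic zero. The base $B$ is smooth, being a Zariski open subset of the product of projective spaces $\bar{B}=\PP^r\times\PP^r$. The projection $p_\VV\colon \VV\to B$ is surjective by the very definition of $\VV$: each $b\in B$ gives rise to the (nonempty) complete intersection $V_b\subset \PP$. By Proposition \ref{smooth} the total space $\VV$ is smooth, so Grothendieck's generic smoothness theorem produces a dense Zariski open $U\subset B$ over which $p_\VV$ is a smooth morphism. In particular, $V_b$ is smooth for every $b\in U$.

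The same argument applied to the surjective projection $p_\Ss\colon \Ss\to B$, using that $\Ss$ is smooth by Proposition \ref{smooth}, yields a dense open $U^\prime\subset B$ over which all fibers $S_b$ are smooth; any $b\in U\cap U^\prime$ then satisfies both conclusions. Alternatively, since $\tau$ acts fixed-point-freely on $\VV$, the involution $\tau^\prime$ restricts to a fixed-point-free involution on every fiber $V_b$, so smoothness of $V_b$ automatically implies smoothness of $S_b=V_b/\tau^\prime$, making the second appeal to generic smoothness superfluous.

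There is no genuine obstacle to identify: the corollary is a purely formal consequence of Proposition \ref{smooth} via the standard generic smoothness theorem. The real content lies in Proposition \ref{smooth}, where smoothness of $\VV$ had to be checked by means of the base-point freeness lemma (Lemma \ref{one}) together with the observation that $V_b$ never meets the singular locus of the weighted projective space $\PP$.
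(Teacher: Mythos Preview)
Your proposal is correct and matches the paper's intended argument: the corollary is stated immediately after Proposition \ref{smooth} with no separate proof, so it is meant to follow exactly by generic smoothness applied to the smooth projection $\VV\to B$ (and $\Ss\to B$) over the smooth base $B$, as you wrote. Your additional observation that smoothness of $S_b$ follows automatically from that of $V_b$ via the fixed-point-free quotient is a nice touch, though not needed.
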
 
    
    \begin{remark} Corollary \ref{genfibre} is also established (by a different argument) in \cite[Remark 2.10]{CD}.
    \end{remark}

\section{Main result}

In this section, the main result as announced in the introduction (Theorem \ref{main}) is reduced to a statement concerning the Chow group of codimension $2$ cycles on the relative self--product of a family (Proposition \ref{key}). 
This reduction step is done by reasoning family--wise, using the method of ``spread'' of algebraic cycles developed by Voisin in her work on the Bloch/Hodge equivalence \cite{V0}, \cite{V1}, \cite{Vo}.
The proof of Proposition \ref{key} is postponed to section \ref{seckey}.

\begin{theorem}\label{main} Let $S$ be a Todorov surface with $K^2_S=2$ and $\pi_1(S)=\ZZ/ 2\ZZ$, and let $P$ be the $K3$ surface obtained as a resolution of singularities of $S/\iota$.
The natural correspondence from $S$ to $P$ induces an isomorphism 
  \[  A^2_{hom}(S)_{\QQ}\ \cong\ A^2_{hom}(P)_{\QQ}\ .\]
\end{theorem}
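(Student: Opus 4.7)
My plan is to use Voisin's ``spread'' approach: combine a Hodge-theoretic vanishing for the involution $\iota$ with a relative cycle-theoretic identity on the family $\Ss\to B$ of Corollary \ref{family}. The relative identity is the announced Proposition \ref{key}, whose proof is deferred; my focus here is on the reduction.

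First I would rephrase the conclusion. Let $\pi\colon S\to S/\iota$ be the quotient and $r\colon P\to S/\iota$ the minimal desingularisation. Since the singularities of $S/\iota$ are rational double points, with $\QQ$-coefficients $r$ induces an isomorphism $A^2_{hom}(P)_\QQ\cong A^2_{hom}(S/\iota)_\QQ$, and since $\pi$ is generically of degree $2$, the relation $\pi_\ast\pi^\ast=2\cdot\mathrm{id}$ identifies $A^2_{hom}(S/\iota)_\QQ$ with the $\iota_\ast$-invariant part of $A^2_{hom}(S)_\QQ$. The natural correspondence from $S$ to $P$ is thus an isomorphism on $A^2_{hom}$ if and only if $\iota_\ast$ acts as the identity on $A^2_{hom}(S)_\QQ$. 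The task reduces to showing the vanishing of the $\iota_\ast$-anti-invariant part of $A^2_{hom}(S)_\QQ$, a Bloch-type statement.

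The Hodge-theoretic analogue is elementary: since $p_g(S)=1$, $q(S)=0$, and the bicanonical map factors through $S/\iota$, the pullback $\iota^\ast$ fixes $H^{2,0}(S)$ and hence acts trivially on the transcendental sub-Hodge-structure $H^2_{tr}(S,\QQ)$. Globalising over $B$, I would form the relative anti-invariant projector $\Pi_{-}\in A^2(\Ss\times_B\Ss)_\QQ$. The cohomological input above says exactly that on each fibre $[\Pi_{-,b}]$ is supported, modulo classes that act as zero on $A^2_{hom}$, on a product of divisors. Proposition \ref{key} should then be the Chow-theoretic enhancement: up to cycles arising from the ambient projective-bundle geometry of Proposition \ref{smooth} and cycles supported over a proper closed subset of $B$, the relative projector $\Pi_{-}$ is rationally equivalent to zero on $\Ss\times_B\Ss$. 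Restricting to any smooth fibre yields the desired vanishing and hence Theorem \ref{main}.

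The main obstacle will be Proposition \ref{key} itself. The key structural input making the spread tractable is Proposition \ref{smooth}: $\VV$ is an open subset of the double projective bundle $\bar\VV_{reg}$ over the smooth locus of $\PP(1,1,1,2,2)$, whose Chow motive splits into Tate summands. Combined with a careful Leray-type analysis of $\bar\VV_{reg}\times_B\bar\VV_{reg}$ and control of the cycles living over the discriminant locus in $B$, this should give the relative identity; verifying that the resulting error cycles act trivially on $A^2_{hom}$ of a general fibre is where the bulk of the technical work of section \ref{seckey} will concentrate.
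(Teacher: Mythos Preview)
Your reformulation is correct and matches the paper: since ${}^t\Gamma_f\circ\Gamma_f=\Delta_S+\Gamma_\iota$, the paper's relative correspondence $2\Delta_{\Ss}-{}^t\Gamma_f\circ\Gamma_f$ is exactly $2\Pi_-$, and the overall strategy (spread the fibrewise Hodge vanishing, then invoke Proposition~\ref{key}) is the same. However, the logical architecture of your reduction is off in a way that leaves a genuine gap.

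You describe Proposition~\ref{key} as asserting that $\Pi_-$ itself is rationally trivial up to ambient and boundary corrections. That is not what Proposition~\ref{key} says: it is the blanket statement $A^2_{hom}(\Ss\times_{B'}\Ss)_\QQ=0$, with no reference to $\Pi_-$. To use it, you must first arrange that (a suitable modification of) $\Pi_-$ is \emph{globally} homologically trivial on $\Ss\times_{B'}\Ss$, and this is precisely the work you have skipped. Knowing only that $[\Pi_{-,b}]$ is supported on a product of divisors for each $b$ does not give a global cycle supported on a relative divisor; one needs Voisin's spreading result \cite[Proposition~2.7]{V0} to produce $\YY\subset\Ss$ and $\Gamma$ supported on $\YY\times_B\YY$ with $\Pi_--\Gamma$ fibrewise cohomologically trivial. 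Even then, fibrewise cohomological triviality is weaker than global triviality in $H^4(\Ss\times_{B'}\Ss)$: the Leray argument you allude to (\cite[Lemma~2.12]{V0}, \cite[Lemma~1.2]{V8}) belongs here, in the reduction, not inside the proof of Proposition~\ref{key}, and it produces a further correction by the restriction of a cycle $c\in A^2(\PP\times\PP)_\QQ$. Only after both steps is the corrected class in $A^2_{hom}$, so that Proposition~\ref{key} applies. Your last paragraph conflates these reduction steps with the content of section~\ref{seckey}, which is an independent stratification/linear-variety computation \`a la Totaro and says nothing about error cycles on fibres.

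Two smaller omissions: you work directly on $\Ss\times_B\Ss$, but this (and $\MM$) is singular, so the paper passes to desingularised families $\wt{\Ss},\wt{\MM}$ and tracks the resulting exceptional corrections via Lemma~\ref{desing}; and your argument only reaches the general fibre, whereas the theorem is stated for \emph{every} $S$, so one needs the specialisation step (Lemma~\ref{vois}) to propagate the rational equivalence from an open of $B$ to all of $B$.
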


Theorem \ref{main} implies the truth of Voisin's conjecture (Conjecture \ref{conj}) for $S$:

\begin{corollary}\label{true} Let $S$ be a Todorov surface with $K^2_S=2$ and $\pi_1(S)=\ZZ/2\ZZ$. Let $a,a^\prime\in A^2_{hom}(S)$ be two $0$--cycles of degree $0$. Then
  \[ a\times a^\prime= a^\prime\times a\ \ \ \hbox{in}\ A^4(S\times S)\ .\]
\end{corollary}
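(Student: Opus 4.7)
The plan is to deduce Corollary \ref{true} from Theorem \ref{main} combined with a known case of Voisin's conjecture for the associated $K3$ surface $P$.

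First, I would observe that Voisin's conjecture already holds for $P$: by Theorem \ref{rito}, $P$ is a $K3$ surface admitting a degree $2$ morphism to $\PP^2$ ramified along the union of two smooth cubics. The nine intersection points of the cubics become nodes of the sextic branch locus and produce (after resolution) nine exceptional $(-2)$-classes on $P$, so $P$ has Picard number at least $10$; for $K3$ surfaces obtained as such double planes, Voisin has verified her conjecture in \cite{V9}. Hence
  \[ b \times b' = b' \times b \ \ \hbox{in}\ A^4(P\times P)_{\QQ} \ \ \hbox{for all}\ b,b'\in A^2_{hom}(P)_{\QQ}. \]

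Next I would exploit the natural correspondence $\Gamma \in A^2(S\times P)_{\QQ}$ of Theorem \ref{main}, namely the closure of the graph of the degree $2$ rational map $\phi\colon S\dashrightarrow P$. The projection formula gives $\Gamma_*\Gamma^* = \phi_*\phi^* = 2\cdot \hbox{id}$ on $A^2(P)_{\QQ}$, and since Theorem \ref{main} ensures that $\Gamma_*\colon A^2_{hom}(S)_{\QQ}\to A^2_{hom}(P)_{\QQ}$ is a bijection, a short chase (apply $\Gamma_*$ to both sides of the desired identity and cancel) yields the companion formula
  \[ \Gamma^*\Gamma_* = 2\cdot \hbox{id} \ \ \hbox{on}\ A^2_{hom}(S)_{\QQ}. \]

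Given $a,a'\in A^2_{hom}(S)_{\QQ}$, set $b:=\Gamma_*a$ and $b':=\Gamma_*a'$, and let $\wt\Gamma\in A^*(S\times S\times P\times P)_{\QQ}$ denote the product correspondence obtained by rearranging $\Gamma\times \Gamma$. By functoriality of exterior products,
  \[ \wt\Gamma_*(a\times a' - a'\times a) = b\times b' - b'\times b = 0 \ \ \hbox{in}\ A^4(P\times P)_{\QQ} \]
by the first step. Applying $\wt\Gamma^*$ and using the second step on each factor then gives
  \[ 4(a\times a' - a'\times a) = \wt\Gamma^*\wt\Gamma_*(a\times a' - a'\times a) = \wt\Gamma^*(0) = 0 \ \ \hbox{in}\ A^4(S\times S)_{\QQ}, \]
from which the conclusion $a\times a' = a'\times a$ follows (we work with rational coefficients throughout).

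The real substance of the paper lies in Theorem \ref{main} itself (and the motivic refinement Theorem \ref{main2}), whose proof rests on the family-wise ``spread of cycles'' argument sketched in the introduction; the deduction of Corollary \ref{true} presented above is then a short formal consequence, modulo Voisin's previously established case for $P$.
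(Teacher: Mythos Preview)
Your argument is correct and follows essentially the same route as the paper: invoke Theorem \ref{rito} and Voisin's result \cite[Theorem 3.4]{V9} to get the conjecture for $P$, then transport back to $S$ via the natural correspondence using Theorem \ref{main}. The one step you leave implicit, which the paper makes explicit, is the appeal to Rojtman's theorem \cite{R}: the corollary is stated for integral coefficients, and your argument only yields $a\times a'=a'\times a$ in $A^4(S\times S)_{\QQ}$, so you need that $A^4_{hom}(S\times S)$ is torsion-free to conclude.
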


\begin{proof} Since (by Rojtman's theorem \cite{R}) there is no torsion in $A^4_{hom}(S\times S)$, it suffices to prove the statement with rational coefficients. Let $P$ be the $K3$ surface obtained by resolving the singularities of $S/\iota$. There is a commutative diagram
  \[ \begin{array}[c]{ccc}
         A^2_{hom}(S)_{\QQ}\otimes A^2_{hom}(S)_{\QQ} &\to & A^4(S\times S)_{\QQ}\\
         \uparrow&&\uparrow\\
         A^2_{hom}(P)_{\QQ}\otimes A^2_{hom}(P)_{\QQ} &\to & A^4(P\times P)_{\QQ}\\ 
         \end{array}\]
         
   Here the left vertical arrow is an isomorphism (Theorem \ref{main}). The $K3$ surface $P$ admits a description as a blow--up of a double cover of $\PP^2$ branched along $2$ cubics (Theorem \ref{rito}). It follows that Voisin's conjecture is true for $P$, i.e. any $b,b^\prime  \in A^2_{hom}P$ satisfy
   \[  b\times b^\prime= b^\prime\times b\ \ \hbox{in}\ A^4(P\times P)\ ;\]    
   this is proven by Voisin \cite[Theorem 3.4]{V9}. This implies Voisin's conjecture is true for $S$.
   \end{proof}         

We now proceed to prove Theorem \ref{main}:

\begin{proof}(of Theorem \ref{main}) (This proof is directly inspired by Voisin's work on the Bloch/Hodge equivalence \cite{V0}, \cite{V1}, \cite{Vo}.)

The work of Catanese--Debarre (\cite{CD}, theorem \ref{cd}) implies that canonical models of Todorov surfaces with fundamental invariants $(1,10)$ form a family
  \[ \Ss\ \to\ B\]
  as in Corollary \ref{family}. Moreover, there exist morphisms of families over $B$
  \[  \VV\ \to\ \Ss\ \xrightarrow{f}\ \MM \ \to\ \EE\ \to\ B\ ,\]
  where $\EE$ is the family of quadric cones in $\PP^3$ (the quadric cone $\EE_b$ is the image of $\Ss_b$ under the bicanonical map \cite{CD}), and $\MM$ is the family of
  $K3$ surfaces with rational double points. Recall from corollary \ref{family} that $\Ss=\VV/\tau$ where $\tau$ is an involution, and $\MM=\Ss/\iota$ where $\iota$ is an involution. As explained in \cite[Remark 2.10]{CD}, the family $\MM$ can be obtained from the family $\EE$ by taking a double cover with prescribed ramification, and the family $\Ss$ is obtained from $\MM$ by taking a double cover, and the same for $\VV$ over $\Ss$.
  
  To be on the safe side, we prefer to resolve singularities and work with smooth varieties.
  That is, we construct a commutative diagram of families over $B$
    \[ \begin{array}[c]{ccc}  \wt{\VV}& \to& \VV \\
                              \downarrow&& \downarrow\\
                                   \wt{\Ss}& \to & \Ss\\
                                  \ \ \ \  \downarrow{\wt{f}}&& \ \ \downarrow{f}\\
                                  \wt{\MM}&\to& \MM\\
                                  \downarrow&&\ \ \downarrow{g}\\
                                  \wt{\EE}&\to& \EE\\
                                  &\searrow\ &\downarrow\\
                                  &&B\\
                                  \end{array}\]
                    where varieties in the left column are smooth.
                    This is not harmful to the argument, thanks to the following lemma:
                    
                 \begin{lemma}\label{desing}
                 
                 For any $b\in B$, the induced morphisms
                 \[ \wt{\VV}_b\to \VV_b\ , \ \ \wt{S}_b\to S_b\ ,\ \ \wt{\MM}_b\to \MM_b\  ,\ \  
                  \wt{\EE}_b\to \EE_b
                  \]
                 are birational.
                  \end{lemma}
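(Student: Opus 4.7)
The plan is to show that for each of the four total spaces $X\in\{\VV,\Ss,\MM,\EE\}$ and each $b\in B$, the singular locus of the total space $X$ meets the surface fiber $X_b$ only in a proper closed subset. Granting this, any resolution $\wt{X}\to X$ restricts to a map $\wt{X}_b\to X_b$ that is an isomorphism over a dense open of $X_b$, and is therefore birational.

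For $\wt{\VV}\to\VV$ and $\wt{\Ss}\to\Ss$ there is in fact nothing to resolve: by Proposition~\ref{smooth}, $\VV$ and $\Ss$ are already smooth. One can therefore take $\wt{\VV}=\VV$ and $\wt{\Ss}=\Ss$; any further blow-ups needed in order to make the tower $\wt{\VV}\to\wt{\Ss}\to\wt{\MM}\to\wt{\EE}$ fit together (for instance, pulled back from the exceptional loci of the resolutions downstairs) can be chosen with centers of codimension $\geq 2$ in the total space which, by the arguments below, intersect each fiber only in finitely many points.

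For $\wt{\EE}\to\EE$: each $\EE_b$ is a quadric cone in $\PP^3$ whose single singular point (the vertex) varies algebraically over $B$. The vertex thus traces out a section of $\EE\to B$ contained in $\mathrm{Sing}(\EE)$, meeting each fiber in one point, so $\wt{\EE}_b\to\EE_b$ is an isomorphism off a finite set. For $\wt{\MM}\to\MM$: write $\MM=\Ss/\iota$ with $\Ss$ smooth (Proposition~\ref{smooth}) and $\iota$ the involution of Corollary~\ref{family}; then $\mathrm{Sing}(\MM)$ is the image of the fixed locus of $\iota$ on $\Ss$, and on each fiber $\Ss_b$ the fixed locus of $\iota_b$ maps to the finite set of rational double points of $\MM_b=\Ss_b/\iota_b$. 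Hence $\mathrm{Sing}(\MM)\cap\MM_b$ is finite, and the resolution restricts to a birational morphism on $\MM_b$.

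The only genuine subtlety is the bookkeeping required to choose all four resolutions compatibly, so that the full commutative diagram appearing in the proof of Theorem~\ref{main} actually exists without any exceptional divisor accidentally swallowing a whole fiber. This is benign: the analysis above shows that all relevant exceptional loci lie above finite subsets of the surface fibers, hence are of codimension $2$ in $X_b$ and a fortiori cannot fill $X_b$.
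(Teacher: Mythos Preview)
Your overall strategy matches the paper's: show that for each total space $X\in\{\VV,\Ss,\MM,\EE\}$, the locus over which a resolution $\wt{X}\to X$ fails to be an isomorphism meets each surface fiber $X_b$ properly. There is, however, an inaccuracy in your treatment of $\MM$. You assert that ``on each fiber $\Ss_b$ the fixed locus of $\iota_b$ maps to the finite set of rational double points of $\MM_b$''. This is not so: since $\MM_b$ is a $K3$ surface with rational double points, Hurwitz gives $K_{S_b}\sim R_b$ for the ramification divisor $R_b$ of $S_b\to\MM_b$, so the fixed locus of $\iota_b$ contains an effective canonical \emph{curve}, whose image in $\MM_b$ is one--dimensional. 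The quotient is of course smooth along the image of the divisorial part of the fixed locus, so your \emph{conclusion} that $\mathrm{Sing}(\MM)\cap\MM_b$ is finite remains valid (indeed $\mathrm{Sing}(\MM)\cap\MM_b=\mathrm{Sing}(\MM_b)$ since $\MM\to B$ is flat over a smooth base); but as written your two claims, that $\mathrm{Sing}(\MM)$ equals the image of the fixed locus and that this image is fiberwise finite, are individually false and only accidentally yield the right answer together.

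On the compatibility issue you raise at the end, the paper handles this more concretely than you do. Rather than choosing the four resolutions independently and then arguing that everything can be made to match, it builds the tower inductively: $\wt{\MM}$ is \emph{defined} as a resolution of the fiber product $\wt{\EE}\times_\EE\MM$, then $\wt{\Ss}$ as a resolution of $\wt{\MM}\times_\MM\Ss$, and so on. One then argues that $\wt{\MM}\to\MM$ is an isomorphism over the open $g^{-1}(\EE_{\rm reg})$, which meets every fiber $\MM_b$; this makes the birationality on fibers automatic and sidesteps having to track the pulled--back exceptional loci. The paper also contents itself with the coarser bound $\dim\bigl(\mathrm{Sing}(\EE)\cap\EE_b\bigr)\le 1$, which is all that is needed for a surface fiber, whereas you pin this down to the single vertex; either suffices.
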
        
                  
              \begin{proof} As noted above (Proposition \ref{smooth}), $\Ss$ and $\VV$ are smooth.
              It follows that the singular locus of $\MM$ consists of the image of the fixed locus of the involution associated to $f$.
              Likewise, the singular locus of $\EE$ consists of the image of the singular locus of $\MM$, plus the image of the fixed locus of the involution associated to $g$.
              Since the involutions associated to $f$ and $g$ restrict to an involution on each fibre, we have
                \[ \dim\Bigl(\hbox{Sing}(\EE)\cap \EE_b\Bigr)\le 1\ \ \hbox{for\ all\ }b\in B\ .\]
                This implies the induced morphism 
                \[   \wt{\EE}_b\ \to\ \EE_b \]
                is birational for all $b\in B$.
                
                The variety $\wt{\MM}$ is obtained by resolving the singularities of the fibre product $\wt{\EE}\times_{\EE} \MM$. Since the open subset $\EE_{\rm reg}$ meets every fibre $\EE_b$,
                and $g$ restricts to a smooth morphism over $\EE_{\rm reg}$, the morphism $\wt{\MM}\to \MM$ is an isomorphism over the open $g^{-1}(\EE_{\rm reg})$. This open subset meets all the fibres $\MM_b$, and so
                \[ \wt{\MM}_b\ \to\ \MM_b\]
                is birational for all $b\in B$.
                
            The argument for $\Ss$ and $\VV$ is the same.

              \end{proof}

  We will be interested in the family
  \[   \wt{\Ss}\times_B \wt{\Ss}\ \to\ B\ .\]
  There is a relative correspondence
    \[ \wt{\DD}:= 2\Delta_{\wt{\Ss}}-  ({}^t \Gamma_{\wt{f}}) \circ \Gamma_{\wt{f}}\ \ \in A_{s-2}(\wt{\Ss}\times_B \wt{\Ss}) \]
   (here $s$ denotes the dimension of $\wt{\Ss}\times_B \wt{\Ss}$, $\Delta_{\wt{\Ss}}$ is the relative diagonal, $\Gamma_{\wt{f}}$ is the graph of $\wt{f}$, and relative correspondences over $B$ can be composed as in \cite{CH}, \cite{GHM}, \cite{NS}, \cite{DM}, \cite[8.1.2]{MNP} since $\wt{\Ss}$, $\wt{\MM}$ are smooth. At this point we grade the Chow group by dimension rather than codimension since $\wt{\Ss}\times_B \wt{\Ss}$ may be singular).
  For any $b\in B$, we have that $H^{0,2}(S_b)$ is a one--dimensional $\C$--vector space and
  \begin{equation}\label{ok}  ({f}_b)^\ast ({f}_b)_\ast = 2\hbox{id}\colon\ \ H^{0,2}({S}_b)\ \to\ H^{0,2}({S}_b)\ .\end{equation}
  
  We know that $H^{0,2}$ is a birational invariant for surfaces with rational singularities. (To see this, one notes that if $S$ is a surface with rational singularities and $\wt{S}\to S$ is a resolution of singularities the Leray spectral sequence implies $H^i(S,\OO_S)\to H^i(\wt{S},\OO_{\wt{S}})$ is an isomorphism, and so $\gr^0_F H^i(S,\C)\cong \gr^0_F H^i(\wt{S},\C)$ since rational singularities are Du Bois \cite[Theorem S]{Kov}). Hence, it follows from equality (\ref{ok}) that also
    
    \[ (\wt{f}_b)^\ast (\wt{f}_b)_\ast = 2\hbox{id}\colon\ \ H^{0,2}(\wt{S}_b)\ \to\ H^{0,2}(\wt{S}_b)\ \]
    
    Using the Lefschetz $(1,1)$ theorem on $\wt{S}_b$, this implies that for any $b\in B$, there exist a divisor $Y_b\subset \wt{S}_b$, and a cycle $\gamma_b\in A^2(\wt{S}_b\times \wt{S}_b)_{\QQ}$ supported on $Y_b\times Y_b$, such that
    \[  \wt{\DD}\vert_{\wt{S}_b\times \wt{S}_b}=\gamma_b\ \ \hbox{in}\ H^4(\wt{S}_b\times \wt{S}_b)\ ,\ \ \hbox{for\ all\ }b\in B\ .\]
    (Here, for any relative correspondence $\Gamma$, we use the notation $\Gamma\vert_{\wt{S}_b\times \wt{S}_b}$ to indicate the result
    of applying to $\Gamma$ the refined Gysin homomorphism \cite{F}  induced by $b\to B$.)
        
    Thanks to Voisin's ``spreading out'' result \cite[Proposition 2.7]{V0}, we can find a divisor $\YY\subset\wt{\Ss}$, and a cycle $\Gamma\in A_{s-2}(\wt{\Ss}\times_B \wt{\Ss})_{\QQ}$ supported on $\YY\times_B \YY$, with the property that the cycle
    \[  \wt{\DD}^\prime:= \wt{\DD}-\Gamma\ \ \in A_{s-2}(\wt{\Ss}\times_B \wt{\Ss})_{\QQ} \]
    has cohomologically trivial restriction to each fibre:
    \[  (\wt{\DD}^\prime)\vert_{\wt{S}_b\times \wt{S}_b}=0\ \ \hbox{in}\ H^4(\wt{S}_b\times \wt{S}_b)\ ,\ \ \hbox{for\ all\ }b\in B\ .\]
    
   After shrinking the base $B$ (i.e., after replacing $B$ by a Zariski open $B^\prime\subset B$), we may suppose that all the $S_b$ are smooth (Corollary \ref{genfibre}), and the morphisms
   $\VV\to B^\prime$,
    $\Ss\to B^\prime$ are smooth (so in particular, the fibre product $\Ss\times_{B^\prime} \Ss$ is smooth). Repeating the above procedure (or simply taking the push--forward of the restriction of $\wt{\DD}^\prime$), one finds
   a cycle
     \[ \DD^\prime \in A^{2}(\Ss\times_{B^\prime} \Ss)_{\QQ}\ .\]
    
     Note that there is a relation
     \[ \wt{\DD}^\prime\vert_{\wt{\Ss}\times_{B^\prime} \wt{\Ss}}= \phi^\ast (\DD^\prime) + \gamma\ \ \in A^{2}(\wt{\Ss}\times_{B^\prime} \wt{\Ss})_{\QQ}\ ,\]
     where 
     \[ \phi\colon \wt{\Ss}\times_{B^\prime}\wt{\Ss}\to \Ss\times_{B^\prime} \Ss\]
     is the birational morphism induced by the resolution morphism, and $\gamma$ is a cycle supported on
     \[ \Z\times_{B^\prime} \wt{\Ss}\ \cup\ \wt{\Ss}\times_{B^\prime} \Z\ ,\]
     for some divisor $\Z\subset \wt{\Ss}$. This is because the cycles $\wt{\DD}^\prime\vert_{\wt{\Ss}\times_{B^\prime} \wt{\Ss}}$ and $\phi^\ast (\DD^\prime)$ coincide outside of the exceptional locus of $\phi$, which is a divisor of the form     $ \Z\times_{B^\prime} \wt{\Ss}\ \cup\ \wt{\Ss}\times_{B^\prime} \Z $    
           (and more precisely: the extension of $\Z$ to the larger family $\wt{\Ss}\to B$ is such that $\Z_b\subset \wt{S}_b$ is a divisor for all $b\in B$, cf. Lemma \ref{desing}).
          
   Then, using a Leray spectral sequence argument as in \cite[Lemma 2.12]{V0} (and also as in \cite[Lemma 1.2]{V8}, where the set--up is exactly as here in the present proof), we know that after some further shrinking of the base $B^\prime$, there exists a cycle $c\in A^2(\PP\times \PP)_{\QQ}$ such that
    \[  \DD^{\prime\prime}:= \DD^\prime+(c\times B^\prime)\vert_{\Ss\times_{B^\prime} \Ss}=0\ \ \hbox{in}\ H^{4}(\Ss\times_{B^\prime} \Ss)\ .\]

      But then, since
     \[ A^{2}_{hom}(\Ss\times_{B^\prime} \Ss)_{\QQ}=0 \]
     by Proposition \ref{key} below, we have a rational equivalence
     \[   \DD^{\prime\prime}=0\ \ \hbox{in}\ A^{2}(\Ss\times_{B^\prime} \Ss)_{\QQ}\ .\]
     This implies that there is also a rational equivalence
     \[  \wt{\DD}^\prime\vert_{\wt{\Ss}\times_{B^\prime} \wt{\Ss}}=\phi^\ast\bigl((c\times B^\prime)\vert_{\Ss\times_{B^\prime} \Ss}\bigr)+\gamma\ \ \in A^{2}(\wt{\Ss}\times_{B^\prime} \wt{\Ss})_{\QQ}\ ,\]
     with $\gamma$ as above supported in codimension $1$.     
     
     Restricting to a general $b\in B$ (such that $b\in B^\prime$ and the divisor $\YY\subset \Ss$ restricts to a divisor $Y_b\subset S_b$), we now find a decomposition of the diagonal
     \[ \begin{split} 2 \Delta_{\wt{S}_b}= {}^t \Gamma_{\wt{f}_b}\circ \Gamma_{\wt{f}_b} &+\{\hbox{something\ supported\ on\ }Y_b\times Y_b\}\\ 
            & +\{\hbox{something\ supported\ on\ }\Z_b\times \wt{S}_b\cup \wt{S}_b\times \Z_b\}\\            &+ \{\hbox{something\ coming\ from\ }\PP\times\PP\}\ \ \ \ \hbox{in}\ A^2(\wt{S}_b\times \wt{S}_b)_{\QQ}\ .\\
           \end{split}\]
    Now, by considering the action of correspondences (and noting that only the first term acts on $A^2_{hom}(\wt{S}_b)_{\QQ}=A^2_{AJ}(\wt{S}_b)_{\QQ}$), this decomposition implies that
     \[    (\wt{f}_b)^\ast (\wt{f}_b)_\ast=2\hbox{id}\colon\ \ A^2_{hom}(\wt{S}_b)_{\QQ}\ \to\ A^2_{hom}(\wt{S}_b)_{\QQ}\ ,\ \ \hbox{for\ general\ }b\in B\ . \]
    This last equality (combined with the obvious fact that $(\wt{f}_b)_\ast (\wt{f}_b)^\ast$ is also twice the identity on Chow groups) proves 
    \[  A^2_{hom}(\wt{S}_b)_{\QQ}\cong A^2_{hom}(\wt{M}_b)_{\QQ}\]
     for the general $\wt{S}_b$. 
     
     To extend this statement to {\em all\/} $b\in B$, one considers the cycle 
           \[\wt{\DD}^{\prime}-\phi^\ast\bigl((c\times B)\vert_{\Ss\times_B \Ss}\bigr)-\bar{\gamma}\ \ \in A_{s-2}(\wt{\Ss}\times_{B} \wt{\Ss})_{\QQ}\ ,\]
           where $\bar{\gamma}$ denotes an extension of $\gamma$ that is still supported on an extension of the divisor $\Z$ over $B$ (and by abuse of language, we use the same symbol $\phi$ to indicate the induced morphism $\wt{\Ss}\times_B \wt{\Ss}\to \Ss\times_B \Ss$).
        For each $b$ in the open $B^\prime$, the restriction of this cycle to the fiber over $b$ is rationally trivial. Applying Lemma \ref{vois} below, it follows that the restriction of this cycle to {\em any\/} fibre is rationally trivial.  
          Next, given any $b_0\in B$, the moving lemma ensures that the divisor $\YY\subset \wt{\Ss}$ appearing in the construction may be chosen in general position with respect to $\wt{S}_{b_0}$; then, the above argument implies that
     \[  A^2_{hom} (\wt{S}_{b_0})_{\QQ}\cong A^2_{hom}(\wt{M}_{b_0})_{\QQ}  \ .\]

  \begin{lemma}\label{vois} Let $M\to B$ be a projective fibration, where $B$ is a smooth variety of dimension $r$. Let $\Gamma\in A_i(M)_{\QQ}$. The set of points $b\in B$ such that $\Gamma\vert_{M_b}=0$ in $A_{i-r}(M_b)_{\QQ}$ is  a countable union of closed algebraic subsets of $B$.
  \end{lemma}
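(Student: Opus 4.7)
The plan is a standard ``countable union of closed'' argument based on the existence of relative Hilbert schemes (compare \cite[Lemma 3.2]{Vo}). The idea is to parametrize every possible witness of rational triviality of $\Gamma|_{M_b}$ by countably many projective families over $B$; properness of these families then converts ``the witness exists'' into a closed condition on $b$.

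The starting point is the standard characterization: a cycle $\alpha\in Z_k(X)_{\QQ}$ is rationally equivalent to zero iff there exist finitely many $(k+1)$--dimensional integral subvarieties $W_\ell\subset X\times\PP^1$ dominating $\PP^1$, and rationals $q_\ell$, with
\[
\alpha \;=\; \sum_\ell q_\ell\bigl((W_\ell)_0-(W_\ell)_\infty\bigr) \ \ \hbox{in}\ Z_k(X)_{\QQ}.
\]
Applied to $X=M_b$ and $\alpha=\Gamma|_{M_b}$, this reduces the problem to parametrizing the $W_\ell$'s in families. I would therefore introduce the relative Hilbert scheme $\mathcal H\to B$ of closed subschemes of $M\times_B\PP^1$ of fiber--dimension $i-r+1$, which decomposes into countably many irreducible components $\mathcal H=\coprod_{\alpha\in A}\mathcal H_\alpha$, each projective over $B$, and each carrying a universal subscheme $\mathcal W_\alpha\subset\mathcal H_\alpha\times_B(M\times_B\PP^1)$.

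For any finite datum $D=\{(q_1,\alpha_1),\dots,(q_m,\alpha_m)\}$ with $q_\ell\in\QQ$ and $\alpha_\ell\in A$, form the fibre product $\mathcal H_D:=\mathcal H_{\alpha_1}\times_B\cdots\times_B\mathcal H_{\alpha_m}$, still projective over $B$, equipped with the universal relative cycle
\[
\Xi_D \;:=\; \sum_\ell q_\ell \bigl((\mathcal W_{\alpha_\ell})_0-(\mathcal W_{\alpha_\ell})_\infty\bigr).
\]
The locus $Z_D\subset\mathcal H_D$ where the fibrewise equality $\Xi_D=\Gamma|_{M_b}$ holds is a closed subset, so its image $\mathrm{pr}(Z_D)\subset B$ under the proper projection $\mathcal H_D\to B$ is closed. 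The lemma then follows, because
\[
\{\,b\in B\ :\ \Gamma|_{M_b}=0\ \hbox{in}\ A_{i-r}(M_b)_{\QQ}\,\} \;=\; \bigcup_{D}\mathrm{pr}(Z_D),
\]
the right--hand side being a countable union (over the countable set of admissible data $D$) of closed algebraic subsets of $B$.

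The main technical point to verify is that $\Xi_D=\Gamma|_{M_b}$ really is a closed condition on $\mathcal H_D$. This is handled by splitting the $q_\ell$ into positive and negative parts and thereby reducing to comparing two effective relative cycles of fixed numerical type; equality of such effective cycles defines a closed condition in the corresponding Chow variety. A minor subsidiary issue is the interpretation of $\Gamma|_{M_b}$ via the refined Gysin map \cite{F} when $\Gamma$ is not flat over $B$, but this is resolved by stratifying $B$ so that each irreducible component of $\Gamma$ becomes flat, and then taking the countable union over the stratification (the result is still a countable union of closed subsets of $B$).
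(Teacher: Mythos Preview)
Your approach is correct and is precisely the Hilbert scheme argument the paper alludes to: the paper's own proof consists of the single remark that \cite[Lemma 3.2]{Vo} is proved by a Hilbert scheme argument which still works when $M$ is singular, and you have written out exactly that argument in detail. One small point: in your final paragraph, the stratification produces closed subsets of the strata, which are only locally closed in $B$; to conclude they can be replaced by their closures in $B$ you implicitly use that rational triviality of $\Gamma\vert_{M_b}$ is preserved under specialization (via the specialization map on Chow groups), which is standard but worth making explicit.
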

  
  \begin{proof} Usually this is stated for $M$ smooth, for instance in \cite[Lemma 3.2]{Vo}. However, as the proof is just a Hilbert schemes argument, this still goes through for $M$ singular.
  \end{proof}

      Let us now wrap up the proof of Theorem \ref{main}: suppose $S$ is a Todorov surface with fundamental invariants $(1,10)$, and $P$ is a resolution of singularities of $S/\iota$. The canonical model of $S$ is an $S_b$ for some $b\in B$ (Corollary \ref{family}). After passing to a blow--up $\wt{S}$ of $S$, we get a diagram of surfaces
      \[ \begin{array}[c]{ccccc}
          \wt{S} &\to& S_b & \leftarrow & \wt{S}_b\\
          \downarrow && \downarrow && \downarrow\\
          P &\to& S_b/\iota_b &\leftarrow& \wt{M}_b\\
          \end{array}\]
         where horizontal arrows are birational morphisms (Lemma \ref{desing}), and surfaces in the left and right columns are smooth. We conclude using the commutative diagram
         \[  \begin{array}[c]{ccccc}
               A^2_{hom}(S)_{\QQ}&\xrightarrow{\cong}&A^2_{hom}(\wt{S})_{\QQ}  &\xrightarrow{\cong}& A^2_{hom}(\wt{S}_b)_{\QQ}\\
               &&\downarrow & &\ \ \downarrow{\cong}\\
              && A^2_{hom}(P)_{\QQ}&\xrightarrow{\cong}& A^2_{hom}(\wt{M}_b)_{\QQ}\\
               \end{array}\]
      (here horizontal arrows are isomorphisms because $A^2_{hom}$ is a birational invariant for smooth surfaces, and the right vertical arrow is an isomorphism as we have shown above).

The above argument relies on the following key result, the proof of which is postponed to the next section:

\begin{proposition}\label{key} Let 
  $  \VV \to B$
  be the family of weighted complete intersection surfaces, and let $\Ss\to B$ be the family of Todorov surfaces as in Corollary \ref{family}. Suppose $B$ is small enough for the morphism $\VV\to B$ to be smooth. Then
  \[ A^{2}_{hom}(\Ss\times_B \Ss)_{\QQ} =  A^2_{hom}(\VV\times_B \VV)_{\QQ} =0\ .\]
  \end{proposition}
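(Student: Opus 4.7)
The plan is to reduce the $\Ss$--statement to the $\VV$--statement and then to prove $A^2_{hom}(\VV\times_B\VV)_{\QQ}=0$ via a relative projective bundle argument. For the reduction, observe that $\Ss=\VV/\tau$ with $\tau$ fixed--point--free on $\VV$, so the induced morphism $\VV\times_B\VV\to\Ss\times_B\Ss$ is finite étale of degree $4$ and $A^*(\Ss\times_B\Ss)_{\QQ}$ is the $(\tau\times\tau)$--invariant subspace of $A^*(\VV\times_B\VV)_{\QQ}$; it suffices to establish the vanishing on $\VV\times_B\VV$.

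Consider the projection $\pi\colon\VV\times_B\VV\to\PP\times\PP$, $(x,y,b)\mapsto(x,y)$. Writing $\bar B=\PPP_1\times\PPP_2$ for the product of the two $\PP^r$'s parametrizing the $F$'s and the $G$'s, the fibre of $\pi$ over $(x,y)$ is cut out in $B\subset\bar B$ by the four linear equations $F_b(x)=F_b(y)=G_b(x)=G_b(y)=0$. Let $U\subset\PP\times\PP$ be the open locus where both evaluation maps $[F_b]\mapsto (F_b(x),F_b(y))$ and $[G_b]\mapsto (G_b(x),G_b(y))$ have rank $2$. Over $U$ the morphism $\pi^{-1}(U)\to U$ is an open subset of a fibre product of two honest projective bundles of relative dimension $r-2$, and the projective bundle formula (combined with excision for the open inclusion $B\hookrightarrow\bar B$) shows that $A^*(\pi^{-1}(U))_{\QQ}$ is generated over $A^*(U)_{\QQ}$ by the relative hyperplane classes. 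Since $\PP$ is a weighted projective space, $A^*(\PP\times\PP)_{\QQ}$ is generated by the two hyperplane classes, hence $A^*(U)_{\QQ}$ consists entirely of algebraic classes and $A^i_{hom}(\pi^{-1}(U))_{\QQ}=0$ for every $i$.

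To conclude, I would stratify $Z=(\PP\times\PP)\setminus U$ and control $\pi^{-1}(Z)$ via the localization exact sequence
\[A_*(\pi^{-1}(Z))_{\QQ}\to A_*(\VV\times_B\VV)_{\QQ}\to A_*(\pi^{-1}(U))_{\QQ}\to 0.\]
Because $F$ is independent of $z_4$ and $G$ of $z_3$, $Z$ decomposes into the diagonal $\Delta_\PP$, the locus $\{(x,y):x,y\text{ agree in }w,x_1,x_2,z_3\}$ where the $F$--evaluations become proportional, its $G$--analogue, together with the base--point loci $\{[0{:}0{:}0{:}1{:}0]\}\times\PP$ and $\PP\times\{[0{:}0{:}0{:}1{:}0]\}$ and their $G$--analogues coming from Lemma \ref{one}. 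Over each smooth stratum $Z_\alpha$ the restriction of $\pi$ is again an open subset of a projective bundle of slightly higher rank, so the projective bundle argument repeats and gives $A^i_{hom}(\pi^{-1}(Z_\alpha))_{\QQ}=0$. Inducting on strata and pushing forward yields $A^2_{hom}(\VV\times_B\VV)_{\QQ}=0$.

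The principal obstacle will be the stratum analysis itself: I need to identify all loci where the evaluation maps drop rank, verify that each stratum is smooth (or admits a harmless resolution), and check that the restricted $\pi$ genuinely remains a projective bundle rather than merely a linear fibration, so that the formula applies uniformly. An alternative that sidesteps much of this bookkeeping is a Bloch--Srinivas--type decomposition of the diagonal on $\VV\times_B\VV$, exploiting the rationality of the generic fibre of $\pi$ together with the triviality of the Chow groups of $\PP\times\PP$ to force the desired vanishing directly.
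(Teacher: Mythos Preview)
Your overall architecture (reduce to $\VV$, project to $\PP\times\PP$, stratify by the rank of the evaluation maps, identify projective--bundle fibres) is essentially the same as the paper's. The genuine gap is the inductive step. From the localization sequence
\[ A_\ast(\pi^{-1}(Z))_{\QQ}\to A_\ast(\VV\times_B\VV)_{\QQ}\to A_\ast(\pi^{-1}(U))_{\QQ}\to 0 \]
you cannot conclude that $A^2_{hom}$ vanishes on the middle term just from knowing it vanishes on the two outer ones: if $a\in A^2_{hom}(\VV\times_B\VV)_{\QQ}$ restricts to $0$ on $\pi^{-1}(U)$, it lifts to some $\beta$ on $\pi^{-1}(Z)$, but there is no reason $\beta$ is homologically trivial on $\pi^{-1}(Z)$; its class lies in the image of the connecting map $H_{\ast+1}(\pi^{-1}(U))\to H_\ast(\pi^{-1}(Z))$ in Borel--Moore homology. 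The paper handles this precisely by upgrading ``$A^\ast_{hom}=0$'' to Totaro's \emph{strong property} (surjectivity of $A_i(-,1)_{\QQ}\to\gr^W_{-2i}H_{2i+1}$), which does glue along open/closed decompositions (Lemma~\ref{local}). Concretely, the paper compactifies to a projective incidence variety $M\subset\bar B\times\wt{\PP\times\PP}$ (blowing up the diagonal so that the fibre dimension of $\pi$ is constant on each stratum), proves $M$ has the strong property stratum by stratum, and then passes back to the open $\VV\times_B\VV$ via Lefschetz $(1,1)$ on the boundary divisor---this last step is why codimension $2$ is special.

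A secondary issue is that your stratification is too coarse. Because the defining polynomials involve $z_3^2,z_4^2,w^2,w^4$, the locus where the $F$--evaluations (resp.\ $G$--evaluations) become proportional is not ``$x,y$ agree in $w,x_1,x_2,z_3$'' but the union of the four ``partial diagonals'' $\Delta_{4,\pm,\pm}$ (resp.\ $\Delta_{3,\pm,\pm}$) where the coordinates agree \emph{up to sign} in $w$ and in $z_j$; the paper carries out this bookkeeping in detail. Your Bloch--Srinivas alternative is not obviously easier: the generic fibre of $\pi$ is an \emph{open} in a product of projective spaces (you are over $B$, not $\bar B$), so the rationality input alone does not give the needed decomposition without again controlling what happens over the jumping loci.
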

  
\end{proof}

We now state a few corollaries of Theorem \ref{main}:

\begin{corollary} Let $S$ be a Todorov surface with $K^2_S=2$ and $\pi_1(S)=\ZZ/2\ZZ$. Then the generalized Hodge conjecture is true for the sub--Hodge structure
  \[ \wedge^2 H^2(S)\ \subset\ H^4(S\times S)\ .\]
  The Hodge conjecture is true for $(2,2)$--classes in $\wedge^2 H^2(S)$.
  \end{corollary}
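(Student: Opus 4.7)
The plan is to decompose $\wedge^2 H^2(S)$ via the orthogonal splitting $H^2(S,\QQ) = NS(S)_{\QQ} \oplus H^2_{tr}(S)$ into
\[
\wedge^2 NS(S)_{\QQ} \ \oplus\ \bigl(NS(S)_{\QQ} \otimes H^2_{tr}(S)\bigr) \ \oplus\ \wedge^2 H^2_{tr}(S),
\]
dispose of the first two summands for free, and reduce the third to the analogous statement for $P$. The first summand consists of genuinely algebraic $(2,2)$-classes represented by K\"unneth combinations $\alpha \times \beta - \beta \times \alpha$ with $\alpha,\beta$ divisor classes on $S$, so both GHC and HC are immediate. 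The middle summand has no rational $(2,2)$-Hodge classes --- any such would force a rational $(1,1)$-class inside $H^2_{tr}(S)$, which by Lefschetz $(1,1)$ would lie in $NS(S)_{\QQ}\cap H^2_{tr}(S)=0$ --- and each generator $\alpha\wedge\beta$ is realized by a cycle supported on the divisor $D_\alpha\times S \cup S\times D_\alpha$, giving coniveau one.

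For the remaining piece $\wedge^2 H^2_{tr}(S)$ I would apply Theorem \ref{main2}: the motivic isomorphism $t_2(S)\cong t_2(P)$ is induced by an algebraic correspondence $\Gamma$, and $\Gamma\otimes\Gamma$ realizes $\wedge^2 H^2_{tr}(S)\cong \wedge^2 H^2_{tr}(P)$ as an isomorphism of sub-Hodge structures of $H^4$; since correspondences preserve the coniveau filtration $N^\bullet$, GHC for $S$ reduces to GHC for $P$. On the K3 surface $P$, which by Theorem \ref{rito} is a double cover of $\PP^2$ branched along two smooth cubics, Voisin's conjecture is known \cite[Theorem 3.4]{V9}, i.e., $A^4(\wedge^2 t_2(P))_{\QQ}=0$; combined with finite-dimensionality of the motive of $P$ (obtained via the spread-of-cycles method of Theorem \ref{main}, and which is the content of Corollary \ref{finite}), the motivic Bloch--Srinivas principle yields $\wedge^2 H^2_{tr}(P)\subset N^1 H^4(P\times P)$, completing GHC.

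For the Hodge conjecture portion, GHC gives an inclusion $\wedge^2 H^2(S)\subset \ima\bigl(H^2(\tilde D,\QQ)(-1)\to H^4(S\times S,\QQ)\bigr)$ for some resolved divisor $\tilde D\subset S\times S$. Deligne's semisimplicity theorem for polarizable rational Hodge structures then produces, from any $(2,2)$-Hodge class on $S\times S$ lying in this image, a $(1,1)$-Hodge class on $\tilde D$ mapping to it; this lift is algebraic by Lefschetz $(1,1)$, and its Gysin pushforward remains an algebraic cycle class on $S\times S$. The main obstacle I anticipate is the passage from the Chow-theoretic vanishing $A^4(\wedge^2 t_2(P))_{\QQ}=0$ to the cohomological coniveau statement for $P$: this Bloch--Srinivas-type implication hinges on finite-dimensionality of $t_2(P)$, which is the most delicate ingredient to supply and must be handled in tandem with Corollary \ref{finite}.
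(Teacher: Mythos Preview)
Your decomposition of $\wedge^2 H^2(S)$ and the handling of the first two summands are fine. The problem is in the third summand, where you make an essential appeal to finite--dimensionality of the motive of $P$ and attribute this to Corollary~\ref{finite}. That corollary does \emph{not} establish finite--dimensionality for an arbitrary Todorov surface with $K_S^2=2$ and $\pi_1(S)=\ZZ/2\ZZ$: it requires one of the extra hypotheses (\rom1)--(\rom3) (large Picard number, Kummer, or Shioda--Inose structure on the associated $K3$). For a generic member of the family none of these is known, so as written your argument has a genuine gap.

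The paper's proof avoids this entirely and is much shorter. Since Corollary~\ref{true} already gives Voisin's conjecture for $S$ itself, you have the Chow--theoretic vanishing $A^4\bigl(\wedge^2 t_2(S)\bigr)_{\QQ}=0$ on the nose; there is no need to transfer anything to $P$ via Theorem~\ref{main2}. From here, the \emph{classical} Bloch--Srinivas decomposition--of--the--diagonal argument \cite{BS} (applied to the fourfold $S\times S$ and the antisymmetrising correspondence $\Pi=\tfrac{1}{2}(\Delta_{S\times S}-\Gamma_\sigma)\circ(\pi_2^{tr}\times\pi_2^{tr})$, which acts as zero on $A_0(S\times S)_{\QQ}$) produces a rational equivalence $\Pi\equiv$ (cycle supported on $(S\times S)\times D$) for a divisor $D\subset S\times S$. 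This immediately gives $\wedge^2 H^2_{tr}(S)\subset N^1 H^4(S\times S)$, and your Lefschetz $(1,1)$ argument on $\wt{D}$ then finishes the Hodge conjecture part. No Kimura--O'Sullivan finite--dimensionality is required anywhere; this is precisely the observation already made in \cite{V9}, which the paper simply cites.
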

  
  \begin{proof} As already noted in \cite{V9}, this follows from Corollary \ref{true} using the Bloch--Srinivas method \cite{BS}.
  \end{proof}
  
\begin{corollary}\label{intersection} Let $S$ be a Todorov surface with $K^2_S=2$ and $\pi_1(S)=\ZZ/2\ZZ$. Let $\iota$ be the involution such that $S/\iota$ is birational to a $K3$ surface, and let
$A^1(S)^\iota$ denote the $\iota$--invariant part of $A^1(S)$. Then
  \[ \ima \Bigl( A^1S\otimes A^1(S)^\iota \xrightarrow{\cdot} A^2S \Bigr) \]
  has dimension $1$.
  \end{corollary}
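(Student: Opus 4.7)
The plan is to leverage Theorem \ref{main2}, which identifies the transcendental motives of $S$ and its associated $K3$ surface $P$, together with the Beauville--Voisin theorem on $P$. The resulting isomorphism $A^2_{hom}(S)_{\QQ}\cong A^2_{hom}(P)_{\QQ}$ from Theorem \ref{main} gives a direct sum decomposition
\[ A^2(S)_{\QQ} \,=\, \QQ \cdot o_S \,\oplus\, A^2_{hom}(S)_{\QQ}, \]
where $o_S$ is the class corresponding to the Beauville--Voisin class $o_P \in A^2(P)_{\QQ}$. The canonical class $K_S$ is $\iota$--invariant (it is preserved by any automorphism) and $K_S^2=2\ne 0$, so the image of the intersection pairing already has rank at least $1$. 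It therefore suffices to prove
\[ D_1 \cdot D_2 \,=\, \deg(D_1 \cdot D_2) \cdot o_S \quad \hbox{in } A^2(S)_{\QQ} \]
for every $D_1 \in A^1(S)_{\QQ}$ and every $D_2 \in A^1(S)^\iota_{\QQ}$.

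Next I would descend $D_2$ through the quotient $f\colon S \to M := S/\iota$. Since $D_2$ is $\iota$--invariant and $f^\ast f_\ast = 1+\iota^\ast$, rationally $D_2 = f^\ast \bar D_2$ with $\bar D_2 := \tfrac12 f_\ast D_2 \in A^1(M)_{\QQ}$. The projection formula then yields
\[ f_\ast(D_1 \cdot D_2) \,=\, f_\ast D_1 \cdot \bar D_2 \quad \hbox{in } A^2(M)_{\QQ}. \]
Write $\sigma\colon P \to M$ for the minimal resolution and set $E_1 := \sigma^\ast f_\ast D_1$, $E_2 := \sigma^\ast \bar D_2 \in A^1(P)_{\QQ}$. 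The Beauville--Voisin theorem on the $K3$ surface $P$ then gives
\[ E_1 \cdot E_2 \,=\, \deg(E_1 \cdot E_2) \cdot o_P \,=\, \deg(D_1 \cdot D_2) \cdot o_P \quad \hbox{in } A^2(P)_{\QQ}. \]

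I would then identify the isomorphism $\Phi\colon A^2_{hom}(S)_{\QQ} \xrightarrow{\sim} A^2_{hom}(P)_{\QQ}$ of Theorem \ref{main} with (a nonzero rational scalar multiple of) the composite $\sigma^\ast \circ f_\ast$. Any exceptional contributions are supported on the finitely many points of $M$ over which $\sigma$ is nontrivial, and these vanish in $A^2_{hom}$ because all points on a rational tree are rationally equivalent. Applying $\Phi$ to the homologically trivial class $D_1 \cdot D_2 - \deg(D_1 \cdot D_2) \cdot o_S$ and using the computation above, we obtain $E_1 \cdot E_2 - \deg(D_1 \cdot D_2) \cdot o_P = 0$ in $A^2_{hom}(P)_{\QQ}$; by injectivity of $\Phi$, the class $D_1 \cdot D_2 - \deg(D_1 \cdot D_2) \cdot o_S$ already vanishes in $A^2_{hom}(S)_{\QQ}$, which is exactly what we want.

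The main obstacle I expect is the precise identification of $\Phi$ with $\sigma^\ast \circ f_\ast$ up to scalar: one has to trace through the proof of Theorem \ref{main} and verify that the natural correspondence constructed there acts, on the homologically trivial summand, as pushing down through $f$ and pulling up through $\sigma$, with exceptional contributions confined to the non--hom part. Once that identification is pinned down, the remainder of the argument is routine projection--formula bookkeeping combined with Beauville--Voisin.
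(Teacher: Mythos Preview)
Your proposal is correct and follows essentially the same line as the paper's proof: descend the $\iota$--invariant divisor to the quotient, apply the projection formula, invoke Beauville--Voisin, and use Theorem~\ref{main} to pull the conclusion back up to $S$.

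The paper streamlines one step that you flag as an obstacle. Rather than passing to the resolution $P$ and worrying about identifying the abstract isomorphism $\Phi$ with $\sigma^\ast\circ f_\ast$, the paper works directly on the singular quotient $M=S/\iota$: since $M$ is a $K3$ surface with only rational double points, $A_0(M)_{\QQ}\cong A_0(P)_{\QQ}$ and Beauville--Voisin already gives a distinguished class $e\in A^2(M)_{\QQ}$ with $A^1(M)_{\QQ}\cdot A^1(M)_{\QQ}\subset\QQ\cdot e$. Theorem~\ref{main} then says simply that $p_\ast\colon A^2(S)_{\QQ}\to A^2(M)_{\QQ}$ is an isomorphism, so $p_\ast(D_1\cdot D_2)=p_\ast(D_1)\cdot\bar D_2\in\QQ\cdot e$ immediately gives $D_1\cdot D_2\in\QQ\cdot e_S$ for any $e_S$ with $p_\ast(e_S)=e$. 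No tracing through the construction of $\Phi$ is needed.
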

  
  \begin{proof} 
  In view of Rojtman's theorem, it suffices to prove the statement with rational coefficients. Let $p\colon S\to S/\iota$ denote the projection. Since $S/\iota$ is birational to a $K3$ surface (in other words, $S/\iota$ is a ``$K3$ surface with rational double points'', in the language of \cite{Mor3}), there is a distinguished $0$--cycle $e\in A^2(S/\iota)$, with the property that
    \[ \ima \Bigl( A^1(S/\iota)_{\QQ}\otimes A^1(S/\iota)_{\QQ} \xrightarrow{\cdot} A^2(S/\iota)_{\QQ}\Bigr)= \QQ\cdot e \]
  \cite{BV}.  
  
  Now given two divisors $D\in A^1(S)_{\QQ}$ and $D^\prime\in A^1(S)_{\QQ}^\iota$, we can write $D^\prime= p^\ast(F^\prime)$ for some $F^\prime\in A^1(S/\iota)_{\QQ}$. Using the projection formula, we find that
    \[ p_\ast ( D\cdot D^\prime)=p_\ast \bigl(D\cdot p^\ast(F^\prime)\bigr)= p_\ast(D)\cdot F^\prime=\hbox{deg}(p_\ast(D)\cdot F^\prime) e\ \ \hbox{in}\ A^2(S/\iota)_{\QQ}\ .\]
    Let $e_S\in A^2(S)$ be any $0$--cycle mapping to $e\in A^2(S/\iota)$. Then (as $A^2(S)_{\QQ}\to A^2(S/\iota)_{\QQ}$ is an isomorphism by Theorem \ref{main}), we have
    \[  D\cdot D^\prime = \hbox{deg}(p_\ast(D)\cdot F^\prime) e_S\ \ \hbox{in}\ A^2 (S)_{\QQ}\ ;\]
    that is, $e_S$ can be considered a ``distinguished $0$--cycle'' for the intersection on $S$.
    \end{proof}
  
 \begin{remark} An equivalent formulation of Corollary \ref{intersection} is as follows: for $S$ a surface as in Corollary \ref{intersection}, there exists $e_S$ such that for all divisors $D_1, D_2\in A^1(S)$, we have
   \[ D_1\cdot \bigl(D_2+\iota_\ast(D_2)\bigr) = c e_S\ \ \hbox{in}\ A^2 S\ ,\]
   for some $c\in\ZZ$.
   \end{remark}

\begin{remark} A result similar to (but stronger than) Theorem \ref{main} is proven by Voisin for $K3$ surfaces. Voisin proves \cite{V11} that if $X$ is any $K3$ surface, and $\iota$ is a symplectic involution of $X$ then $A^2(X)_{}=A^2(X)^\iota_{}$. Our result Theorem \ref{main} is weaker than this, in the sense that we can not prove anything for an arbitrary symplectic involution on a surface $S$ as in Theorem \ref{main}; our proof only works if the involution extends to the whole family of Todorov surfaces with the given invariants.
\end{remark}

\begin{remark} Below we will prove (Theorem \ref{main2}) the motivic version of Theorem \ref{main} that was stated in the introduction. This motivic version is not necessary for the proof of Corollary \ref{true} (for which the statement of Theorem \ref{main} suffices), but it may have some independent interest.
\end{remark}

\section{Trivial Chow groups}
\label{seckey}

This section contains the proof of Proposition \ref{key}, which was a key result used in the preceding section. We rely on work of Totaro \cite{T}, which is recalled in subsection \ref{totsection}. Subsection \ref{proofsection} proves Proposition \ref{key}, by considering an appropriate stratification of $\wt{\PP\times\PP}$. Things work out fine, because ``everything is linear'' (i.e., all the strata, and all their intersections, look like affine spaces).

\subsection{Weak and strong property}
\label{totsection}

  \begin{definition}[Totaro \cite{T}] For any (not necessarily smooth) quasi--projective variety $X$, let $A_i(X,j)$ denote Bloch's higher Chow groups (these groups are sometimes written $A^{n-i}(X,j)$ or $CH^{n-i}(X,j)$, where $n=\dim X$). As explained in \cite[Section 4]{T}, the relation with algebraic $K$--theory ensures there are functorial cycle class maps
    \[  A_i(X,j)_{\QQ}\ \to\ \gr^W_{-2i} H_{2i+j}(X)\ ,\]
    compatible with long exact sequences (here $W$ denotes Deligne's weight filtration on Borel--Moore homology \cite{PS}).
    
    We say that $X$ has the {\em weak property\/} if the cycle class maps induce isomorphisms
    \[   A_i(X)_{\QQ}\ \xrightarrow{\cong}\ W_{-2i} H_{2i}(X)\]
    for all $i$.
    
    We say that $X$ has the {\em strong property\/} if $X$ has the weak property, and, in addition, the cycle class maps induce surjections
      \[  A_i(X,1)_{\QQ}\ \twoheadrightarrow\ \gr^W_{-2i} H_{2i+1}(X) \]
      for all $i$.
   \end{definition}

\begin{lemma}\label{local} Let $X$ be a quasi--projective variety, and $Y\subset X$ a closed subvariety with complement $U=X\setminus Y$. If $Y$ and $U$ have the strong  property, then so does $X$.
\end{lemma}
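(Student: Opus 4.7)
The plan is to place Bloch's localization exact sequence for higher Chow groups alongside the Borel--Moore long exact sequence associated to the pair $(X,Y)$ with open complement $U$, and then to run a pair of diagram chases on the resulting commutative ladder connected by Totaro's functorial cycle class maps.

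First I would write down the two sequences. The localization sequence gives
  \[ A_i(Y,1)_{\QQ} \to A_i(X,1)_{\QQ} \to A_i(U,1)_{\QQ} \to A_i(Y)_{\QQ} \to A_i(X)_{\QQ} \to A_i(U)_{\QQ} \to 0 \ .\]
The Borel--Moore long exact sequence of $(X,Y)$ is a long exact sequence of mixed Hodge structures; by strictness of the weight filtration, applying the exact functor $\gr^W_{-2i}$ yields an exact sequence
  \[ \gr^W_{-2i} H_{2i+1}(Y) \to \gr^W_{-2i} H_{2i+1}(X) \to \gr^W_{-2i} H_{2i+1}(U) \to \gr^W_{-2i} H_{2i}(Y) \to \gr^W_{-2i} H_{2i}(X) \to \gr^W_{-2i} H_{2i}(U) \ .\]
By the compatibility stated at the top of the subsection, the cycle class maps assemble these into a commutative ladder. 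The hypothesis gives isomorphisms $A_i(Y)_{\QQ}\cong \gr^W_{-2i} H_{2i}(Y)$ and $A_i(U)_{\QQ}\cong \gr^W_{-2i} H_{2i}(U)$, together with surjections out of $A_i(Y,1)_{\QQ}$ and $A_i(U,1)_{\QQ}$ onto the relevant $\gr^W_{-2i} H_{2i+1}$.

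The weak property for $X$ follows from a standard five-lemma argument on the right half of the ladder. For surjectivity of $A_i(X)_{\QQ}\to \gr^W_{-2i}H_{2i}(X)$, lift a class in $\gr^W_{-2i}H_{2i}(X)$ first to $A_i(U)_{\QQ}$ via the iso for $U$, lift that to $A_i(X)_{\QQ}$, and correct the residual difference using the isomorphism for $Y$. For injectivity, take $\alpha\in A_i(X)_{\QQ}$ mapping to zero in $\gr^W_{-2i}H_{2i}(X)$; its restriction to $U$ vanishes by the $U$-iso, so $\alpha$ comes from $\tilde\alpha\in A_i(Y)_{\QQ}$; the cycle class of $\tilde\alpha$ lies in the image of $\gr^W_{-2i}H_{2i+1}(U)$, and the surjectivity of $A_i(U,1)_{\QQ}\twoheadrightarrow \gr^W_{-2i}H_{2i+1}(U)$ -- combined with the iso $A_i(Y)_{\QQ}\cong \gr^W_{-2i}H_{2i}(Y)$ -- forces $\tilde\alpha$ itself to be a boundary $\partial(\delta)$, so $\alpha=0$ in $A_i(X)_{\QQ}$.

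For the strong property, the diagram chase runs one step to the left. Given $\gamma\in \gr^W_{-2i}H_{2i+1}(X)$, its image $\gamma_U$ in $\gr^W_{-2i}H_{2i+1}(U)$ lifts via the strong property of $U$ to some $\delta\in A_i(U,1)_{\QQ}$. The cycle class of $\partial(\delta)\in A_i(Y)_{\QQ}$ equals the boundary of $\gamma_U$, which vanishes by exactness of the lower row; the already-established iso for $Y$ then gives $\partial(\delta)=0$ in $A_i(Y)_{\QQ}$, so $\delta$ lifts to $\tilde\delta\in A_i(X,1)_{\QQ}$. The difference between $\gamma$ and the cycle class of $\tilde\delta$ dies in $\gr^W_{-2i}H_{2i+1}(U)$, hence is pushed forward from $\gr^W_{-2i}H_{2i+1}(Y)$, and the strong property of $Y$ takes care of this final piece.

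The main obstacle is not conceptual but bookkeeping: one must be certain that $\gr^W_{-2i}$ applied to the Borel--Moore long exact sequence is exact. This rests on the fact that the connecting homomorphisms in that sequence are morphisms of mixed Hodge structures and that the weight filtration is strict under such morphisms, both of which are standard (and are the reason Totaro's cycle maps land in the indicated weight pieces in the first place). Once this is granted, the two diagram chases above go through mechanically, and the compatibility of Bloch's localization sequence with the Borel--Moore sequence under the cycle class maps (recalled by Totaro) is exactly what makes the ladder commute.
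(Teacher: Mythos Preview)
Your argument is correct and follows exactly the same route as the paper: the paper sets up the identical commutative ladder coming from Bloch's localization sequence and the weight-graded Borel--Moore sequence, and then simply writes ``a diagram chase reveals\ldots'' for each half, whereas you have spelled out those chases explicitly. Your remark on strictness of the weight filtration is the point that justifies exactness of the bottom row, which the paper takes for granted.
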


\begin{proof} This is the same argument as \cite[Lemma 7]{T}, which is a slightly different statement. As in loc. cit., using the localization property of higher Chow groups \cite{B3}, \cite{Lev}, one finds a commutative diagram with exact rows
  \[ \begin{array}[c]{cccccccc}
                                                       A_{i}(U,1)_{\QQ}&\to& A_i(Y)_{\QQ}  &\to&      
                                                                                         A_i(X)_{\QQ} &\to& A_i(U)_{\QQ} & \to 0\\
                                            \downarrow && \downarrow && \downarrow && \downarrow & \\
                                                                       \gr^W_{-2i}H_{2i+1}(U) &\to& \gr^W_{-2i} H_{2i}(Y) &\to& \gr^W_{-2i} H_{2i}(X) &\to& \gr^W_{-2i} H_{2i}(U)& \to 0 \\
 \end{array} \]    
   A diagram chase reveals that under the assumptions of the lemma, the one but last vertical arrow is an isomorphism.
   
   Continuing these long exact sequences to the left, there is a commutative diagram with exact rows
    \[ \begin{array}[c]{cccccccc}


      A_{i}(Y,1)_{\QQ}&\to&  A_{i}(X,1)_{\QQ}&\to&  A_{i}(U,1)_{\QQ}&\to& A_i(Y)_{\QQ} &\to\\  
       \downarrow && \downarrow && \downarrow && \ \ \ \downarrow{\cong} & \\
      \gr^W_{-2i} H_{2i+1}(Y)&\to&      \gr^W_{-2i} H_{2i+1}(X)&\to&      \gr^W_{-2i} H_{2i+1}(U) &\to& \gr^W_{-2i} H_{2i}(Y) &\to\\
       \end{array}\]
      
      Doing another diagram chase, one learns that the second vertical arrow is a surjection.       
          \end{proof}

  \begin{corollary}\label{linear} Let $X$ be a quasi--projective variety that admits a stratification such that each stratum is of the form $\A^k\setminus L$, where $L$ is a finite union of linearly embedded affine subspaces. Then $X$ has the strong property.
  \end{corollary}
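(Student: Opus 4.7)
The plan is a two-layer induction, both layers powered by Lemma \ref{local}. On the outer layer, I will induct on the number of strata in the stratification of $X$: if $X$ admits a stratification $X = S_1 \sqcup \cdots \sqcup S_n$ with each $S_i$ of the stated form, pick a closed stratum $S_i =: Y$, let $U := X \setminus Y$ (which inherits a stratification with one fewer stratum), and apply Lemma \ref{local}. By induction it suffices to show that a single stratum $\A^k \setminus L$, with $L$ a finite union of linearly embedded affine subspaces of $\A^k$, has the strong property.

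To handle $\A^k \setminus L$, I apply Lemma \ref{local} once more to the pair $L \subset \A^k$ with complement $\A^k \setminus L$. The affine space $\A^k$ itself has the strong property: by homotopy invariance of higher Chow groups, $A_i(\A^k,j)_\QQ$ vanishes except for the expected generator in top dimension, and the Borel--Moore homology of $\A^k$ is concentrated in degree $2k$ of weight $-2k$; the cycle class map is visibly an isomorphism (and the surjectivity onto $\gr^W_{-2i} H_{2i+1}$ is trivial because that target vanishes). Hence, assuming $L$ has the strong property, so does $\A^k \setminus L$.

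The remaining point is therefore the inner induction: $L$, a finite union $L_1 \cup \cdots \cup L_r$ of linearly embedded affine subspaces of $\A^k$, has the strong property. I induct on $r$. Write $L = L' \cup L_r$ with $L' = L_1 \cup \cdots \cup L_{r-1}$; then $L_r$ is closed in $L$ with complement $L \setminus L_r = L' \setminus (L' \cap L_r)$. Crucially, $L' \cap L_r = (L_1 \cap L_r) \cup \cdots \cup (L_{r-1} \cap L_r)$ is again a finite union of linearly embedded affine subspaces, this time inside $L_r$, which is itself an affine space. So by the outer argument applied to the single-stratum case $L_r \setminus (L'\cap L_r)$, and by the inductive hypothesis applied to $L'$ (fewer components) and to $L' \cap L_r$ (union in a lower ambient dimension, handled by a nested induction on $k$), both the closed piece $L_r$ and the open piece $L \setminus L_r$ enjoy the strong property; one more application of Lemma \ref{local} finishes the inner induction.

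The main obstacle — the only nontrivial input — is the base case that $\A^k$ itself satisfies the strong property, which reduces to the standard computation of Bloch's higher Chow groups of affine space together with the compatibility of the cycle class map with Deligne's weight filtration recalled in \cite[\S4]{T}. Once this is in hand, the whole argument is purely combinatorial via Lemma \ref{local}, exactly parallel to Totaro's Lemma 8 in \cite{T}.
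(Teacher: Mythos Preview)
Your outer induction on the number of strata, reducing to a single stratum $\A^k\setminus L$, is exactly the paper's approach. But there is a genuine gap: you repeatedly invoke Lemma~\ref{local} in the wrong direction. Lemma~\ref{local} says that if the \emph{closed} piece $Y$ and the \emph{open} piece $U$ are both strong, then the total $X$ is strong. To pass from $\A^k$ strong and $L$ strong to $\A^k\setminus L$ strong you need the opposite implication (total and closed strong $\Rightarrow$ open strong), which is not what Lemma~\ref{local} states. The same misapplication recurs in your inner induction when you deduce that $L'\setminus(L'\cap L_r)$ is strong from $L'$ and $L'\cap L_r$ being strong.

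The implication you need is true---in fact it only requires the \emph{weak} property of the closed piece---and follows from the same pair of long exact sequences by a different diagram chase; the paper invokes it as ``a diagram chase as in Lemma~\ref{local} (or directly applying \cite[Lemma~6]{T})''. Once you supply this missing statement, your argument goes through. That said, your inner induction then becomes superfluous: the paper simply asserts that $L$, being a finite union of affine subspaces, has the weak property (immediate from the localization sequence for ordinary Chow groups), and this already suffices to conclude $\A^k\setminus L$ is strong. So the paper's route is shorter: weak property of $L$ plus strong property of $\A^k$ gives the strong property of each stratum, and then Lemma~\ref{local} (now in the correct direction) handles the stratification of $X$.
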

  
  \begin{proof} Affine space has the strong property (this is the homotopy invariance for higher Chow groups). The subvariety $L$ has the weak property. Doing a diagram chase as in lemma \ref{local} (or directly applying \cite[Lemma 6]{T}), it follows that the variety
  $\A^k\setminus L$  has the strong property. The corollary now follows from lemma \ref{local}.
  \end{proof}
  
 \begin{lemma}\label{projbundle} Let $X$ be a quasi--projective variety with the strong property. Let $Y\to X$ be a projective bundle. Then $Y$ has the strong property.
 \end{lemma}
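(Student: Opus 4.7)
The plan is to derive this from Bloch's projective bundle formula applied to both the higher Chow groups and the weight-graded Borel--Moore homology, and then check that these two decompositions are interchanged by the cycle class maps.

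Let $\pi\colon Y\to X$ be a projective bundle of relative dimension $r-1$, with tautological class $\xi=c_1(\OO_Y(1))$. For Bloch's higher Chow groups with $\QQ$--coefficients, the map
\[ \bigoplus_{\ell=0}^{r-1} A_{i-\ell}(X,j)_{\QQ}\ \longrightarrow\ A_i(Y,j)_{\QQ}\ , \qquad (a_\ell)\ \mapsto\ \sum_\ell \xi^\ell\cdot \pi^\ast a_\ell\ , \]
is an isomorphism; this projective bundle formula holds for an arbitrary quasi--projective base, as it follows by induction on strata from the localization and homotopy properties of \cite{B3}, \cite{Lev} already invoked in the proof of Lemma \ref{local}. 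The analogous projective bundle formula for Borel--Moore homology, combined with the fact that $\xi$ is of pure Hodge type $(1,1)$ (so cap product with $\xi$ is of type $(1,1)$ and Gysin pullback $\pi^\ast$ is of type $(-(r-1),-(r-1))$), produces a matching decomposition of the relevant weight-graded pieces,
\[ \bigoplus_{\ell=0}^{r-1} \gr^W_{-2(i-\ell)} H_{2(i-\ell)+j}(X)\ \xrightarrow{\cong}\ \gr^W_{-2i} H_{2i+j}(Y)\ , \]
in which the $\ell$--th summand is represented geometrically by $\xi^\ell\cdot\pi^\ast(-)$.

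Because the cycle class map is multiplicative and commutes with flat pullback, the two decompositions fit into a commutative square with the Totaro cycle class maps, summand by summand. By hypothesis $X$ has the strong property, so each summand on the left induces an isomorphism when $j=0$ and a surjection when $j=1$; taking direct sums transports both properties to the right column, which is precisely the strong property for $Y$.

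The only point requiring care is the validity of Bloch's projective bundle formula for higher Chow groups on a possibly singular base $X$. With $\QQ$--coefficients this is standard and follows formally from the localization exact sequence already used in Lemma \ref{local}, so it is not a genuine obstacle; the compatibility of the two decompositions via cycle class maps is then just naturality of pullback and multiplicativity of cycle classes, and the verification of weights amounts to the bookkeeping indicated above.
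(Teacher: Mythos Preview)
Your proposal is correct and is exactly the approach the paper takes: the paper's proof is the single sentence ``This follows from the projective bundle formula for higher Chow groups \cite{B2}'', and your write--up simply unpacks what that citation entails (the decomposition on the higher Chow side, the matching decomposition on the weight--graded Borel--Moore side, and their compatibility under the cycle class map).
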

 
 \begin{proof} This follows from the projective bundle formula for higher Chow groups \cite{B2}.
  \end{proof}

 \subsection{Proof of Proposition \ref{key}}
\label{proofsection}

We now proceed to prove the key proposition:

\begin{nonumberingp}[(=Proposition \ref{key})] Let 
  $  \VV \to B$
  be the family of weighted complete intersection surfaces, and let $\Ss\to B$ be the family of Todorov surfaces as in Corollary \ref{family}. Let $B^\prime\subset B$ be an open such that the induced morphism $\VV^\prime\to B^\prime$ is smooth. Let $\Ss^\prime\to B^\prime$ denote the restriction of $\Ss$ to $B^\prime$. Then
  \[ \begin{split} 
   &A^2_{hom}(\Ss^\prime\times_{B^\prime} \Ss^\prime)_{\QQ}=0\ ,\\
     &A^2_{hom}(\VV^\prime\times_{B^\prime} \VV^\prime)_{\QQ}=0\ .\\
     \end{split}\]
  \end{nonumberingp}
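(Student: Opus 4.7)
The plan is to apply Totaro's machinery from Subsection \ref{totsection} to the relative self--product $\VV'\times_{B'}\VV'$. The payoff is that for a smooth variety $X$ possessing the strong property, the cycle class map $A^2(X)_{\QQ}\to H^4(X,\QQ)$ factors through an isomorphism $A^2(X)_{\QQ}\cong W_4H^4(X)\hookrightarrow H^4(X,\QQ)$ and is in particular injective, so $A^2_{hom}(X)_{\QQ}=0$. Since $\tau$ acts freely on $\VV$ with quotient $\Ss$, the natural map $\VV\times_B\VV\to\Ss\times_B\Ss$ is an \'etale $(\ZZ/2\ZZ)^2$--cover, and with rational coefficients the pull--back $A^\ast(\Ss'\times_{B'}\Ss')_{\QQ}\hookrightarrow A^\ast(\VV'\times_{B'}\VV')_{\QQ}$ is injective; hence it suffices to establish the strong property for $\VV'\times_{B'}\VV'$.

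To do so I pass to the projective closure and consider the projection
\[ \pi\colon\ \bar\VV\times_{\bar B}\bar\VV\ \to\ \PP\times\PP \]
induced by the two natural morphisms to $\PP$. The crucial feature, dictated by Corollary \ref{family}, is that $F_b(x)$ and $G_b(x)$ are \emph{linear} in $b=(F,G)\in\bar B=\PP^r\times\PP^r$, so the fibre of $\pi$ over $(x_1,x_2)$ is
\[ \bigl\{F\colon F(x_1)=F(x_2)=0\bigr\}\,\times\,\bigl\{G\colon G(x_1)=G(x_2)=0\bigr\}, \]
a product of two linear subspaces of $\PP^r$. By Lemma \ref{one} the condition on $F$ (resp.\ $G$) becomes vacuous exactly at the base point $[0{:}0{:}0{:}0{:}1]$ (resp.\ $[0{:}0{:}0{:}1{:}0]$), and by direct comparison of the monomials $1,\,w^4,\,w^2x_1^2,\,\ldots,\,x_2^4$ appearing in $F_b$ the two conditions $F(x_1)=F(x_2)=0$ are linearly dependent only along certain closed subvarieties encoding a weighted proportionality between $x_1$ and $x_2$, and similarly for $G$. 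Thus the rank of the evaluation map $(F,G)\mapsto(F(x_1),F(x_2),G(x_1),G(x_2))$ is controlled by a combinatorially transparent arrangement inside $\PP\times\PP$.

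The third step is to construct a resolution $\wt{\PP\times\PP}\to\PP\times\PP$---by blowing up the singular line $\{w=x_1=x_2=0\}$ of each factor, the two distinguished base points, the diagonal, and the weighted proportionality loci---and a stratification of $\wt{\PP\times\PP}$ such that on each stratum $T$ the evaluation map has constant rank and $T$ is of the form $\A^k\setminus L$ with $L$ a finite union of linearly embedded affine subspaces. This should be feasible because, as the author signals, ``everything is linear'': in the standard affine charts on $\PP$ every relevant locus is cut out by linear equations in the weighted coordinates. Corollary \ref{linear} then provides the strong property on each such $T$; two applications of Lemma \ref{projbundle} promote this to the strong property on $\pi^{-1}(T)$, which is a product of two projective bundles over $T$; and repeated application of Lemma \ref{local} assembles the pieces into the strong property for (a resolution of) $\bar\VV\times_{\bar B}\bar\VV$. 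A final excision of the closed locus over $\bar B\setminus B'$---itself a family of the same bilinear type over a lower--dimensional base, handled inductively by the same argument---together with one more application of Lemma \ref{local} then yields the strong property for $\VV'\times_{B'}\VV'$ and finishes the proof.

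The main obstacle is the combinatorial step: because $\PP$ is a \emph{weighted} projective space and the equations $F_b,G_b$ are not generic sections of $\OO_{\PP}(4)$ (the coefficients of $z_3^2$ and $z_4^2$ are fixed to equal $1$), the degeneracy loci for the $F$--conditions and for the $G$--conditions are not in generic position with each other or with the singular locus of $\PP$. Arranging the blow--ups in the correct order and verifying that every stratum of the resulting $\wt{\PP\times\PP}$ is genuinely of the form $\A^k\setminus L$---in particular handling the exceptional divisors over the two base points, where one of the $F,G$ conditions is vacuous while the other jumps in codimension---will require a careful case analysis and is the combinatorial heart of the proof.
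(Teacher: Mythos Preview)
Your overall strategy---pass to a compactification, project to (a blow--up of) $\PP\times\PP$, stratify so that the fibres are constant products $\PP^s\times\PP^t$, and apply Totaro's lemmas---is exactly what the paper does. The paper blows up only the diagonal of $\PP\times\PP$ (obtaining the incidence variety $M\subset\bar B\times\wt{\PP\times\PP}$ parametrizing pairs $((F,G),z)$ with $z$ a length--$2$ subscheme and $F\vert_z=G\vert_z=0$); the partial diagonals $\Delta_{3,\pm,\pm},\Delta_{4,\pm,\pm}$ and the two base points are handled by stratification rather than further blow--ups, but this is a matter of bookkeeping, not of substance.

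The genuine gap is your final step. You propose to pass from the compactification to the open $\VV'\times_{B'}\VV'$ by excising the locus over $\bar B\setminus B'$, claiming it is ``a family of the same bilinear type over a lower--dimensional base, handled inductively by the same argument''. This is not correct: $\bar B\setminus B'$ is a discriminant--type hypersurface in $\PP^r\times\PP^r$, not a product of projective spaces, so the fibres of the \emph{other} projection (to $\bar B$) are not linear and the inductive hypothesis does not apply. Moreover, Lemma \ref{local} goes the wrong way for what you need (it produces the strong property for $X$ from that of $Y$ and $U$, not for $U$ from $X$ and $Y$); the converse direction (Totaro's Lemma 6) would require you to first establish the strong property for the boundary, which you have not done and which is not obvious.

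The paper bypasses this entirely and does \emph{not} attempt to prove the strong property for the open. Instead, having shown $A^{hom}_\ast(M)_\QQ=0$ for the projective $M$, it argues directly in codimension $2$: given $a\in A^2_{hom}(U)_\QQ$ with $U=\wt{\VV'\times_{B'}\VV'}$, choose an extension $\bar a\in A_{m-2}(M)_\QQ$; its homology class dies on $U$, hence (after resolving) comes from a Hodge class in $H^2$ of the boundary divisor, which is algebraic by the Lefschetz $(1,1)$ theorem; subtracting this correction gives a homologically trivial class on $M$, which vanishes. This is the key point you are missing, and it explains why only $A^2_{hom}$ is obtained unconditionally (cf.\ Remark \ref{vsc}): the argument uses Lefschetz $(1,1)$ on a divisor, which has no analogue in higher codimension.
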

  
\begin{proof}
Since there is a finite surjective morphism
  \[ \VV^\prime\times_{B^\prime} \VV^\prime\ \to\ \Ss^\prime\times_{B^\prime} \Ss^\prime\ , \]
  the first statement follows from the second. 
  To prove the second statement, we will actually prove the following:
  
 \begin{proposition}\label{key2} Let $\VV^\prime\to B^\prime$ be as in Proposition \ref{key}, and let
   \[  \wt{\VV^\prime\times_{B^\prime} \VV^\prime}\ \to\  \VV^\prime\times_{B^\prime} \VV^\prime\]
   be the blow--up along the relative diagonal. There exists a projective variety $M$ with
   \[  A_\ast^{hom}(M)_{\QQ}=0\ ,\]
   and such that $M$ contains $\wt{\VV^\prime\times_{B^\prime} \VV^\prime}$ as a Zariski open.   
    \end{proposition} 

It is easily seen that Proposition \ref{key2} implies Proposition \ref{key}: indeed, set
   \[ U:=\wt{\VV^\prime\times_{B^\prime} \VV^\prime}\ ,\ \ D:=M\setminus U\ ,\]
   and let $m:=\dim M$.
   Suppose $a\in A^2_{hom}(U)_{\QQ}$. Then there exists $\bar{a}\in A_{m-2}(M)_{\QQ}$ restricting to $a$, and such that the class
     \[ [\bar{a}]\ \ \in H_{2m-4}(M)\]
     maps to $0$ in $H^4U$. Using a resolution of singularities of $M$, one finds that the homology class $[\bar{a}]$ comes from a Hodge class $\beta\in H^2(\wt{D})$ (where $\wt{D}\to D$ is
     a resolution of singularities of the boundary divisor $D$). The Lefschetz $(1,1)$ theorem ensures that the class $\beta$ is algebraic, say $\beta=[b]$ for some $b\in A^1(\wt{D})_{\QQ}$. Now
     \[ \bar{a}^\prime:=\bar{a}-i_\ast(b)\ \ \]
     is a class in $A_{m-2}^{hom}(M)_{\QQ}=0$ restricting to $a$, and hence $a=0$. This clearly implies that also
       \[A^{2}_{hom}(\VV^\prime\times_{B^\prime} \VV^\prime)_{\QQ}=0\] (if $\phi\colon U\to \VV^\prime\times_{B^\prime} \VV^\prime$ denotes the blow--up, we have that $\phi_\ast\phi^\ast=\hbox{id}$ on $A^2_{hom}(\VV^\prime\times_{B^\prime} \VV^\prime)_{\QQ}$).
       
       (NB: a stronger statement
         \[  A^i_{hom}(\VV^\prime\times_{B^\prime} \VV^\prime)_{\QQ}=0\ \ \ \hbox{for\ all\ }i\ \]
        is likely to be true, cf. Remark \ref{vsc}.)

   We now proceed to prove Proposition \ref{key2}; this is a slight modification of an argument of Voisin (\cite[Proposition 2.13]{V0} and \cite[Lemma 1.3]{V1}, also explained in \cite[Section 4.3]{Vo}).
   Let
   \[ \bar{B}\ \supset\ B\]
   denote the projective closure of $B$ (so $\bar{B}$ is a product of two projective spaces $\PP^r\times\PP^r$). Let
   \[ \wt{\PP\times\PP}\ \to\ \PP\times\PP\ \]
   be the blow--up along the diagonal. Points of $\wt{\PP\times\PP}$ correspond to the data of $(x,y,z)$, where $x,y$ are points of $\PP$ and $z\subset X$ is a length $2$ zero--dimensional subscheme with associated cycle $x+y$. Consider now the variety
   \[ M:= \Bigl\{ \bigl((F_b,G_b),(x,y,z)\bigr)\ \vert\ F_b\vert_z=G_b\vert_z=0\Bigr\}\ \ \subset\ \bar{B}\times \wt{\PP\times\PP}\ .\]
   Clearly $M$ contains $\wt{\VV^\prime\times_{B^\prime} \VV^\prime}$ as a Zariski open. We now proceed to show that $M$ has trivial Chow groups. Note that the fibre of the projection
   \[  \pi\colon\ \   M\ \to\ \wt{\PP\times\PP}\]
   over a point $(x,y,z)\in\wt{\PP\times\PP}$ is
     \[  \bigl\{ b\in \bar{B}\ \vert\ F_b\vert_z=G_b\vert_z=0\bigr\}\ \ \subset\ \bar{B}\ ,\]
     which is of the form
     \[    \PP^s\times \PP^t\ \subset\ \PP^r\times\PP^r=\bar{B}   \ .\]
     
    The strategy of this proof will be to stratify $\wt{\PP\times\PP}$ such that over each stratum, the morphism $\pi$ has constant dimension.

   It follows from Lemma \ref{one} that, with two exceptions, every point imposes one condition on the polynomials $F_b, G_b$, i.e. for all 
     \[ x\in\PP\setminus ( [0:0:0:1:0] \cup [0:0:0:0:1])\ ,\]
    we have that
   \[  \bigl\{ b\in\bar{B}\ \vert\ V_b\ni x\bigr\}\cong \PP^{r-1}\times\PP^{r-1}\ \ \subset \PP^r\times\PP^r=\bar{B}\ .\]

   It remains to analyze what happens when we impose two points. 
   Let's define the locus
     \[ \begin{split} \bar{Q}:=f^{-1}\Bigl( ([0&:0:0:1:0]\cup [0:0:0:0:1])\times\PP\\ \ &\cup \ \PP\times ([0:0:0:1:0]\cup [0:0:0:0:1])\Bigr)\ \ \subset \wt{\PP\times\PP}\\
         \end{split} \]
     (where $f\colon\wt{\PP\times\PP}\to\PP\times\PP$ is the blow--up of the diagonal).
        
  We leave aside (for later consideration) $\bar{Q}$ and $E$, that is we write       
      \[ P:=\wt{\PP\times\PP}\setminus (E\cup \bar{Q})\ \]
     (so $P_{}$ is isomorphic to an open in $(\PP\times\PP)\setminus \Delta$).
   
   We now proceed to stratify $P_{}$, as follows:
     First, we define ``partial diagonals''
     \[  \begin{split} \Delta_{3,\pm,\pm}:= \Bigl\{ (p,p^\prime)\in\PP\times\PP\ \vert\  \exists \lambda\in\C^\ast \hbox{\ such\ that\ } p_1=\lambda p_1^\prime &\hbox{\ and\ } p_2=\lambda p_2^\prime\\
       &\hbox{\ and\ }p_0=
            \pm\lambda p_0^\prime\hbox{\ and\ } p_4=\pm \lambda^2 p_4^\prime\Bigr\}\ ,\\
             \Delta_{4,\pm,\pm}:= \Bigl\{ (p,p^\prime)\in\PP\times\PP\ \vert\  \exists \lambda\in\C^\ast \hbox{\ such\ that\ } p_1=\lambda p_1^\prime &\hbox{\ and\ } p_2=\lambda p_2^\prime\\ 
             &\hbox{\ and\ } p_0=\pm\lambda 
                  p_0^\prime   \hbox{\ and\ } p_3=\pm \lambda^2 p_3^\prime\Bigr\}\ \\
             \end{split}\]
      (here we suppose a point $p\in\PP$  has coordinates $p=[p_0:p_1:p_2:p_3:p_4]$). 
      
      (Just to fix ideas: we have for example that $\Delta_{3,+,+}\cap \Delta_{4,+,+}$ is the diagonal of $\PP$.)
           
    We define closed subvarieties $P_{1,j}\subset P_{}$ as follows:
          \[  \begin{split} P_{1,1}&:= (\Delta_{3,+,+})\cap P_{}\ ,\\
                              P_{1,2}&:= (\Delta_{3,+,-})\cap P\ ,\\
                              P_{1,3}&:= (\Delta_{3,-,+})\cap P\ ,\\   
                              P_{1,4}&:= (\Delta_{3,-,-})\cap P\ ,\\   
                              P_{1,5}&:= (\Delta_{4,+,+})\cap P\ ,\\
                              P_{1,6}&:= (\Delta_{4,+,-})\cap P\ ,\\
                              P_{1,7}&:= (\Delta_{4,-,+})\cap P\ ,\\   
                              P_{1,8}&:= (\Delta_{4,-,-})\cap P\ ,\\   
                            \end{split}\]
             and an open subvariety
             \[  P_{}^0:=  P\setminus (\bigcup_j P_{1,j})\]
             (that is, $P_{}^0$ is the complement in $(\PP\times\PP)\setminus f(\bar{Q})$ of the union of the various partial diagonals $\Delta_{3,\pm,\pm}, \Delta_{4,\pm,\pm}$).     
                                  
    We next define closed subvarieties 
      \[  \begin{split} P_{2,1}&:= P_{1,1}\cap   P_{1,5}\ ,\\    
                           P_{2,2}&:= P_{1,1}\cap P_{1,6}\ ,\\
                           P_{2,3}&:= P_{1,1}\cap P_{1,7}\ ,\\
                           P_{2,4}&:= P_{1,1}\cap P_{1,8}\ ,\\
                           P_{2,5}&:= P_{1,2}\cap P_{1,5}\ ,\\
                           P_{2,6}&:= P_{1,2}\cap P_{1,6}\ ,\\
                            \ldots\ \\
                           P_{2,16}&:= P_{1,4}\cap P_{1,8}\ .\\
                           \end{split}\]
                    There are open subvarieties $P_{1,j}^0\subset P_{1,j}$ defined as
                    \[ P_{1,j}^0:= P_{1,j}\setminus (\bigcup_{P_{2,k}\subset P_{1,j}}   P_{2,k})\ .\]

           The upshot is that we have a stratification 
              \[      P_2\subset P_1\subset P=\wt{\PP\times\PP}\setminus E\ ,\]
              where $P_i:=\cup_{j} P_{i,j}$, such that at each step 
              \[    P_i\setminus P_{i+1} = \bigcup_j  P_{i,j}^0 \ .\]
              (Here, by convention, we write $P=P_{0,0}$ and $P^0=P_{0,0}^0$.) 
 
 We now return to the morphism
   \[ \pi\colon\ \ M\ \to\ \wt{\PP\times\PP}\]
   defined above; by construction, each fibre $F$ of $\pi$ is of type 
   \[ F\cong \PP^s\times \PP^t\subset \PP^r\times \PP^r=\bar{B}\ .\]
   Let
   \[ M_2:=\pi^{-1}(P_2)\ ,\ \ M_{1,j}^0:=\pi^{-1}(P_{1,j}^0)\ ,\ \ M_0^0:=\pi^{-1}(P_0^0)\ ;\]
          we thus obtain a stratification of $M_0:=M\setminus \pi^{-1}(E\cup Q)$.
   The point of doing this, is that over each stratum the morphism $\pi$ is of constant dimension:
              
 \begin{lemma}\label{constantdim} Over each stratum of $M_0\to P_0$, the morphism $\pi$ restricts to
    a fibration with fibres $\PP^s\times \PP^t$.
 
 More precisely: a fibre $F=\pi^{-1}(p)$ is
   \[ F\cong \begin{cases}  \PP^{r-1}\times \PP^{r-1}&\hbox{if}\ p\in P_2\ ;\\
                                         \PP^{r-2}\times \PP^{r-1}&\hbox{if}\  p\in P_{1,j}^0\ \hbox{with}\ 1\le j\le 4\ ;\\
                                          \PP^{r-1}\times\PP^{r-2}&\hbox{if}\  p\in P_{1,j}^0\ \hbox{with}\ 5\le j\le 8\ ;\\    
                                        \PP^{r-2}\times\PP^{r-2} &\hbox{if}\ p\in P^0\ .\\
                                        \end{cases}\]
                                           \end{lemma}

\begin{proof} It is readily seen that a point $p$ which lies on a partial diagonal $\Delta_{3,\pm,\pm}$ imposes at most $1$ condition on the
polynomials $G_b$ of Corollary \ref{family}. Combined with Lemma \ref{one}, this observation yields that points on
  \[ \bigcup_{}\Delta_{3,\pm,\pm}\setminus \bigl( (Q\cup E)\cap (\bigcup_{}\Delta_{3,\pm,\pm})\bigr)\]
  impose exactly $1$ condition on polynomials $G_b$ as in corollary \ref{family}.
  On the other hand, given any point
  \[  p=(q,q^\prime)\ \ \in \PP\times\PP\setminus ( Q\cup \bigcup_{}\Delta_{3,\pm,\pm})\ ,\]
  it is readily seen there exists $G_b$ as in Corollary \ref{family} separating the points $q,q^\prime$, i.e. $p$ imposes $2$ independent conditions on the $G_b$.
  
  The same observation can be made concerning the partial diagonals $\Delta_{4,\pm,\pm}$: a point on a $\Delta_{4,\pm,\pm}$ and not on $Q\cup E$ imposes
  exactly $1$ condition on the polynomials $F_b$, while points outside of the $\Delta_{4,\pm,\pm}\cup Q\cup E$ impose $2$ independent conditions on the $F_b$.
  
  Combining these two observations proves the lemma. 
\end{proof}

\begin{lemma}\label{strata} Each of the strata 
    \[ P_{}^0,\  
    \bigcup_{1\le j \le4} P_{1,j}^0,\ 
   \bigcup_{5\le j\le 8} P_{1,j}^0
      ,\ P_2 \] 
can be written as a disjoint union of varieties of type $\A^k\setminus L$, where $L$ is a finite union of linearly embedded affine spaces. 

\end{lemma}
   
\begin{proof} First, consider $P^0=P_{0,0}^0$. By definition, this is nothing but
  \[ \PP\times\PP\setminus (Q\cup \bigcup_{}   \Delta_{3,\pm,\pm}\cup \Delta_{4,\pm,\pm})\ .\]
  Let $U\subset\PP$ be the open subset $(w_0\not=0)$. Then $P^0\cap (U\times U)$ is isomorphic to $\A^8$ minus $8$ copies of $\A^5$ that are linearly embedded.
  The intersection 
  \[ P^0\cap \bigl( (w_0=0)\times (w_0\not=0)\bigr)\] 
  can be identified with $\PP(1,1,2,2)\times \A^4$. It remains to consider 
  \[  P^0\cap \bigl( (w_0=0)\times (w_0=0)\bigr)\ ;\]
  the argument is similar (restricting to the open $(x_1\not=0)$ we find again a stratum of the requisite type).
  
  Next, consider $P_{1,j}^0$. The intersection
    \[  P_{1,j}^0\cap ( U\times U) \]
    is isomorphic to $\A^5$ minus $4$ copies of $\A^4$ that are linearly embedded. Since all intersections are linear subspaces, the assertion for the unions
    \[  \bigcup_{1\le j \le4} P_{1,j}^0,\ 
   \bigcup_{5\le j\le 8} P_{1,j}^0    \]
   follows from this.
   As for $P_2$, this is similar: the intersection
   \[  P_2\cap (U\times U) \]
   is a union of copies of $\A^4$ that are linearly embedded in $\A^8$; in particular the intersections of the irreducible components are again affine spaces. 
\end{proof}   
         
\begin{corollary}\label{strong} The open $M_0:=M\setminus \pi^{-1}(E\cup Q)$ has the strong property.
\end{corollary}

\begin{proof} It follows from Lemma \ref{strata}, combined with Lemma \ref{linear}, that $P_2$ has the strong property. Since $M_2=\pi^{-1}(P_2)$ is a fibration over $P_2$ with fibres products of projective spaces, it follows that $M_2$ has the strong property (Lemma \ref{projbundle}). 

The strata 
  \[ \bigcup_{1\le j\le 4} M_{1,j}^0, \ \ \bigcup_{5\le j\le 8} M_{1,j}^0  \]
   are fibrations over 
   \[  \bigcup_{1\le j\le 4}P_{1,j}^0\ ,\hbox{resp.}\ \  \bigcup_{5\le j\le 8}P_{1,j}^0\ ,\]
   with fibre a product of projective spaces (Lemma \ref{constantdim}). The base has the strong property (Lemmas \ref{strata} and \ref{linear}), hence these strata of $M$ have the strong property (Lemma \ref{projbundle}). Using Lemma \ref{local}, it follows that 
  $M_1$ has the strong property. One similarly finds that $M_0^0=\pi^{-1}(P_{0,0}^0)$ has the strong property, and hence (applying Lemma \ref{local} again) that
  $M^0$ has the strong property.
\end{proof}   

We now return to the closed subset $\bar{Q}$ that we left aside; more precisely, we consider the locally closed subset outside of the exceptional divisor
  \[ Q:= \bar{Q}\setminus (\bar{Q}\cap E)\ \ \subset \wt{\PP\times\PP}\setminus E\ \ \ (\cong \PP\times\PP\setminus \Delta)\ .\]
  
  We proceed to stratify $Q$. We define closed subvarieties:
      \[  \begin{split} Q_{1,1}&:= [0:0:0:1:0]\times\PP\setminus ([0:0:0:1:0]\times[0:0:0:1:0])\ ,\\
                          Q_{1,2}&:=[0:0:0:0:1]\times\PP\setminus ([0:0:0:0:1]\times[0:0:0:0:1])\ ,\\
                          Q_{1,3}&:=\PP\times [0:0:0:1:0]\setminus ([0:0:0:1:0]\times[0:0:0:1:0])\ ,\\                          
                            Q_{1,4}&:=     \PP\times[0:0:0:0:1]\setminus ([0:0:0:0:1]\times[0:0:0:0:1])\ ,\\
                            Q_{2,1}&:=Q_{1,1}\cap (\Delta_{4,+,+})\ ,\\
                             Q_{2,2}&:=Q_{1,2}\cap (\Delta_{3,+,+})\ ,\\
                            Q_{2,3}&:=Q_{1,3}\cap (\Delta_{4,+,+})\ ,\\
                            Q_{2,4}&:=Q_{1,4}\cap (\Delta_{3,+,+})\ ,\\
                             Q_{2,5}&:=Q_{1,1}\cap Q_{1,4}\ ,\\
                            Q_{2,6}&:=Q_{1,2}\cap Q_{1,3}\ .\\
                            \end{split}\]
               We also define open subvarieties
               \[   Q^0_{1,j}:= Q_{1,j}\setminus (\bigcup_{Q_{2,k}\subset Q_{1,j}} Q_{2,k})\ ,\ \ j=1,\ldots,4\ .\]

  \begin{lemma} The varieties $Q^0_{1,j}$ and $Q_{2,k}$ have the strong property.
  \end{lemma} 
  
  \begin{proof} This is readily deduced from Lemma \ref{local}. Each $Q_{1,j} (j=1,\ldots,4)$ is a copy of $\PP$ with a point taken out; each $Q_{2,k}$ with $1\le k\le 4$ 
  is a copy of $\PP^1$ with a point taken out; $Q_{2,5}$ and $Q_{2,6}$ are just points; each $Q_{1,j}^0 (j=1,\ldots,4)$ is isomorphic to $\PP$ with a $\PP^1$ and a point taken out.
  \end{proof}
  
  Consider now the restriction of the morphism $\pi$ to
    \[  M_{Q}:= \pi^{-1}(Q)\ \to\ Q\ ,\]
and to the various strata of $M_Q$ defined by the stratification of $Q$:
  \[ \begin{split} M_{Q_{1,j}^0}&:= \pi^{-1}(Q_{1,j}^0)\ ,\ \ j=1,\ldots,4\ ,\\
                         M_{Q_{2,k}}&:= \pi^{-1}( Q_{2,k})\ ,\ \ k=1,\ldots,6\ .\\
                      \end{split}\]   
  
  \begin{lemma} The restriction of $\pi$ has constant dimension on each $M_{Q^0_{1,j}}$, and on each $M_{Q_{2,k}}$.
    \end{lemma}
    
  \begin{proof} Consider a point $q$ on the stratum $Q_{1,1}^0$ (the argument for the other $Q_{1,j}^0$ is only notationally different). 
  We have
    \[ q=\bigl( [0:0:0:1:0], p\bigr)\ ,\]
    for some $p\in\PP$.
    Clearly {\em all\/} polynomials $G_b$ as in Corollary \ref{family} pass through $[0:0:0:1:0]$. Since we are outside of the partial diagonals $\Delta_{3,\pm,\pm}$ and $\Delta_{4,\pm,\pm}$, the point $q$ imposes one condition on the polynomials $G_b$, and two conditions on the polynomials $F_b$ (cf. the proof of Lemma \ref{constantdim}). It follows that the fibre is
    \[ \pi^{-1}(q)\cong \PP^{r-2}\times \PP^{r-1}\ .\]
    The argument for the strata $Q_{2,k}, 1\le k\le 4$ is similar; consider for example a point $q\in Q_{2,1}$. Such a $q$ imposes one condition on the $G_b$, and one condition on the $F_b$ and so
    \[ \pi^{-1}(q)\cong \PP^{r-1}\times \PP^{r-1}\ .\]
    The points 
    \[  Q_{2,5}=\bigl( [0:0:0:1:0], [0:0:0:0:1]\bigr)\ \]
    and 
    \[ Q_{2,6}= \bigl( [0:0:0:0:1], [0:0:0:1:0]\bigr)\ \]  
   likewise impose one condition on the $F_b$ and one condition on the $G_b$, and hence
   \[ \pi^{-1}(Q_{2,k})\cong \PP^{r-1}\times \PP^{r-1}\ \ \ , k=5,6\ .\]              
      \end{proof}

   \begin{corollary}\label{strong3} The variety $M_Q$ has the strong property.
    \end{corollary} 
    
    \begin{proof} This is immediate from the above two lemmas, using Lemma \ref{local}.
    \end{proof}

  It remains to stratify the exceptional divisor $E$ of the blow--up
    \[ f\colon\ \  \wt{\PP\times\PP}\ \to\ \PP\times\PP\ ,\] 
    in a similar way. A point on $E$ is given by the data
    \[   \bigl\{    \bigl(x,t\bigr)\in \PP\times \PP^3\bigr\}\ ,\]
    where $(x,x)$ is a point on the diagonal, and $L_t$ is a line in $\PP\times\PP$ passing through $(x,x)$ and not contained in the diagonal. 
    Consider the following loci:
    \[ \begin{split} E_{1,1}&:=f^{-1}([0:0:0:1:0])\ ,\\
                           E_{1,2}&:=f^{-1}([0:0:0:0:1])\ ,\\    
                           E_{1,3}&:= E\cap \bar{\Delta}_{3,+,+}\ ,\\
                            E_{1,4}&:= E\cap \bar{\Delta}_{4,+,+}\ ,\\
                            \end{split}\]
                 where $\bar{\Delta}_{j,+,+}$ denotes the strict transform of $\Delta_{j,+,+}$. 
                 
                 (That is, $E_{1,3}$ and $E_{1,4}$ parametrize lines not contained in the diagonal that remain inside $\Delta_{3,+,+}$ resp. $\Delta_{4,+,+}$.) 
                     
                 We define $E^0$ as the open complement:
                 \[ E^0:= E\setminus (\cup_j E_{1,j})\ .\]
                 We also define points
                 \[   \begin{split}  E_{2,1}&:=E_{1,1}\cap E_{1,3}\ ,\\
                                           E_{2,2}&:= E_{1,1}\cap E_{1,4}\ ,\\
                                           E_{2,3}&:=E_{1,2}\cap E_{1,3}\ ,\\
                                              E_{2,4}&:=E_{1,2}\cap E_{1,4}\ ,\\
                                       \end{split}\]   
                 and
                  locally closed subvarieties
                 \[  \begin{split} E_{1,1}^0&:= E_{1,1}\setminus (E_{2,1}\cup E_{2,2})\ ,\\
                                          E_{1,2}^0&:= E_{1,2}\setminus (E_{2,3}\cup E_{2,4})\ ,\\
                                          E_{1,3}^0&:= E_{1,3}\setminus (E_{2,1}\cup E_{2,3})\ ,\\
                                          E_{1,4}^0&:= E_{1,4}\setminus (E_{2,2}\cup E_{2,4} \ .\\
                                    \end{split}\]

    \begin{lemma} The varieties $E^0$, $E_{1,j}^0$ and $E_{2,j}$ have the strong property.
    \end{lemma}
    
    \begin{proof} The $E_{2,j}$ are just points. The varieties $E_{1,1}^0$ and $E_{1,2}^0$ are isomorphic to $\PP^3$ minus two points, so this is again obvious. The varieties $E_{1,3}^0, E_{1,4}^0$ are isomorphic to the diagonal of $\PP$ minus two points. Applying Lemma \ref{local}, it follows that
      \[ \bigcup_j E_{1,j}\]
      has the strong property.
      As for $E^0$: clearly $E$ has the strong property; we take out $\cup_j E_{1,j}$ which has the strong property: the result is something with the strong property (\cite[Lemma 6]{T}).  
    \end{proof}
   
 We now return to the morphism $\pi\colon M\to\wt{\PP\times\PP}$ defined above. The pre--image
   \[ M_E:= \pi^{-1}(E)\]
   admits a stratification as a disjoint union
   \[  M_E= M_{E^0}\cup \bigcup_{1\le j\le 4} M_{E_{1,j}^0}\cup \bigcup_{1\le k\le 4} M_{E_{2,k}}  \ ,\]
   where $M_{E^0}, M_{E^0_{1,j}}, M_{E_{2,k}}$ are defined as $\pi^{-1}(E_0)$ resp. $\pi^{-1}(E^0_{1,j})$ resp. $\pi^{-1}(E_{2,k})$.
   
   On each stratum, the morphism $\pi$ is of constant dimension:
    
   \begin{lemma} The fibre $F=\pi^{-1}(p)$ of  $\pi\colon M\to\wt{\PP\times\PP}$ is
     \[  F\cong \begin{cases}        \PP^{r-2}\times\PP^{r}&\hbox{if\ }p= E_{2,1}\ ;\\     
                                                        \PP^{r-1}\times\PP^{r-1}&\hbox{if\ }p= E_{2,2}\ ;\\
                                                         \PP^{r-1}\times\PP^{r-1}&\hbox{if\ }p= E_{2,3}\ ;\\  
                                                          \PP^{r}\times\PP^{r-2}&\hbox{if\ }p= E_{2,4}\ ;\\
                                                          \PP^{r-2}\times\PP^{r-1}&\hbox{if\ }p\in E_{1,1}^0\ ;\\
                                                           \PP^{r-1}\times\PP^{r-2}&\hbox{if\ }p\in E_{1,2}^0\ ;\\ 
                                                   \PP^{r-1}\times\PP^{r-2}&\hbox{if\ }p\in E_{1,4}^0\ ;\\    
                                        \PP^{r-2}\times\PP^{r-1}&\hbox{if\ }p\in E_{1,3}^0\ ;\\
                                        \PP^{r-2}\times\PP^{r-2}&\hbox{if\ }p\in E^0\ .\\
                                                                  \end{cases}\]
                                                   \end{lemma}
                                                   
   \begin{proof} We consider a point on $E$ is given by the data
    \[   \bigl\{    \bigl(x,t\bigr)\in \PP\times \PP^3\bigr\}\ ,\]
    where $(x,x)$ is a point on the diagonal, and $L_t$ is a line in $\PP$ passing through $x$.
   The point $x$ imposes $1$ condition on the $G_b$, except for the point $[0:0:0:1:0]$ which imposes no condition on the $G_b$, cf. Lemma \ref{one}. The point $x$ imposes one condition on the $F_b$, except for the point $[0:0:0:0:1]$ which imposes no condition on the $F_b$.

Suppose now the point $x$ is not one of the exceptional points $[0:0:0:1:0], [0:0:0:0:1]$, i.e. we are outside of $E_{1,1}\cup E_{1,2}$. Suppose also the line $L_t$ is not contained in $\Delta_{3,+,+}$, i.e. we are outside of $E_{1,3}$. We consider the morphism
  \[ \phi\colon\ \ \PP\setminus [0:0:0:1:0]\ \xrightarrow{\phi_1}\ \PP(1,1,1,2)  \ \xrightarrow{\phi_2}\ \PP^\prime:=\PP(2,1,1,4)\ ,\]
  where $\phi_1$ is obtained by forgetting the $z_3$ coordinate, and $\phi_2$ is obtained by letting groups of roots of unity act diagonally. The image $\phi(L_t)$ is a line passing through the point $\phi(x)$. Since the line bundle ${\mathcal O}_{\PP^\prime}(4)$ is very ample (cf. Lemma \ref{delorme} below), there exists a polynomial $g$ of weighted degree $4$ such that the hypersurface $(g=0)$ contains $\phi(x)$ and is transverse to $\phi(L_t)$. The polynomial $g$ looks like
    \[ \lambda z_4 + c^\prime w^2 + w(\hbox{something\ quadratic\ in\ }x_1,x_2)+(\hbox{something\ quartic\ in\ }x_1,x_2)\ .\]
    The inverse image $\phi^{-1}(g=0)$ in $\PP$ looks like
    \[   \lambda z_4^4 + c^\prime w^4 + w^2(\hbox{something\ quadratic\ in\ }x_1,x_2)+(\hbox{something\ quartic\ in\ }x_1,x_2)=0\ ,\]
    that is we have found a $G_b$, $b\in\bar{B}$ containing $x$ and transverse to the line $L_t$. This shows that a point $p\in E^0$ imposes $2$ independent conditions on the polynomials $G_b$. Since the argument with respect to the $F_b$ and $\Delta_{4,+,+}$ is symmetric, this proves the last line.

    Suppose now $x\not\in \{ [0:0:0:1:0], [0:0:0:0:1]\}$ and $L_t\subset\Delta_{3,+,+}$, i.e. $p\in E_{1,3}^0$. The line $L_t$ disappears under the projection $\phi_1$, which means that all the $G_b$ will be tangent to $L_t$; this proves the one--but--last line. 
    
    The remaining cases are similarly checked. For instance, suppose $x=[0:0:0:0:1]$ and $L_t\not\subset\Delta_{3,+,+}$, i.e. the point $p$ is in $E_{1,2}^0\cup E_{2,4}$.
    The line $L_t$ does not disappear under the projection $\phi$, so (as above) the point $p$ imposes $2$ conditions on the $G_b$. If $L_t\not\subset\Delta_{4,+,+}$, the point $p$ imposes $1$ condition on the $F_b$.\footnote{We remark that a detailed analysis of $E_{1,1}$ and $E_{1,2}$ is not absolutely necessary to our argument; an easy way out is as follows: the dimension of $M$ is $\dim(\PP\times\PP)+2(r-2)=2r+4$. The dimension of $\pi^{-1}(E_{1,1}\cup E_{1,2})$ is (by what we have said above) at most $3+r-2+r=2r+1$, so whatever happens above $E_{1,1}\cup E_{1,2}$ can not interfere with codimension $2$ cycles: we have 
      \[A_{s-2}(M)\cong A_{s-2}(M\setminus \pi^{-1}(E_{1,1}\cup E_{1,2}))\ \]
      (where $s:=\dim M$). 
      That is: as long as we are only interested in codimension $2$ cycles, we may just as well leave out $E_{1,1}$ and $E_{1,2}$.}
     \end{proof}

    \begin{lemma}\label{delorme} Let $P^\prime$ be the weighted projective space $\PP(2,1,1,4)$. Then the line bundle ${\mathcal O}_{P^\prime}(4)$ is very ample.
    \end{lemma}
    
    \begin{proof} The coherent sheaf ${\mathcal O}_{P^\prime}(4)$ is locally free, because $4$ is a multiple of the ``weights'' \cite{Dol}. To see that this line bundle is very ample, we use the following numerical criterion: 
 
 \begin{proposition}[Delorme \cite{Del}]\label{del} Let $P=\PP(q_0, q_1,\ldots,q_n)$ be a weighted projective space. Let $m$ be the least common multiple of the $q_j$. Suppose every monomial
 \[x_0^{b_0} x_1^{b_1}\cdots x_n^{b_n}\]
 of (weighted) degree $km$ ($k\in \NN^\ast$) is divisible by a monomial of (weighted) degree $m$. Then ${\mathcal O}_{P}(m)$ is very ample.
 \end{proposition}
 
(This is the case $E(x)=0$ of \cite[Proposition 2.3(\rom3)]{Del}.)

Lemma \ref{delorme} is now easily established: suppose 
  \[x^{\underline{b}}=x_0^{b_0}x_1^{b_1}x_2^{b_2}x_3^{b_3}\] 
  is a monomial of weighted degree $4k$, i.e.
  \[ 2b_0+b_1+b_2+4b_3=4k\ .\]
  If $b_3\ge 1$, then $x_3$ divides $x^{\underline{b}}$ and we are OK. Suppose now $b_3=0$. If $b_0\ge 2$, then we are OK since $x_0^2$ divides $x^{\underline{b}}$. If $b_0=1$, then $b_1+b_2\ge 2$ and $x_0$ times something quadratic ($x_1^2$ or $x_1x_2$ or $x_2^2$) divides $x^{\underline{b}}$. The remaining case $b_3=b_0=0$ is obviously OK.
  \end{proof}

   \begin{corollary}\label{strong2} The variety $M_E$ has the strong property.
   \end{corollary}
   
   \begin{proof} This follows from Lemma \ref{projbundle} and Lemma \ref{local}.
   \end{proof}                                                
                                                   
   Now we are able to wrap up the proof: the variety $M$ that we are interested in is a disjoint union of three strata
     \[   M=M^0\cup M_Q\cup M_E\ .\]
     Each of these three strata
       has the strong property (Corollaries \ref{strong} and \ref{strong3} and \ref{strong2}); applying Lemma \ref{local}, it follows that $M$ has the strong property, i.e.
       \[ A_\ast^{hom}(M)_{\QQ}=0\ ,\]
   which proves Proposition \ref{key2}.        
        \end{proof}

 \begin{remark}\label{vsc} If we assume the ``Voisin standard conjecture'' ( \cite[Conjecture 0.6]{V0}, \cite[Conjecture 2.29]{Vo})  is true, we obtain the stronger statement that
   \[  A^i_{hom}(\VV^\prime\times_{B^\prime} \VV^\prime)_{\QQ}=0\ \]
   for all $i$. Since for our argument, we are only interested in (surfaces and hence) codimension $2$ cycles on $\VV^\prime\times_{B^\prime} \VV^\prime$, 
   we have no need for this conditional stronger statement. As noted in \cite{V0}, the Voisin standard conjecture is true in codimension $2$ and so in this case one obtains an unconditional statement.
    \end{remark}

\begin{remark}\label{integral} We note in passing that 
everything we say in this section is still valid when replacing "the weak (resp. strong) property" by "the weak (resp. strong) Chow--K\"unneth property". This last notion is defined in  \cite[page 10]{T} (and further studied in \cite{T2}). This implies (using \cite[Proposition 2]{T} or \cite{FMSS}) that the variety $M$ satisfies
  \[ A^i(M)\cong \hbox{Hom}(A_i(M),\ZZ)\ \ \hbox{for\ all\ i}\ ,\]
  where the left--hand side denotes Fulton--MacPherson's operational Chow cohomology \cite{F}.
We do not need this statement here.
 \end{remark}

\section{Motives}

This section contains a motivic version of the main result, stating that for the Todorov surfaces under consideration, the ``transcendental part of the motive'' (in the sense of \cite{KMP}) is isomorphic to the transcendental part of the motive of the associated $K3$ surface (Theorem \ref{main2}). Some consequences are given.

\begin{theorem}[Kahn--Murre--Pedrini \cite{KMP}]\label{trans} Let $S$ be any smooth projective surface, and let $h(X)\in\MM_{\rm rat}$ denote the Chow motive of $S$.
There exists a self--dual Chow--K\"unneth decomposition $\{\pi_i\}$ of $S$, with the property that there is a further splitting in orthogonal idempotents
  \[ \pi_2= \pi_2^{alg}+\pi_2^{tr}\ \ \hbox{in}\ A^2(S\times S)_{\QQ}\ .\]
  The action on cohomology is
  \[  (\pi_2^{alg})_\ast H^\ast(S)= N^1 H^2(S)\ ,\ \ (\pi_2^{tr})_\ast H^\ast(S) = H^2_{tr}(S)\ ,\]
  where the transcendental cohomology $H^2_{tr}(S)\subset H^2(S)$ is defined as the orthogonal complement of $N^1 H^2(S)$ with respect to the intersection pairing. The action on Chow groups is
  \[ (\pi_2^{alg})_\ast A^\ast(S)_{\QQ}= N^1 H^2(S)\ ,\ \ (\pi_2^{tr})_\ast A^\ast(S) = A^2_{AJ}(S)_{\QQ}\ .\]  
 This gives rise to a well--defined Chow motive
  \[ t_2(S):= (S,\pi_2^{tr},0)\ \subset \ h(X)\ \ \in\MM_{\rm rat}\ ,\]
  the so--called {\em transcendental part of the motive of $S$}.
  \end{theorem}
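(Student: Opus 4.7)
The plan is to follow the Kahn--Murre--Pedrini construction, which refines Murre's Chow--K\"unneth decomposition for surfaces. First I would construct the Chow--K\"unneth projectors. Fix a smooth ample divisor class, represented by a point $o\in S$ (or more precisely, $1/\deg(H^2)$ times a zero--cycle supported on an ample curve). Set
\[ \pi_0:=[o\times S]\ ,\ \ \pi_4:=[S\times o]\ ,\]
and use Murre's construction to define mutually orthogonal idempotents $\pi_1,\pi_3$ whose action on $H^\ast(S)$ cuts out $H^1(S)$ and $H^3(S)$ respectively (this uses $q(S)=\dim\mathrm{Alb}(S)$ and the graph of the Albanese map). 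Define $\pi_2:=\Delta_S-\pi_0-\pi_1-\pi_3-\pi_4$. After a standard symmetrization step one may assume the whole decomposition is self--dual, i.e.\ ${}^t\pi_i=\pi_{4-i}$.

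Next I would split $\pi_2$. Choose divisors $D_1,\ldots,D_\rho\in A^1(S)_{\QQ}$ whose cohomology classes form a basis of $N^1H^2(S)$. By the Hodge index theorem the intersection pairing on $N^1H^2(S)$ is non--degenerate, so there is a dual basis $D_1^\vee,\ldots,D_\rho^\vee\in A^1(S)_{\QQ}$ with $D_i\cdot D_j^\vee=\delta_{ij}$. Define
\[ \pi_2^{alg}:=\sum_{i=1}^{\rho} [D_i\times D_i^\vee]\ \ \in A^2(S\times S)_{\QQ}\ .\]
A direct calculation using $D_i\cdot D_j^\vee=\delta_{ij}$ shows $\pi_2^{alg}$ is an idempotent, and that it is orthogonal to $\pi_0,\pi_1,\pi_3,\pi_4$ (the orthogonality with $\pi_1,\pi_3$ uses that divisors are killed by the Albanese projectors). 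Hence $\pi_2^{tr}:=\pi_2-\pi_2^{alg}$ is an idempotent orthogonal to all the others, giving the refined decomposition.

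The verification of the cohomology action is then immediate from the construction: $(\pi_2^{alg})_\ast$ acts on $H^\ast(S)$ as the projector onto $N^1H^2(S)$ with respect to the decomposition $H^2(S)=N^1H^2(S)\oplus H^2_{tr}(S)$ induced by the intersection pairing, and $(\pi_2^{tr})_\ast$ is the complementary projector. For the action on Chow groups, one notes that $(\pi_2^{alg})_\ast$ clearly factors through $N^1H^2(S)\subset A^1(S)_{\QQ}$, while the computation $(\pi_2^{tr})_\ast A^2(S)_{\QQ}=A^2_{AJ}(S)_{\QQ}$ follows from Murre's analysis: $(\pi_0)_\ast,(\pi_1)_\ast,(\pi_3)_\ast,(\pi_4)_\ast$ exhaust the Chow groups outside $A^2_{AJ}$, and $(\pi_2^{alg})_\ast$ acts trivially on $A^2(S)_{\QQ}$ since its support projects to a divisor on the second factor.

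The main obstacle is the construction of $\pi_1,\pi_3$ and the verification that $\pi_2^{alg}$ is genuinely orthogonal to them at the level of rational equivalence (not merely cohomologically); this is where Murre's original argument, based on the universal property of the Albanese, is essential. Once that is in place, the rest of the argument is bookkeeping with the intersection form on $N^1H^2(S)$ and the definition of $t_2(S)$ as the motive cut out by the idempotent $\pi_2^{tr}$.
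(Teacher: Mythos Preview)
Your sketch is essentially the Kahn--Murre--Pedrini construction, and it is correct in outline. The paper does not give an independent proof at all: its ``proof'' consists of two lines citing \cite[Proposition 7.2.1]{KMP} for the Chow--K\"unneth decomposition and \cite[Proposition 7.2.3]{KMP} for the refined splitting of $\pi_2$. So there is no alternative approach to compare against; you have simply unpacked what lies behind those citations.

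One small point worth tightening: the orthogonality of $\pi_2^{alg}$ with the other $\pi_i$ \emph{in $A^2(S\times S)_{\QQ}$} (not just cohomologically) does not come for free from the formula $\sum D_i\times D_i^\vee$. In \cite{KMP} this is arranged by replacing the naive $\pi_2^{alg}$ by $\pi_2\circ\pi_2^{alg}\circ\pi_2$ (or an equivalent correction), which forces orthogonality to $\pi_0,\pi_1,\pi_3,\pi_4$ while preserving idempotence and the cohomological action. You allude to this obstacle at the end, but in a full write--up it should be made explicit rather than left as ``bookkeeping''.
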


\begin{proof} Let $\{\pi_i\}$ be a Chow--K\"unneth decomposition as in \cite[Proposition 7.2.1]{KMP}. The assertion then follows from \cite[Proposition 7.2.3]{KMP}.
\end{proof}

\begin{theorem}\label{main2} Let $S$ be a Todorov surface with $K^2_S=2$ and $\pi_1(S)=\ZZ/ 2\ZZ$, and let $P$ be the $K3$ surface obtained as a resolution of singularities of $S/\iota$.
The natural correspondence from $S$ to $P$ induces an isomorphism of Chow motives
  \[  t_2(S)\ \cong\ t_2(P)\ \ \hbox{in}\ \MM_{\rm rat}\ .\]
 \end{theorem}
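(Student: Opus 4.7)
The plan is to upgrade the Chow-group isomorphism of Theorem \ref{main} to a Chow motive isomorphism by tracking the decomposition of the diagonal constructed in its proof. Since $t_2$ is a birational invariant of smooth projective surfaces \cite{KMP}, it suffices to establish $t_2(\wt S_b)\cong t_2(\wt M_b)$ for the smooth models appearing in the proof of Theorem \ref{main}, with $b\in B$ chosen so that $\wt S_b$ is a blow-up of $S$. The candidate isomorphism and its inverse (up to rescaling) are
\[ \Psi:=\pi_2^{tr}(\wt M_b)\circ \Gamma_{\wt f_b}\circ \pi_2^{tr}(\wt S_b),\qquad \Phi:=\pi_2^{tr}(\wt S_b)\circ{}^t\Gamma_{\wt f_b}\circ \pi_2^{tr}(\wt M_b); \]
I will prove $\Phi\circ\Psi=2\,\mathrm{id}_{t_2(\wt S_b)}$ and $\Psi\circ\Phi=2\,\mathrm{id}_{t_2(\wt M_b)}$, so that $\tfrac12\Phi$ is the two-sided inverse.

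Restricting the cycles constructed in the proof of Theorem \ref{main} to the fiber over $b$ yields, in $A^2(\wt S_b\times\wt S_b)_{\QQ}$, a decomposition
\[ 2\Delta_{\wt S_b} = {}^t\Gamma_{\wt f_b}\circ\Gamma_{\wt f_b} + \gamma_Y + \gamma_Z + \gamma_{\PP}, \]
where $\gamma_Y$ is supported on $Y_b\times Y_b$ for a divisor $Y_b$, $\gamma_Z$ on $Z_b\times\wt S_b\cup\wt S_b\times Z_b$ for a divisor $Z_b$, and $\gamma_{\PP}$ is pulled back from a cycle in $A^2(\PP\times\PP)_{\QQ}$. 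Sandwiching both sides with $\pi_2^{tr}(\wt S_b)$ kills each error term: $\gamma_Y,\gamma_Z$ factor through the motive of a smooth curve (a desingularization), and the summands of that motive---Tate motives and $h^1$ of a curve---are annihilated by $\pi_2^{tr}$ (the $h^1$ part because $q(\wt S_b)=0$ forces any map from a Jacobian to $\mathrm{Alb}(\wt S_b)=0$ to vanish); $\gamma_{\PP}$ is a linear combination of $(h^i\times h^{2-i})$ products of powers of the hyperplane class, each satisfying $(h^i\times h^{2-i})\circ\pi_2^{tr}=0$ directly, since $(\pi_2^{tr})_*$ annihilates $A^0(\wt S_b)$ and $A^1(\wt S_b)$. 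This gives $\pi_2^{tr}(\wt S_b)\circ{}^t\Gamma_{\wt f_b}\circ\Gamma_{\wt f_b}\circ\pi_2^{tr}(\wt S_b)=2\,\mathrm{id}_{t_2(\wt S_b)}$; inserting $\mathrm{id}_{\wt M_b}=\pi_2^{tr}(\wt M_b)+\sum_i\pi_i(\wt M_b)$ in the middle (noting $\pi_1=\pi_3=0$ since $\wt M_b$ is $K3$), each cross-term with $\pi_i(\wt M_b)$, $i\neq 2^{tr}$, factors through an algebraic Chow--K\"unneth summand and vanishes by the same orthogonality, yielding $\Phi\circ\Psi=2\,\mathrm{id}_{t_2(\wt S_b)}$.

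For the opposite composition, a transverse fiber-product computation in $\wt M_b\times\wt S_b\times\wt M_b$ gives $\Gamma_{\wt f_b}\circ{}^t\Gamma_{\wt f_b}=2\Delta_{\wt M_b}$ in $A^2(\wt M_b\times\wt M_b)_{\QQ}$, using that $\wt f_b$ is finite flat of degree $2$ with smooth source. Expanding $\Psi\circ\Phi$ with $\mathrm{id}_{\wt S_b}=\pi_2^{tr}(\wt S_b)+\sum_i\pi_i(\wt S_b)$ (again $\pi_1=\pi_3=0$ since $q(\wt S_b)=0$) gives $\Psi\circ\Phi=2\,\pi_2^{tr}(\wt M_b)+\sum_i(\mathrm{error})_i$, where each $(\mathrm{error})_i$ is a correspondence factoring through the algebraic summand $\pi_i(\wt S_b)h(\wt S_b)$ and hence vanishes after sandwiching with $\pi_2^{tr}(\wt M_b)$ by the same orthogonality. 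Therefore $\Psi\circ\Phi=2\,\mathrm{id}_{t_2(\wt M_b)}$.

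The main obstacle is verifying the orthogonality statements: that $\pi_2^{tr}$ annihilates correspondences factoring through curves (where $q=0$ is essential), through Tate motives, or through algebraic Chow--K\"unneth summands. These verifications are in the spirit of \cite[\S7]{KMP} and reduce to direct Chow-group computations (for instance $\pi_2^{tr}\circ(D\times E)\circ\pi_2^{tr}=0$ for $D,E\in A^1$, because $(\pi_2^{tr})_*D=0$). Once $\Psi\colon t_2(\wt S_b)\xrightarrow{\sim}t_2(\wt M_b)$ is in hand, the birational invariance of $t_2$ applied to the blow-up diagram at the end of the proof of Theorem \ref{main} gives the desired isomorphism $t_2(S)\cong t_2(P)$ in $\MM_{\rm rat}$.
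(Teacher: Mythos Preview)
Your proposal is correct and follows essentially the same route as the paper's own proof: sandwich the diagonal decomposition produced in Theorem~\ref{main} by the transcendental projectors to kill the error terms (the paper invokes \cite[Theorem 7.4.3 and Lemma 7.4.1]{KMP} where you argue more directly), use $\Gamma_{\wt f_b}\circ{}^t\Gamma_{\wt f_b}=2\Delta_{\wt M_b}$ for the reverse composition, and conclude by birational invariance of $t_2$. One minor imprecision: $\wt f_b$ is only \emph{generically} finite of degree $2$ (not finite flat), but the transverse fibre-product computation you sketch still yields the needed identity, so this does not affect the argument.
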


\begin{proof} This is just a dressed--up version of the argument of Theorem \ref{main}. Let $\Ss\to B$ be the family of canonical models of Todorov surfaces with fundamental invariants $(1,10)$, as in Corollary \ref{family}. As in the proof of Theorem \ref{main}, we have morphisms of families over $B$
    \[ \begin{array}[c]{ccc}  
                                   \wt{\Ss}& \to & \Ss\\
                                  \ \ \ \  \downarrow{\wt{f}}&& \ \ \downarrow{f}\\
                                  \wt{\MM}&\to& \MM\\
                                   &\searrow\ &\downarrow\\
                                  &&B\\
                                  \end{array}\]

Here $\MM$ is the family of $K3$ surfaces with rational double points, and $\wt{\MM}$ is the family of desingularized $K3$ surfaces.
Taking the graph of the morphism $\wt{f}$, one gets a relative correspondence
  \[ \Gamma_{\wt{f}}\in A_{s-2}(\wt{\Ss}\times_B \wt{\MM})\ \]
  (here $s$ denotes $\dim \wt{\Ss}\times_B \wt{\MM}$). 
  The proof of Theorem \ref{main}, applied to the relative correspondence
   \[ 2 \Delta_{\wt{\Ss}} -{}^t \Gamma_{\wt{f}}\circ \Gamma_{\wt{f}}\ \ \in A_{s-2}(\wt{\Ss}\times_B \wt{\Ss}) \] 
     gives a rational equivalence for the general (and hence, for any) $b\in B$:
      \begin{equation}\label{rat}  
          2\Delta_{\wt{S}_b}= {}^t \Gamma_{\wt{f}_b} \circ \Gamma_{\wt{f}_b} + \sum_{i,j} D_i\times D_j+\gamma \ \ \in A^2(\wt{S}_b\times \wt{S}_b)_{\QQ}\ ,\end{equation}
  where the $D_i\subset \wt{S}_b$ are divisors, and $\gamma$ is supported on
  \[  E_b\times\wt{S}_b\ \cup\ \wt{S}_b\times E_b\ ,\]
  and $E_b\subset\wt{S}_b$ is an exceptional divisor for the morphism $\wt{S}_b\to S_b$. Note that $\gamma$ is contained in the ideal of so--called ``degenerate correspondences'' 
  ${\mathcal J}(\wt{S}_b,\wt{S}_b)$ \cite[page 309]{F}, \cite[Definition 7.4.2]{KMP}.
  
  Consider now $S$ a Todorov surface with fundamental invariants $(1,10)$, and $P$ the associated $K3$ surface.
  It follows from Corollary \ref{family} that the canonical model of $S$ is an $S_b$ for some $b\in B$, so that $S$ is birational to the smooth surface $\wt{S}_b$ and $P$ is birational to the smooth surface $\wt{\MM}_b$.
    Let 
    \[\pi_0^S, \ \pi_2^S, \ \pi_4^S\ ,\ \  \hbox{and}\ \  \pi_0^M,\  \pi_2^M,\ \pi_4^M\] 
    denote a Chow--K\"unneth decomposition for $\wt{S}_b$, resp. for $\wt{\MM}_b$, as in Theorem \ref{trans}. Let 
    \[   \pi_2^S= \pi_2^{S,alg} +\pi_2^{S,tr}\ \ \hbox{and}\ \ 
                       \pi_2^M= \pi_2^{M,alg} +\pi_2^{M,tr}\  
                       \]
                     be the refined decomposition of Theorem \ref{trans}.                     
                     Since $\pi_2^{S,alg}$ is a projector on $NS(\wt{S}_b)_{\QQ}$, we have equality
                     \[  \sum_{i,j} D_i\times D_j=(\sum_{i,j} D_i\times D_j)\circ \pi_2^{S,alg}\ ,\]
                  and hence (since $\pi_2^{S,tr}$ and $\pi_2^{S,alg}$ are orthogonal) we find that
                   \[  (\sum_{i,j} D_i\times D_j)\circ \pi_2^{S,tr}=0\ \ \hbox{in}\ A^2(\wt{S}_b\times \wt{S}_b)_{\QQ}\ .\]
              Likewise, since $\gamma\in {\mathcal J}(\wt{S}_b,\wt{S}_b)$, it follows from \cite[Theorem 7.4.3]{KMP} that
                 \[   \pi_2^{S,tr}\circ \gamma \circ \pi_2^{S,tr}=0\ \ \hbox{in}\ A^2(\wt{S}_b\times \wt{S}_b)_{\QQ}\ .\]
                 
               It now follows from equation \ref{rat} (after twice applying $\pi_2^{S,tr}$ on both sides) that                    
                  \[  2 \pi_2^{S,tr}=  \pi_2^{S,tr}\circ {}^t \Gamma_{\wt{f}_b} \circ \Gamma_{\wt{f}_b}\circ \pi_2^{S,tr}\ \ \hbox{in}\ A^2(\wt{S}_b\times \wt{S}_b)_{\QQ}\ .\]
      
      The next step is to remark that
        \[  \begin{split}   &\pi_4^M\circ \Gamma_{\wt{f}_b} \circ \pi_2^S=0\ ,\\            
                                  &\pi_2^S\circ {}^t \Gamma_{\wt{f}_b} \circ \pi_0^M=0\ ,\\
                                  \end{split}\]
          (this follows from \cite[Theorem 7.3.10 (\rom1)]{KMP}), and so we have
             \[ 2\pi_2^{S,tr}\circ {}^t \Gamma_{\wt{f}_b} \circ \Gamma_{\wt{f}_b}\circ \pi_2^{S,tr}=  \pi_2^{S,tr}\circ {}^t \Gamma_{\wt{f}_b} 
             \circ \pi_2^M\circ \Gamma_{\wt{f}_b}\circ \pi_2^{S,tr}  \ .\] 
             %
            But       
            \[ \pi_2^M\circ \Gamma\circ \pi_2^{S,tr}  =  \pi_2^{M,tr}\circ \Gamma\circ \pi_2^{S,tr}\]
           \cite[Lemma 7.4.1]{KMP}, so we end up with a rational equivalence
           \[ 2 \pi_2^{S,tr}=  \pi_2^{S,tr}\circ {}^t \Gamma_{\wt{f}_b} \circ \pi_2^{M,tr}\circ \Gamma_{\wt{f}_b}\circ \pi_2^{S,tr}\ \ \hbox{in}\ A^2(\wt{S}_b\times \wt{S}_b)_{\QQ}\ .\]
           
           The fact that there is also a rational equivalence
           \[ 2 \pi_2^{M,tr}=  \pi_2^{M,tr}\circ {} \Gamma_{\wt{f}_b} \circ \pi_2^{S,tr}\circ {}^t \Gamma_{\wt{f}_b}\circ \pi_2^{M,tr}\ \ \hbox{in}\ A^2(\wt{\MM}_b\times \wt{\MM}_b)_{\QQ}\ \]
           is much easier: we have
           \[ 2\Delta_{\wt{\MM}_b} =  {} \Gamma_{\wt{f}_b} \circ {}^t \Gamma_{\wt{f}_b}\ \ \hbox{in}\ A^2(\wt{\MM}_b\times \wt{\MM}_b)_{\QQ}\ ,\]
           and the same argument applies.
           
           We have now established that
           \[ \Gamma_{\wt{f}_b}\colon\ \ t_2(\wt{S}_b)\ \to\ t_2(\wt{\MM}_b) \ \ \hbox{in}\ \MM_{\rm rat}\]
           is an isomorphism of motives, with inverse given by ${}^t \Gamma_{\wt{f}_b}$.      
                
           Since the surfaces $S$ and $P$ are birational to $\wt{S}_b$ resp. to $\wt{\MM}_b$, and $t_2$ is a birational invariant amongst smooth surfaces, it follows that there is also an isomorphism
           \[ t_2(S)\cong t_2(P)\ \ \hbox{in}\ \MM_{\rm rat}\ .\]
           
               \end{proof}

\begin{corollary}\label{finite} Let $S$ be a Todorov surface with $K^2_S=2$ and $\pi_1(S)=\ZZ/ 2\ZZ$. Assume moreover that one of the following holds:

\noindent
(\rom1) the Picard number $\rho(S)$ is at least $h^{1,1}(S)-1$;

\noindent
(\rom2) the $K3$ surface birational to $S/\iota$ is a Kummer surface;

\noindent
(\rom3) the $K3$ surface birational to $S/\iota$ has a Shioda--Inose structure (in the sense of \cite{Mor1}).

Then $S$ has finite--dimensional motive (in the sense of Kimura and O'Sullivan \cite{K}, \cite{An}).
\end{corollary}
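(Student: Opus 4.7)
The plan is to use Theorem \ref{main2} to transfer the finite--dimensionality question from $S$ to the associated $K3$ surface $P$, and then to verify that $P$ has finite--dimensional motive in each of the three cases. Since $q(S)=0$ forces $H^1(S,\QQ)=H^3(S,\QQ)=0$, the refined Chow--K\"unneth decomposition of Theorem \ref{trans} takes the form
   \[ h(S)\ \cong\ \mathbf{1}\ \oplus\ \mathbf{1}(-1)^{\oplus \rho(S)}\ \oplus\ t_2(S)\ \oplus\ \mathbf{1}(-2)\ \ \hbox{in}\ \MM_{\rm rat}\ , \]
and similarly for $P$ with $\rho(P)$ in place of $\rho(S)$. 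Every summand other than $t_2$ is finite--dimensional, so $S$ (respectively $P$) has finite--dimensional motive if and only if $t_2(S)$ (respectively $t_2(P)$) does. Theorem \ref{main2} supplies an isomorphism $t_2(S)\cong t_2(P)$, and so everything reduces to showing that the $K3$ surface $P$ has finite--dimensional motive under each of the three hypotheses.

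For cases (\rom2) and (\rom3) I would invoke standard results on motives of $K3$ surfaces. If $P$ is Kummer with underlying abelian surface $A$, then $t_2(P)$ is a direct summand of $h(A)$, which is finite--dimensional by Shermenev; hence $t_2(P)$, and therefore $P$, is finite--dimensional. If $P$ carries a Shioda--Inose structure, then by Morrison's construction there exist a Kummer $K3$ surface $K$ and a correspondence realising a Hodge isometry $T(P)_\QQ\cong T(K)_\QQ$; lifting this correspondence to Chow motives (as in Pedrini's work on motives of $K3$ surfaces) yields $t_2(P)\cong t_2(K)$, reducing case (\rom3) to case (\rom2).

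Case (\rom1) I would reduce to (\rom3) through a Picard--rank comparison. Noether's formula applied to $S$ (with $p_g=1$, $q=0$, $K_S^2=2$) gives $\chi_{\rm top}(S)=22$, hence $b_2(S)=20$ and $h^{1,1}(S)=18$; the hypothesis therefore reads $\rho(S)\geq 17$, equivalently $\mathrm{rank}\, T(S)\leq 3$. Because $\iota$ acts trivially on $H^{2,0}(S)$ (which is precisely the condition making $S/\iota$ birational to a $K3$), an orthogonality argument inside the $\iota$--eigenspace decomposition $H^2(S,\QQ)=H^2(S)^\iota\oplus H^2(S)^{-\iota}$ identifies $T(P)_\QQ$ with $T(S)^\iota_\QQ$, the exceptional divisors of the resolution $P\to S/\iota$ contributing only to the algebraic part of $H^2(P)$. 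Hence $\mathrm{rank}\, T(P)\leq 3$, so $\rho(P)\geq 19$, and then Morrison's theorem \cite{Mor1} guarantees that $P$ admits a Shioda--Inose structure, putting us in case (\rom3). The main obstacle I anticipate is precisely this transcendental--lattice bookkeeping along the chain $S\to S/\iota\leftarrow P$, which requires checking that $S/\iota$ having only rational singularities lets Hodge--theoretic data pass cleanly through the quotient and resolution; the remaining ingredients are routine invariant theory or direct citation.
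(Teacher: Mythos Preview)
Your overall strategy coincides with the paper's: reduce finite--dimensionality of $h(S)$ to that of $t_2(S)$, invoke Theorem \ref{main2} to pass to $t_2(P)$, and then check the $K3$ side case by case. Cases (\rom2) and (\rom3) are handled essentially as in the paper (the paper simply cites \cite[Remark 47]{thoughts} for (\rom3), whose content is exactly the reduction to the Kummer case via the Shioda--Inose correspondence that you sketch).

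The one place where your route diverges is case (\rom1). You carry out a separate Hodge--theoretic comparison of transcendental lattices along $S\to S/\iota\leftarrow P$, and you correctly flag the ``bookkeeping'' through the rational singularities as the delicate point. The paper avoids this entirely: the isomorphism $t_2(S)\cong t_2(P)$ of Theorem \ref{main2}, applied on cohomological realisations, already gives $H^2_{tr}(S)\cong H^2_{tr}(P)$ directly (cf.\ Theorem \ref{trans}), so $\dim H^2_{tr}(P)\le 3$ and hence $\rho(P)\ge 19$ is immediate, with no need to analyse the $\iota$--eigenspace decomposition or the quotient/resolution. From there the paper cites Pedrini \cite{P} rather than passing through (\rom3), though of course Pedrini's argument ultimately rests on Morrison's Shioda--Inose result as well. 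Your approach is not wrong, but it does extra work and introduces exactly the subtlety you were worried about; using the motivic isomorphism you already have in hand makes that obstacle disappear.
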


\begin{proof} It suffices to show that the motive $t_2(S)$ is finite--dimensional, hence (applying Theorem \ref{main2}) that the motive $t_2(P)$ is finite--dimensional, where $P$ is the $K3$ surface birational to $S/\iota$. In case (\rom1), this is true since
  \[ H^2_{tr}(P)\cong H^2_{tr}(S)\]
  has dimension $\le 3$, so that the Picard number $\rho(P)$ is $\ge 19$, and $K3$ surfaces with Picard number $\ge 19$ are known to have finite--dimensional motive \cite{P}. In case (\rom2), the needed statement is obviously true; in case (\rom3) it follows from \cite[Remark 47]{thoughts}.
  \end{proof}

 \begin{remark} It should be noted that Todorov surfaces as in Corollary \ref{finite} (\rom2) can be readily constructed; in fact, these were the first examples given by Todorov 
 \cite{Tod2}.
 \end{remark}

\begin{remark} We note in passing that surfaces $S$ as in Corollary \ref{finite} not only have finite--dimensional motive; their motive is actually in the subcategory of motives of abelian type (that is, the category of Chow motives generated by the motives of curves \cite{V3}). The same is true for $K3$ surfaces: all examples of $K3$ surfaces known to be finite--dimensional are actually of abelian type.

This is not surprising, for the following reason: for {\em any\/} surface $S$ with $p_g(S)=1$, the Kuga--Satake construction \cite{KS} relates $H^2(S)$ to the cohomology of an abelian variety. If the Kuga--Satake correspondence is algebraic (e.g., if the Hodge conjecture is true), this means that the homological motive of $S$ is direct factor of the motive of an abelian variety plus a sum of curves. If $S$ has finite--dimensional motive, the same is true for the Chow motive of $S$, i.e. $S$ has motive of abelian type.
\end{remark}

\begin{corollary} Let $S, S^\prime$ be two Todorov surfaces as in Corollary \ref{finite}. Assume there exists a Hodge isometry between the transcendental lattices
  \[ \phi\colon\ \ T_S\otimes\QQ\ \cong\ T_{S^\prime}\otimes\QQ \]
  (i.e., $\phi$ is an isomorphism of Hodge structures that respects the intersection forms). Then there is an isomorphism of motives
    \[ t_2(S)\ \cong\ t_2(S^\prime)\ \ \hbox{in}\ \MM_{\rm rat}\ .\]
    \end{corollary}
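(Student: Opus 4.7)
The plan is to reduce the assertion to the corresponding statement for the associated K3 surfaces. First, by Theorem \ref{main2}, we have isomorphisms
\[ t_2(S)\cong t_2(P)\quad\hbox{and}\quad t_2(S')\cong t_2(P')\quad\hbox{in}\ \MM_{\rm rat}, \]
where $P, P'$ denote the K3 surfaces associated to $S, S'$. These isomorphisms are induced by the degree $2$ correspondences $\Gamma_f, \Gamma_{f'}$ coming from the bicanonical double covers. At the level of cohomology, they induce isomorphisms of Hodge structures $T_S\otimes\QQ\cong T_P\otimes\QQ$ and $T_{S'}\otimes\QQ\cong T_{P'}\otimes\QQ$, which are Hodge isometries up to a rational scalar (the factor $2$ coming from the projection formula, irrelevant after working over $\QQ$). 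Composing these with the given $\phi$ yields a Hodge isometry
\[ \psi\colon\ T_P\otimes\QQ\ \cong\ T_{P'}\otimes\QQ. \]

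The second step is to upgrade $\psi$ to an isomorphism $t_2(P)\cong t_2(P')$ in $\MM_{\rm rat}$. Since $S$ and $S'$ satisfy the hypothesis of Corollary \ref{finite}, the K3 surfaces $P$ and $P'$ have finite--dimensional motive of abelian type (as noted in the remark following Corollary \ref{finite}). For such K3 surfaces, the Kuga--Satake correspondence is algebraic, which permits one to lift the Hodge isometry $\psi$ to an algebraic correspondence $\gamma\in A^2(P\times P')_{\QQ}$ inducing a homological isomorphism of transcendental motives. Finite--dimensionality of $t_2(P)$ and $t_2(P')$ then ensures that this homological isomorphism is in fact a rational--equivalence isomorphism: by the theorem of Kimura--O'Sullivan, a homologically trivial morphism between finite--dimensional motives is nilpotent, so a homological isomorphism of such motives is a genuine isomorphism in $\MM_{\rm rat}$.

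Combining the two steps, we obtain the desired chain
\[ t_2(S)\ \cong\ t_2(P)\ \cong\ t_2(P')\ \cong\ t_2(S')\quad\hbox{in}\ \MM_{\rm rat}. \]
The main obstacle is the lifting step: producing the algebraic correspondence $\gamma$ on $P\times P'$ realizing $\psi$. This is a Mukai--type statement, and must be verified in each of the three subcases of Corollary \ref{finite}. For $\rho(P)\ge 19$ one can appeal to Pedrini's analysis \cite{P} of K3 surfaces of high Picard number, where the motive splits off a finite--dimensional piece coming from curves and an abelian variety. For $P$ Kummer, one reduces directly to an abelian surface $A$, where the Hodge conjecture for codimension $2$ cycles on $A\times A'$ is known; transporting the Hodge isometry $\psi$ via the Kummer construction produces the required correspondence. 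In the Shioda--Inose case, the structure theorem of \cite{Mor1} furnishes an analogous reduction to an abelian surface, after which the same argument applies. In all three cases the algebraicity of the Kuga--Satake map is what makes the lifting unconditional.
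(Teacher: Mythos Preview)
Your overall architecture matches the paper's proof exactly: reduce to the associated $K3$ surfaces $P,P'$ via Theorem \ref{main2}, transport $\phi$ to a rational Hodge isometry $\psi\colon T_P\otimes\QQ\cong T_{P'}\otimes\QQ$, lift $\psi$ to an algebraic correspondence, and then invoke finite--dimensionality to pass from $\MM_{\rm hom}$ to $\MM_{\rm rat}$.

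The difference is entirely in the lifting step. The paper observes that in each of the three cases of Corollary \ref{finite} one has $\rho(P),\rho(P')\ge 17$ (case (\rom1) gives $\dim H^2_{tr}\le 3$ hence $\rho(P)\ge 19$; a Kummer surface has $\rho\ge 17$; a Shioda--Inose structure forces $T_P$ to be Hodge--isometric to $T_A$ for an abelian surface $A$, hence $\mathrm{rk}\,T_P\le 5$ and $\rho(P)\ge 17$). This single observation lets one cite Mukai \cite{Mu} directly: any rational Hodge isometry of transcendental lattices of such $K3$ surfaces is induced by an algebraic cycle. One line, no case analysis.

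Your route through Kuga--Satake and a three--case discussion is not wrong in spirit, but as written it has gaps. Algebraicity of the Kuga--Satake correspondence for $P$ and for $P'$ separately does not by itself make the Hodge class of $\psi$ on $P\times P'$ algebraic; you still need a Hodge--conjecture statement on the product (ultimately on a product of abelian varieties), and your assertion that ``the Hodge conjecture for codimension $2$ cycles on $A\times A'$ is known'' for arbitrary abelian surfaces $A,A'$ is not something you can quote without further argument or a precise reference. Likewise, invoking \cite{P} in case (\rom1) gives finite--dimensionality but not, on its own, the algebraicity of $\psi$. All of this is bypassed by noticing $\rho\ge 17$ and appealing to Mukai.
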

    
    \begin{proof} Let $P, P^\prime$ denote the associated $K3$ surfaces. Then $\phi$ induces a Hodge isometry
    \[  T_P\otimes\QQ\ \cong\ T_{P^\prime}\otimes\QQ\ \]
    (since in both cases, the intersection form is multiplied by $2$ when going to the double cover).
    By Mukai \cite{Mu}, this Hodge isometry is induced by a cycle $\Gamma\in A^2(P\times P^\prime)_{\QQ}$ (note that the assumptions of Corollary \ref{finite} imply $P$ and $P^\prime$ have Picard number $\ge 17$, so \cite{Mu} indeed applies). Then $\Gamma$ induces an isomorphism of homological motives
    \[ \Gamma\colon\ \ t_2(P)\ \cong\ t_2(P^\prime)\ \ \hbox{in}\ \MM_{\rm hom}\ ,\]
and hence    
    (using finite--dimensionality of $P$ and $P^\prime$) an isomorphism of Chow motives
    \[   \Gamma\colon\ \ t_2(P)\ \cong\ t_2(P^\prime)\ \ \hbox{in}\ \MM_{\rm rat}\ .\]
    The corollary now follows by combining with Theorem \ref{main2}.
    \end{proof}

Another corollary is that a weak form of the relative Bloch conjecture is true for surfaces as in Corollary \ref{finite}:

\begin{corollary} Let $S$ be a Todorov surface as in Corollary \ref{finite}. Let $\Gamma\in A^2(S\times S)_{\QQ}$ be a correspondence such that
  \[ \Gamma_\ast=\hbox{id}\colon\ \ H^{2,0}(S)\ \to\ H^{2,0}(S)\ .\]
  Then 
  \[  \Gamma_\ast \colon\ \ A^2_{hom}(S)_{\QQ}\ \to\ A^2_{hom}(S)_{\QQ} \]
  is an isomorphism.
  \end{corollary}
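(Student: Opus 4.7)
The plan is to pass to the transcendental motive $t_2(S)$ of Theorem~\ref{trans} and then combine a simple Hodge--theoretic observation with Kimura--O'Sullivan nilpotence. Set $\Gamma':=\pi_2^{tr}\circ\Gamma\circ\pi_2^{tr}\in A^2(S\times S)_{\QQ}$, regarded as an endomorphism of the motive $t_2(S)$. Since $q(S)=0$, Theorem~\ref{trans} gives
\[ A^2_{hom}(S)_{\QQ}=A^2_{AJ}(S)_{\QQ}=(\pi_2^{tr})_\ast A^\ast(S)_{\QQ}, \]
and the idempotent $\pi_2^{tr}$ acts as the identity on this subgroup. Consequently the action of $\Gamma$ on $A^2_{hom}(S)_{\QQ}$ agrees with that of $\Gamma'$, and it suffices to show that $\Gamma'$ is invertible in $\hbox{End}_{\MM_{\rm rat}}(t_2(S))$.

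The key intermediate step is to verify that $\Gamma'$ acts as the identity on $H^2_{tr}(S)$. Since $\Gamma$ is defined over $\QQ$, its cohomological action commutes with complex conjugation, so the hypothesis $\Gamma_\ast=\hbox{id}$ on $H^{2,0}(S)$ automatically extends to $\Gamma_\ast=\hbox{id}$ on $H^{0,2}(S)$. The difference
\[ \psi:=\Gamma'_\ast-\hbox{id}\colon\ H^2_{tr}(S)\ \to\ H^2_{tr}(S) \]
is therefore a morphism of $\QQ$--Hodge structures vanishing on the sub--Hodge structure $H^{2,0}\oplus H^{0,2}$. Strictness of morphisms of Hodge structures forces $\hbox{im}(\psi)$ to be a sub--Hodge structure of $H^2_{tr}(S)$ concentrated in bidegree $(1,1)$; by the Lefschetz $(1,1)$--theorem every rational $(1,1)$--class in $H^2(S)$ is algebraic, hence lies in $NS(S)_{\QQ}$, whereas $H^2_{tr}(S)$ is by definition orthogonal to $NS(S)_{\QQ}$. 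It follows that $\hbox{im}(\psi)=0$ and hence $\psi=0$.

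The conclusion is reached via Kimura--O'Sullivan nilpotence. By Corollary~\ref{finite}, $S$ has finite--dimensional motive, so its summand $t_2(S)$ is also finite--dimensional. The endomorphism $\Gamma'-\pi_2^{tr}\in\hbox{End}(t_2(S))$ is homologically trivial (it acts as $\psi=0$ on $H^2_{tr}(S)$, while the outer $\pi_2^{tr}$ already kills every other component of cohomology), hence nilpotent. Therefore $\Gamma'=\pi_2^{tr}+N$ with $N$ nilpotent, so $\Gamma'$ is a unit in $\hbox{End}(t_2(S))$, and the induced map on $A^\ast(t_2(S))=A^2_{hom}(S)_{\QQ}$ is an isomorphism. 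I expect the only real obstacle to be the middle Hodge--theoretic vanishing; once that is in place, the rest of the argument is formal and relies only on the motivic machinery of Theorem~\ref{trans} and on finite--dimensionality provided by Corollary~\ref{finite}.
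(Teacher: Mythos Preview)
Your argument is correct and is precisely the standard unpacking of what the paper states in a single line (``As is well--known, this holds for any surface $S$ with finite--dimensional motive''): pass to $t_2(S)$, use a Hodge--theoretic argument to see the correspondence acts as the identity on $H^2_{tr}$, and invoke Kimura nilpotence to upgrade this to an isomorphism of Chow groups. The only cosmetic point is that $H^{2,0}\oplus H^{0,2}$ is not a rational sub--Hodge structure, but your actual argument (that $\hbox{im}(\psi)$ is a rational sub--Hodge structure of pure type $(1,1)$, hence algebraic, hence zero in $H^2_{tr}$) does not need this and goes through as written.
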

  
  \begin{proof} As is well--known, this holds for any surface $S$ with finite--dimensional motive.
  \end{proof}

\section{Speculation}

This final section offers some speculation about possible directions of generalization of the results in this note.

\begin{remark} It would be interesting to try and prove Corollary \ref{true} for all surfaces $S$ with $p_g=1$, $K_S^2=2$ and $\pi_1(S)=\ZZ/2\ZZ$. Thanks to Catanese--Debarre \cite{CD}, (canonical models of) these surfaces form a $16$--dimensional family, explicitly described as quotients of complete intersections in $\PP(1,1,1,2,2)$. 

The problem is that outside of the $12$--dimensional Todorov locus (where there is a $K3$ surface over which $S$ is a double cover), it seems difficult to exploit this fact. An argument such as sketched by Voisin for quartic surfaces \cite[Theorem 3.10]{V0} would perhaps work to establish Corollary \ref{true} for this $16$--dimensional family, but this argument is conditional on (1) knowing the generalized Hodge conjecture for 
  \[\wedge^2 H^2(S)\subset H^4(S\times S)\ ,\] 
  and (2) knowing the ``Voisin standard conjecture'' \cite[Conjecture 0.6 ]{V0}, \cite[Conjecture 2.29]{Vo} is true (to obtain a cycle supported on a certain subvariety).
  \end{remark}

\begin{remark}
On the other hand, it would also be interesting to prove Corollary \ref{true} for the $9$ other families of Todorov surfaces. Thanks to the work of Rito (cf. \cite{Rito}, or Theorem \ref{rito}), these have an associated $K3$ surface for which Voisin's conjecture is known. 

The problem is in relating $0$--cycles on $S$ to $0$--cycles on the associated $K3$ surface (that is, in proving Theorem \ref{main}). What is needed at the very least, in order for the ``spreading out'' approach of \cite{V0}, \cite{V1} to work, is that the irreducible family
$\Ss\to B$ of Todorov surfaces with given fundamental invariants is nice enough to have the property that
  \[ A^2_{hom}(\Ss\times_B \Ss)_{\QQ}=0\ .\]
 However, in the absence of an explicit description of the family (such as given by the weighted complete intersections of \cite{CD} in case $K_S^2=2$ and $\pi_1(S)=\ZZ/2\ZZ$), this seems difficult. Can this property perhaps be proven for the total space of the deformation of \cite{LP} mentioned in Remark \ref{remtod} ?
\end{remark}

\vskip0.8cm

\begin{nonumberingt} Thanks to Claire Voisin for having written the invaluable and inspirational monograph \cite{Vo}. Thanks to the referee for helpful suggestions, that significantly improved the exposition in several places.
Many thanks (as always) to Yasuyo, Kai and Len, who provide wonderful working conditions in Schiltigheim.
\end{nonumberingt}

\vskip 1cm


\begin{thebibliography}{dlPG99}

\bibitem{An} Y. Andr\'e, Motifs de dimension finie (d'apr\`es S.-I. Kimura, P. O'Sullivan,...), S\'eminaire Bourbaki 2003/2004, Ast\'erisque 299 Exp. No. 929, viii, 115---145,

\bibitem{BCP} I. Bauer, F. Catanese and R. Pignatelli, Complex surfaces of general type: Some recent progress, in: Global aspects of complex geometry (F. Catanese et al., eds.), Springer--Verlag Berlin Heidelberg 2006,

\bibitem{BPV} W. Barth, K. Hulek, C. Peters and A. van de Ven, Compact complex surfaces, Springer Verlag Berlin Heidelberg New York 2004, 

\bibitem{B} S. Bloch, Lectures on algebraic cycles, Duke Univ. Press Durham 1980,

\bibitem{B2} S. Bloch, Algebraic cycles and higher K--theory, Advances in Math. vol. 61 (1986), 267---304,

\bibitem{B3} S. Bloch, The moving lemma for higher Chow groups, J. Alg. Geom. 3 (1994), 537---568,


\bibitem{BS} S. Bloch and V. Srinivas, Remarks on correspondences and algebraic cycles, American Journal of Mathematics Vol. 105, No 5 (1983), 1235---1253,

\bibitem{BV} A. Beauville and C. Voisin, On the Chow ring of a $K3$ surface, J. Alg. Geom. 13 no. 3 (2004), 417---426,

\bibitem{Cat} F. Catanese, Surfaces with $K^2=p_g=1$ and their period mapping, in:  Algebraic geometry (Copenhagen, 1978), Springer Lecture Notes in Mathematics, Springer 1979,

\bibitem{CD} F. Catanese and O. Debarre, Surfaces with $K^2=2$, $p_g=1$, $q=0$, J. reine u. angew. Math. 395 (1989), 1---55,

\bibitem{CH} A. Corti and M. Hanamura, Motivic decomposition and intersection Chow groups, I, Duke
Math. J. 103 (2000), 459---522,

\bibitem{Del} C. Delorme, Espaces projectifs anisotropes, Bull. Soc. Math. France 103 (1975), 203---223,

\bibitem{DM} C. Deninger \and J. Murre, Motivic decomposition of abelian schemes and the Fourier transform. J. reine u.
angew. Math. 422 (1991), 201---219,

\bibitem{Dol} I. Dolgachev, Weighted projective varieties, in: Group actions and vector fields, Vancouver 1981, Springer Lecture Notes in Mathematics 956, Springer Berlin Heidelberg New York 1982,

\bibitem{F} W. Fulton, Intersection theory, Springer--Verlag Ergebnisse der Mathematik, Berlin Heidelberg New York Tokyo 1984,

\bibitem{FMSS} W. Fulton, R. MacPherson, F. Sottile, and B. Sturmfels, Intersection theory on spherical
varieties, J. Alg. Geom. 4 (1995), 181---193,

\bibitem{GHM} B. Gordon, M. Hanamura and J. Murre, Relative Chow-K\"unneth projectors for modular
varieties, J. reine u. angew. Math. 558 (2003), 1---14,



\bibitem{J2} U. Jannsen, Motivic sheaves and filtrations on Chow groups, in: Motives (U. Jannsen et alii, eds.), Proceedings of Symposia in Pure Mathematics Vol. 55 (1994), Part 1,  

\bibitem{KMP} B. Kahn, J. P. Murre and C. Pedrini, On the transcendental part of the motive of a surface, in: Algebraic cycles and motives. Vol. 2, volume 344 of London Math. Soc. Lecture Note Ser., 143---202, Cambridge Univ. Press, Cambridge 2007,

\bibitem{K} S. Kimura, Chow groups are finite dimensional, in some sense,
Math. Ann. 331 (2005), 173---201,



\bibitem{Kov} S. Kov\'acs, Rational, log canonical, Du Bois singularities: on the conjectures of Koll\'ar and Steenbrink, Comp. Math. 118 (1999), 123---133,

\bibitem{KS} M. Kuga and I. Satake, Abelian varieties attached to polarized $K3$ surfaces, Math. Ann. 169 (1967), 239---242,


\bibitem{Kunev} V. Kunev, An example of a simply--connected surface of general type for which the local Torelli theorem does not hold, C. R. Ac. Bulg. Sc. 30 (1977), 323---325,





\bibitem{thoughts} R. Laterveer, Some results on a conjecture of Voisin for surfaces of geometric genus one, to appear in Boll. Unione Mat. Italiano,


\bibitem{LP} Y. Lee and F. Polizzi, Deformations of product--quotient surfaces and reconstruction of Todorov surfaces via $\QQ$--Gorenstein smoothing, in: Proceeding of Algebraic Geometry in East Asia (Taipei 2011), Advanced Studies in Pure Mathematics 65, 2015,

\bibitem{Lev} M. Levine, Techniques of localization in the theory of algebraic cycles, J. Alg. Geom. 10
(2001), 299---363,

\bibitem{Mor} D. Morrison, Algebraic cycles on products of surfaces, in: Proceedings, Algebraic Geometry
Symposium, Tohoku University, 1984,

\bibitem{Mor1} D. Morrison, On $K3$ surfaces with large Picard number, Invent. Math. 75 No 1 (1984), 105---121,

\bibitem{Mor2} D. Morrison, Isogenies between algebraic surfaces with geometric genus one, Tokyo J. Math. Vol 10 no 1 (1987), 179---187,

\bibitem{Mor3} D. Morrison, On the moduli of Todorov surfaces, in: Algebraic Geometry and Commutative Algebra in Honor of Masayoshi Nagata (H. Hijikata et al., eds.), vol. 1, Kinokuniya, Tokyo 1988,

\bibitem{Mu} S. Mukai, On the moduli space of bundles on a $K3$ surface I, in: Vector bundles on algebraic varieties (Bombay, 1984), TATA Inst. Fund. Res. Stud. Math. 11, Bombay 1987, 

\bibitem{M} D. Mumford, Rational equivalence of $0$--cycles on surfaces, J. Math. Kyoto Univ. Vol. 9 No 2 (1969), 195---204,


\bibitem{MNP} J. Murre, J. Nagel and C. Peters, Lectures on the theory of pure motives, Amer. Math. Soc. University Lecture Series 61, Providence 2013,

\bibitem{NS} J. Nagel and M. Saito, Relative Chow-K\"unneth decompositions for conic bundles and Prym varieties, Int. Math. Res. Not. 2009, no. 16, 2978---3001,

\bibitem{P} C. Pedrini, On the finite dimensionality of a $K3$ surface, Manuscripta Mathematica 138 (2012), 59---72,

\bibitem{PS} C. Peters and J. Steenbrink, Mixed Hodge structures, Springer--Verlag Ergebnisse der Mathematik, Berlin Heidelberg New York 2008,

\bibitem{Rito} C. Rito, A note on Todorov surfaces, Osaka J. Math. 46 no. 3 (2009), 685---693,

\bibitem{R} A.A. Rojtman, The torsion of the group of 0--cycles modulo rational equivalence, Annals of Mathematics 111 (1980), 553---569,

\bibitem{Tod} A. Todorov, Surfaces of general type with $p_g= 1$ and $(K,K) = 1$, Ann. Sci. de l'Ecole Normale Sup. 13 (1980), 1---21,

\bibitem{Tod2} A. Todorov, A construction of surfaces with $p_g=1$, $q=0$ and $2\le (K^2)\le 8$: counterexamples of the global Torelli theorem, Inv. Math. 63 (1981),
287---304,

\bibitem{T} B. Totaro, Chow groups, Chow cohomology, and linear varieties, Forum of Mathematics, Sigma (2014), vol. 1, e1,

\bibitem{T2} B. Totaro, The motive of a classifying space, to appear in Geometry and Topology,

\bibitem{U} S. Usui, Mixed Torelli problem for Todorov surfaces, Osaka J. Math. 28 no. 3 (1991), 697---735,

\bibitem{U2} S. Usui, Torelli--type problems, Sugaku Expositions 13 (1) (2000), 17---38,




\bibitem{V3} C. Vial, Remarks on motives of abelian type, arxiv:1112.1080v2, to appear in Tohoku Math. J.,


\bibitem{V9} C. Voisin, Remarks on zero--cycles on self--products of varieties, in: Moduli of vector bundles, Proceedings of the Taniguchi Congress  (M. Maruyama,  ed.), Marcel Dekker New York Basel Hong Kong 1994,


\bibitem{V11} C. Voisin, Symplectic involutions of $K3$ surfaces act trivially on $CH_0$, Documenta Math. 17 (2012), 851---860,

\bibitem{V0} C. Voisin, The generalized Hodge and Bloch conjectures are equivalent for general complete intersections, Ann. Sci. Ecole Norm. Sup. 46, fascicule 3 (2013), 449---475,

\bibitem{V1} C. Voisin, The generalized Hodge and Bloch conjectures are equivalent for general complete intersections, II, J. Math. Sci. Univ. Tokyo  22 (2015), 491---517,

\bibitem{V8} C. Voisin, Bloch's conjecture for Catanese and Barlow surfaces, J. Differential Geometry 97 (2014), 149---175,

\bibitem{Vo} C. Voisin, Chow Rings, Decomposition of the Diagonal, and the Topology of Families, Princeton University Press, Princeton and Oxford, 2014.

\end{thebibliography}
\end{document}